\documentclass[11pt, reqno]{amsart}
\usepackage{amsfonts,amssymb}
\usepackage{amsmath,amscd}
 \usepackage[all]{xy}
 \usepackage{color}   
\usepackage{hyperref}
\usepackage{graphicx}
\usepackage{multicol}
\usepackage{tikz}
\oddsidemargin .2in
\evensidemargin .2in
\textwidth 6in

\usetikzlibrary{decorations.markings, arrows.meta}
\usetikzlibrary{arrows, decorations.markings}
\usetikzlibrary{arrows}
\usetikzlibrary{shapes,snakes}
\usetikzlibrary{arrows.meta}
\usetikzlibrary{decorations.pathreplacing,angles,quotes}
\tikzset{bullet/.style={draw,ellipse, text width = 4cm, text centered}}
\tikzset{rec/.style={draw, text width = 3.5 cm, text centered}}
\tikzset{plain/.style={->,>=stealth}}
\tikzset{
  barbarrow/.style={ 
     >={Straight Barb[left,length=5pt, width=5pt]}
  },
  strike through/.style={
    postaction=decorate,
    decoration={
      markings,
      mark=at position 0.5 with {
        \draw[-] (-3pt,-3pt)  --  (3pt, 3pt);
      }
    }
  }
}


\newlength{\bibitemsep}\setlength{\bibitemsep}{.2\baselineskip plus .05\baselineskip minus .05\baselineskip}
\newlength{\bibparskip}\setlength{\bibparskip}{1pt}
\let\oldthebibliography\thebibliography
\renewcommand\thebibliography[1]{%
  \oldthebibliography{#1}%
  \setlength{\parskip}{\bibitemsep}%
  \setlength{\itemsep}{\bibparskip}%
}
\newcommand{\bb}{\mathbb}

\newcommand{\cal}{\mathcal}

\newcommand{\bfk}{\mathbf{k}}
\newcommand{\cc}{\ensuremath{\mathbb{C}}}
\newcommand{\on}[1]{\operatorname{#1}}
\newcommand{\delete}[1]{{}}
\newcommand{\Mod}{\on{\!-Mod}}

\newcommand{\myhookrightarrow}{\raisebox{2pt}{ \, \begin{tikzpicture}
  \draw [right hook-latex] (0, 0) -- (0.7, 0);     
\end{tikzpicture} \, }}


\theoremstyle{plain}
\newtheorem{theorem}{Theorem}[section]
\newtheorem{lemma}[theorem]{Lemma}
\newtheorem{proposition}[theorem]{Proposition}
\newtheorem{corollary}[theorem]{Corollary}

\theoremstyle{definition}
\newtheorem{remark}[theorem]{Remark}
\newtheorem{definition}[theorem]{Definition}
\newtheorem{remarks}[theorem]{Remarks}
\newtheorem{example}[theorem]{Example}
\newtheorem{examples}[theorem]{Examples}

\delete{
\theoremstyle{plain}
\newtheorem{theorem}[subsection]{Theorem}
\newtheorem{lemma}[subsection]{Lemma}
\newtheorem{proposition}[subsection]{Proposition}
\newtheorem{corollary}[subsection]{Corollary}

\theoremstyle{definition}
\newtheorem{remark}[subsection]{Remark}
\newtheorem{definition}[subsection]{Definition}
\newtheorem{remarks}[subsection]{Remarks}
}

\newtheoremstyle{dotless}{}{}{\itshape}{}{\bfseries}{}{ }{}
\theoremstyle{dotless}
\newtheorem*{theorem*}{Theorem.}
\newtheorem*{lemma*}{Lemma}
\newtheorem*{proposition*}{Proposition}
\newtheorem*{definition*}{Definition}

\setcounter{secnumdepth}{3}
\setcounter{tocdepth}{2}

\title[Cohomological varieties associated to vertex operator algebras]{Cohomological varieties associated to vertex operator algebras}
\author[A. CARADOT, C. JIANG]{Antoine CARADOT$^1$, Cuipo JIANG$^1$}
\address{$^1$School of Mathematical Sciences, Shanghai Jiao Tong University, Shanghai, 200240, China}
\author[Z. LIN]{Zongzhu LIN$^2$}
\address{$^2$Department of Mathematics, Kansas State University, Manhattan, KS, 66506, USA}
\date{\today}                     
\keywords{Vertex algebras, $C_2$-algebras, cohomology, cohomological varieties}
\thanks{2020 {\it Mathematics Subject Classification:}
Primary: 17B69, 13D03. Secondary: 13D02, 13D07 \\
 C. Jiang is supported by NSFC grant No. 12171312}

\begin{document}
\maketitle
\vspace{-\topsep}
\begin{abstract}
Given a vertex operator algebra $V$, one can attach a graded Poisson algebra called the $C_2$-algebra $R(V)$. The associate Poisson scheme provides an important invariant for $V$ and has been studied by Arakawa as the associated variety.  In this article, we define and examine the cohomological variety of a vertex algebra, a notion cohomologically dual to that of the associated variety, which measures the smoothness of the associated scheme at the vertex point.  We study its basic properties and then construct a closed subvariety of the cohomological variety for rational affine vertex operator algebras  constructed from  finite dimensional simple Lie algebras. We also determine the cohomological varieties of the simple Virasoro vertex operator algebras. These examples indicate that, although the associated variety for a rational $C_2$-cofinite vertex operator algebra is always a simple point, the cohomological variety can have as large a dimension as possible. In this paper, we study $R(V)$ as a commutative algebra only and do not use the property of its Poisson structure, which is expected to provide more refined invariants. The goal of this work is to study the cohomological supports of modules for vertex algebras as the cohomological support varieties for finite groups and restricted Lie algebras. 
\end{abstract}


\section{Introduction}
\delete{ The theory of vertex algebras originated from string theory, more particularly within conformal field theory, in an attempt to characterise particle interactions. Its mathematical formalism was developed alongside the classification of finite simple groups. This vast project divided the finite simple groups in several infinite families, as well as 26 special groups called sporadic and not belonging to any of said families. The biggest of these groups, called the Monster, was conjectured to possess very particular properties. In order to understand the Moonshine conjectures, a module for the Monster group called the moonshine module $V^\sharp$ was constructed (cf. \cite{Frenkel-Lepowsky-Meurman}). It was then noticed that there exists an algebra of "vertex operators" acting on $V^\sharp$ whose automorphism group is the Monster group itself. The space $V^\sharp$ together with this algebra of operators is a prime example of a vertex operator algebra. We refer the reader to the introduction of \cite{Lepowsky-Li} for details and references regarding the emergence of the theory of vertex algebras.

The theory of vertex (operator) algebras has then developed at a fast pace. The notion of a module for a vertex algebra was defined, and the subsequent representation theory was further explored. The algebraic structures that emerged created new deep results as well as new insight into the representation theory of known structures, like the Virasoro algebra or the affine Kac-Moody algebras (see \cite{Frenkel-Zhu, Feigin-Fuchs, Wang}).
}

Given a vertex operator algebra $V$, Zhu (\cite{Zhu}) constructed an associated algebra $A(V)$ and a Poisson algebra $R(V)=V/C_2(V)$. The irreducible representations of $A(V)$ parametrize the irreducible representations of $V$. The Poisson algebra $R(V)$ attaches an algebraic Poisson variety which is called the associated variety $X_V$ (see \cite{Arakawa1, Arakawa2}). When $V$ is  of CFT type  and strongly finitely generated, $R(V)$ is a finitely generated algebra and thus  the associated algebraic variety is always a conical Poisson variety. It is also known that $V$ is $C_2$-cofinite if and only if $ X_V$ is a single point.  In this paper, we attach another algebraic geometric object called cohomological variety using the cohomology of the algebra $R(V)$ at the vertex point.  In case $V$ is  $C_2$-cofinite of CFT type, despite the associated variety $X_V$ always being  a single point,  the cohomological variety can be quite different with large dimensions. Thus it should be an important invariant of the vertex operator algebra $V$.   We will construct a closed subvariety  of the cohomological varieties for all rational affine vertex operator algebras. This closed subvariety is an affine space of dimension being a polynomial of the positive integer level $k$. For a simple Virasoro vertex operator algebra $L_{Vir}(c,0)$, the cohomological variety is  a single point $\{0\}$ (for which the associated variety is $\mathbb A^1$)  except  when $ c=c_{p, q}$, for which, the cohomological variety is $ \mathbb A^1$ while the associated variety is $\{0\}$.  We will treat the lattice vertex operator algebra cases in a later paper. In that case, we expect $X_V$ and $X^!_V$ to be closely related to Koszul duality of quadratic algebras.

A vertex algebra, by definition, is an infinite dimensional vector space with an additional structure given by intricate axioms (cf. \cite{Borcherds, Frenkel-Lepowsky-Meurman, Lepowsky-Li}). The study of the representation theory of a vertex algebra often relies on related objects that are more familiar, with more tools available. In his thesis \cite{Zhu}, Zhu studied the representation theory of vertex operator algebras by associating to them several associative algebras, which have played prominent roles ever since.   The Zhu algebra $A(V)$ of a vertex algebra $V$, which is defined as a quotient of $V$ by a particular subspace, is an associative algebra that captures the irreducible representations as he has shown that there is a bijection between the set of equivalence classes of irreducible admissible $V$-modules and the set of equivalence classes of irreducible $A(V)$-modules.  When $V$ is rational, meaning that the category of admissible $V$-modules is semisimple, then $A(V)$ is finite dimensional and semisimple (cf. \cite{Zhu, Dong-Li-Mason1}).  This result practically establishes an equivalence of abelian categories between the category of admissible representations of $V$ and the category of $A(V)$-modules.  However, the representation category of a general vertex operator algebra has additional structure such as fusion rules. The  higher level Zhu algebras $A_n(V)$ in \cite{Dong-Li-Mason2} and their bimodules (\cite{Dong-Jiang}) can be used  to capture the fusion structure. 

 Another associative algebra used to study a vertex algebra was also constructed by  Zhu (\cite{Zhu}). It is the $C_2$-algebra $R(V)$, defined as $V/C_2(V)$ where $C_2(V ) = \text{Span}\{a_{-2}b \ | \ a, b \in V \}$. This space has a commutative Poisson algebra structure. When $R(V)$ is finite dimensional ($V$ is said to be $C_2$-cofinite),  $A(V)$ is also finite dimensional. Thus one expects that the pair of associative algebras $ (R(V), A(V))$ should capture more information on $V$ as well as representations of $V$, and the machinery of representation theory of finite dimensional algebras can be applied to that of $V$.  
 
\delete{A basis of the associative algebra $R(V)$ gives a finite generating set for the vertex algebra (cf. \cite{Gaberdiel-Neitzke}). A similar result giving a finite generating set for a weak $V$-module has also been obtained by Buhl in \cite{Buhl}. }

Corresponding to the Poisson algebra $R(V)$, under some finite generation assumption on $V$, $X_V=\on{Max}(R(V))$ is an affine Poisson variety which is called the associated variety and have been studied in \cite{Arakawa1}. This variety is zero dimensional if and only if the vertex algebra is $C_2$-cofinite. Another property of the associated variety is that if $V$ is the simple affine vertex operator algebra $L_{\hat{\mathfrak{g}}}(k,0)$ with $k$ admissible, then $X_V$ is the closure of a nilpotent orbit of the Lie algebra $\mathfrak{g}$ (cf. \cite{Arakawa2}). As pointed out in \cite{Feigin-Gukov}, this variety should have physical interpretations. 

In this article, we study a notion dual to that of the associated variety, namely the cohomological variety of a vertex operator algebra. Cohomological support varieties for modules over group algebras of finite groups in positive characteristic case have been studied extensively (see  \cite{Quillen,Carlson1,Carlson2} for example)  using the maximal ideal spectrum of the group cohomology ring. This theory has also be studied  for finite group schemes (\cite{Suslin-Friedlander-Bendel, Friedlander-Pevtsova}), certain finite super group schemes (\cite{Drupieski-Kujawa19, Drupieski-Kujawa21}), and even finite tensor categories (\cite{Bergh-Plavnik-Witherspoon}).  A theory of support varieties for arbitrary finite dimensional algebras was then developed using Hochschild cohomology rings (cf. \cite{Snashall-Solberg} and \cite{Solberg}). In what follows, we define the cohomological variety $X_{V, x}^{!}$ of an $\mathbb{N}$-graded vertex operator algebra $V$ at the point $\mathfrak{m}_x \in \text{Spec}(R(V)_0)$ as the maximal spectrum of a commutative algebra constructed from the Ext algebra $\on{Ext}^*_{R(V)}(\cc_x, \cc_x)$ of the one dimensional  $R(V)$-module $\cc_x \cong R(V)_0/\mathfrak{m}_x$ with $R(V)_0=R(V)/R(V)_+$. Similar to computing group cohomology, this algebra $\on{Ext}^*_{R(V)}(\cc_x, \cc_x)$ is in general quite complicated, and its direct computation is rarely fruitful unless the algebra $R(V)$ at the point $ x$ is reasonable, such as complete intersection or Gorenstein. The computations of the Ext algebras for both $A(V)$ and $ R(V)$ are carried out in \cite{Caradot-Jiang-Lin_1} for the $C_2$-cofinite triplet vertex operator algebra. It is still not clear that the Ext algebra is finitely generated.  However, the cohomological variety contains information on the Ext algebra $\on{Ext}^*_{R(V)}(\cc_x, \cc_x)$, and its study gives insight on the cohomological behaviour of $R(V)$.  Another motivation for defining the cohomological variety is to study the cohomological support variety of a representation, similar to those in \cite{Carlson1, Suslin-Friedlander-Bendel, Friedlander-Pevtsova, Friedlander-Parshall, Snashall-Solberg}. 

In Section~\ref{SectionDef} we recall the definitions of the $C_2$-algebra $R(V)$ and its associated variety $X_V$. We then define the cohomological scheme $\widetilde{X}_{V, x}^{!}$ and the cohomological variety $X_{V, x}^{!}$, and explain the duality between them and the associated scheme and associated variety. Unlike in the Hopf algebra case (such as group algebras in \cite{Carlson1, Carlson2} or restricted universal enveloping algebras in \cite{Friedlander-Parshall}) or in the Hochschild cohomology case (as in \cite{Snashall-Solberg}), the Ext algebra is not graded commutative in general. However, it contains a maximal commutative quotient which will define the cohomological scheme $ \widetilde{X}^!_{V, x}$. It should be noted that there is another way to attach a commutative graded algebra in representation theory from a non-commutative algebra with an almost commutative filtered structure as in $D$-module theory. But we take the maximal commutative quotient since the commutativity of the Ext algebras for commutative algebras fails only at some squares  of odd degree elements. This approach does not apply to $D$-module theory as the Weyl algebra does not have any finite dimensional module over fields of characteristic zero. These properties for graded commutative algebras will be studied in detail in \cite{Caradot-Jiang-Lin_2}.  In Section~\ref{SectionFunctor}, we study the functors associating $\widetilde{X}_V$, $X_V$ and $\widetilde{X}_{V, x}^{!}$ to $V$, as well as their behaviour with respect to direct sums and tensor products. Section~\ref{Sectiondimcomology} is devoted to the study of the Ext algebra $\on{Ext}^{*}_R(\cc,\cc)$ of the trivial module of an augmented commutative algebra $\epsilon: R\to \cc $, as well as determining a particular commutative quotient of the Ext algebra $\on{Ext}^{*}_R(\cc,\cc)$.  Under some assumptions on the ring, we explicitly obtain the variety $\text{Max}(\on{Ext}^{*}_R(\cc,\cc)/I)$ for a well chosen ideal $I$.  Using Tate's construction of a resolution, one can get a minimal free resolution of $ \cc $ as an $R$-module. If the algebra $ R$ is graded with presentation as a quotient of polynomial algebra, with  a minimal set of homogeneous generators, we can find a complete intersection $ \widetilde R$ such that $ R$ is quotient. Under the assumption on the degrees of the generators being at least three, we can show that the Ext algebra $\on{Ext}^{*}_R(\cc,\cc)$ has a quotient which is a polynomial algebra associated to the Ext algebra of $ \widetilde R$. This result is independent of vertex operator algebras and will be applied to $R=R(V)$ for rational affine vertex operator algebras $V$.

In Section~\ref{sec:Vir}, we determine explicitly the cohomological variety of the simple Virasoro vertex operator algebra, which is fairly easy since $R(V)$ is isomorphic to $ \cc[x]/\langle x^r\rangle $ for some $ r$. Finally, in Section~\ref{LowerboundforLiealgebras}, we obtain a lower bound on the dimension of the cohomological variety of a simple affine vertex operator algebra $L_{\hat{\mathfrak{g}}}(k,0)$ when $k$ is a nonnegative integer and $\mathfrak{g}$ is a simple finite dimensional Lie algebra. In this case, $R(L_{\hat{\mathfrak g}}(k, 0))=\on{Sym}(\mathfrak g)/\langle L((k+1)\theta)\rangle $ where $L((k+1)\theta)$ is the unique irreducible $\mathfrak g$-submodule in $ \on{Sym}^{k+1}(\mathfrak{g})$. One of the main results of this paper is:

\begin{theorem*}
Let $\mathfrak{g}$ be a finite dimensional simple Lie algebra. If $k$ is a positive integer larger or equal to $2$, then the cohomological scheme of $L_{\hat{\mathfrak{g}}}(k,0)$ contains an affine space $\cc^{N_k}$ as a closed subscheme, with
\begin{equation*}
N_k=\prod_{\alpha \in \Phi^+}((k+1)\frac{( \theta, \alpha )}{( \rho, \alpha )}+1).
\end{equation*}
\end{theorem*}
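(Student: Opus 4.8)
The plan is to realize the statement as an application of the structural result of Section~\ref{Sectiondimcomology} on Ext algebras of graded augmented algebras whose defining relations sit in degree at least three, the only genuinely arithmetic input being the Weyl dimension formula.

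First I would record the presentation $R:=R(L_{\hat{\mathfrak g}}(k,0))=\on{Sym}(\mathfrak g)/I$ with $I=\langle L((k+1)\theta)\rangle$ and analyse its minimal generators. Since $L((k+1)\theta)\subseteq \on{Sym}^{k+1}(\mathfrak g)$ and $I$ is the ideal it generates, we have $I_j=0$ for $j<k+1$ and $I_{k+1}=L((k+1)\theta)$. Graded Nakayama then identifies the degree-$(k+1)$ part of $I/\mathfrak m I$ with $L((k+1)\theta)$; because $I$ is generated by $L((k+1)\theta)$ there are no minimal generators in lower or higher degrees, so $I$ is minimally generated by the $\dim L((k+1)\theta)$ homogeneous elements spanning $L((k+1)\theta)$, each of internal degree $k+1$. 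As $k\geq 2$ these degrees are all at least three, so the hypotheses of the Section~\ref{Sectiondimcomology} result are satisfied, with $\mu(I)=\dim L((k+1)\theta)$ the number of minimal relations.

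Next I would invoke that result. The mechanism is a bigrading argument: the relations carrying internal degree $k+1>2$ means there are no quadratic relations, so in the Tate resolution of $\cc$ the degree-one classes (internal degree $1$) are abelian and their products and squares land in internal degree $\leq 2$, whereas the $\mu(I)$ degree-two classes dual to the relations lie in internal degree $k+1\geq 3$. These internal degrees are disjoint, so none of the relation classes is consumed by products of degree-one classes, and the odd-square subtlety noted in the introduction is harmless since odd squares sit in internal degree $2\neq k+1$. Consequently the $\mu(I)$ relation classes survive as algebraically independent even-degree elements of the maximal commutative quotient $B$ of $\on{Ext}^{*}_{R}(\cc,\cc)$, and the internal-degree projection of $B$ onto the polynomial subalgebra they generate is an algebra surjection $B\twoheadrightarrow \cc[y_1,\dots,y_{\mu(I)}]$. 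Dually this is the asserted closed embedding $\cc^{\mu(I)}\hookrightarrow \widetilde{X}_{V,x}^{!}$ at the vertex point. It then remains to evaluate $\mu(I)=\dim L((k+1)\theta)$ by the Weyl dimension formula
\[
\dim L(\lambda)=\prod_{\alpha\in\Phi^{+}}\frac{(\lambda+\rho,\alpha)}{(\rho,\alpha)},
\]
which at $\lambda=(k+1)\theta$ gives $\prod_{\alpha\in\Phi^{+}}\bigl((k+1)\tfrac{(\theta,\alpha)}{(\rho,\alpha)}+1\bigr)=N_k$, exactly the claimed dimension.

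The main obstacle is the step that is supplied by Section~\ref{Sectiondimcomology}, namely guaranteeing that the $N_k$ relation classes remain \emph{algebraically independent} in the commutative quotient, so that the polynomial quotient attains the full rank $\mu(I)$ rather than collapsing. This is precisely where the degree-at-least-three hypothesis and the disjointness of the internal degrees $2$ and $k+1$ are indispensable, and it is the reason the theorem is stated for $k\geq 2$; for this affine family one only has to verify the hypotheses (minimal generation in degree $k+1\geq 3$ with $\mu(I)=N_k$) and then carry out the dimension count, the remaining cohomological work being exactly the general statement proved earlier.
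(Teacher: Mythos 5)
Your proposal is correct and takes essentially the same route as the paper: the paper's proof of Theorem~\ref{dimX_V!LieAlgebra} likewise consists of verifying, via Lemma~\ref{lemma:affine_min_gen}, that $\overline{I_{\hat{\mathfrak{g}}}(k,0)}$ is minimally generated by a basis of the irreducible module $L((k+1)\theta)$ in degree $k+1\geq 3$ (the paper observes that linearly independent homogeneous generators of equal degree are automatically minimal, in place of your graded Nakayama argument), then applying Theorem~\ref{SliceofMaxH(R)ab} and evaluating $N_k=\dim_\cc L((k+1)\theta)$ by the Weyl dimension formula. Your heuristic gloss on \emph{why} the $\beta$-classes stay algebraically independent (disjointness of internal degrees $2$ and $k+1$) differs from the paper's actual mechanism, which runs through compatible Tate resolutions relative to the auxiliary complete intersection with $\tilde c_j=c_j-\tilde t_{n+j}^3$ and a cocycle lemma exploiting minimality, but since you invoke the Section~\ref{Sectiondimcomology} result rather than reprove it, this does not affect correctness.
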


Although the associated variety of $L_{\hat{\mathfrak{g}}}(k,0)$ is a single point for all $k \in \mathbb{N}$, the cohomological variety can be very large depending on $k$, as $N_k$ is a polynomial in $k$ with positive coefficients. We will give explicit expressions of this polynomial for lower rank cases.

\section{Varieties associated to a vertex operator algebra}\label{SectionDef}
We will use the notations from \cite{Lepowsky-Li} for a vertex algebra $ (V, Y, \mathbf 1)$ and a vertex operator algebra $ (V, Y, \mathbf 1, \omega)$ as a vertex algebra together with a conformal vector $ \omega \in V_2$. The vertex operators are written as $Y(v, x)=\sum_{n\in\mathbb Z} v_n x^{-n-1}$ with $ v_n \in \on{End}_{\cc}(V)$ and $ Y(\omega, x)=\sum_{n\in \mathbb Z} L_{n}x^{-n-2}$. The $C_2$-algebra $R(V)$ for a vertex algebra is defined in \cite{Zhu} as the quotient $R(V)=V/C_2(V)$.  We note that for any $ v, u \in V$, 
\[ Y(v, x)u=\sum_{n\geq -1}v_n(u)x^{-n-1}+\sum_{n\leq -2}v_n(u)x^{-n-1}.\]
The second infinite summation has all coefficients in $ C_2(V)$.  For each element $ v\in V$, we use $ \bar v\in R(V)$ to denote its image. Then the associative multiplication in  $R(V)$ is defined by $ \bar v\bar u=\overline{v_{-1}(u)}$ and the Poisson structure is defined by $ \{ \bar v , \bar u\} =\overline{v_{0}(u)}$.  It is proved in \cite{Zhu} that $ R(V)$ is a commutative algebra over $ \cc$. We will not use the Poisson structure in this paper. 
\begin{definition}
Let $V$ be a vertex algebra.
\begin{itemize}
\item The \textbf{associated scheme} of $V$ is $\widetilde{X}_V=\on{Spec}(R(V))$, the spectrum of all prime ideals of $ R(V)$. 
\item The \textbf{associated variety} of $V$ is $X_V=\on{Max}(R(V))$, i.e., the maximal ideal spectrum, the set of all closed points in $ \on{Spec}(R(V))$. 
\end{itemize}
\end{definition}

The associated scheme and variety of a vertex operator algebra have been vastly studied and, in particular, linked to Slodowy slices in Lie algebras and W-algebras. For example, Arakawa showed in \cite{Arakawa1} that a finitely strongly generated vertex algebra $V$ is $C_2$-cofinite if and only if $\on{dim}_\cc X_V=0$, and in \cite{Arakawa2} he proved that for a Lie algebra $\mathfrak{g}$, if an element $k \in \cc$ is admissible, then $X_{L_{\hat{\mathfrak{g}}}(k,0)}$ is the closure of an orbit in the nilpotent cone of $\mathfrak{g}$.

Let $V=\sum_{n=0}^{\infty}V_n$ be an $\mathbb{N}$-graded vertex operator algebra. Here and in the rest of this article, $\mathbb{N}$-graded means  that $V_n=\on{Ker}( L(0)-n\on{Id}_V: V\to V)$ with $ n\in \mathbb N$. Then $R(V)=\bigoplus_{n=0}^{\infty}R(V)_n$ is an $\mathbb N$-graded commutative algebra with $ R(V)_n=\overline{V_n}$.  In particular, if $V$ is $C_2$-cofinite, then $ R(V)$ is a finite dimensional commutative algebra with an $\mathbb{N}$-graded structure. The commutative algebra homomorphism $ R(V)_0\to R(V)$ defines a morphism of schemes $ \on{Spec}(R(V))\to \on{Spec}(R(V)_0)$.  For each closed point $x\in \on{Max}(R(V)_0)$,  its fiber  in $ \on{Max}(R(V))$ is a conical variety with $x$ as the vertex. We are interested in studying geometric properties of the vertex in this conical variety.

\subsection{}The closed point $ x$ corresponding to the maximal ideal $\mathfrak{m}_x \in \on{Max}(R(V)_0)$ defines an irreducible $R(V)_0$-module $ \cc_x=R(V)_0/\mathfrak m_x$, which pulls back through the homomorphism $R(V)\to R(V)_0$ to an $R(V)$-module  $\cc_x$.  We can thus define the Ext algebra (also called Yoneda algebra) $\on{Ext}^{*}_{R(V)}(\cc_x,\cc_x)$, with the multiplication being given by the Yoneda product. Using the notation of Cartan \& Eilenberg (\cite{Cartan-Eilenberg}), $x$ defines an augmented algebra structure $x: R(V)\to \cc$ and we will write $H_x^*(R(V))=\on{Ext}^{*}_{R(V)}(\cc_x,\cc_x)$, the cohomology of the augmented algebra. 

Let us briefly recall that the cohomology $ H_x^*(R(V))=\on{Ext}^{*}_{R(V)}(\cc_x,\cc_x)$ can be computed by constructing a projective resolution of $ \cc_x$ as $R(V)$-module 
\[ \cdots \to P_i\to \cdots \to P_1\to P_0\to \cc_x\to 0\]
and then apply the contravariant functor $\on{Hom}_{R(V)}(-, \cc_x )$ to compute the cohomology of the resulting cochain complex.  The cohomology classes in $ H^n_x(R(V))$ can be interpreted as the equivalent classes of exact sequences (see \cite[3.4.6]{Weibel}, \cite[2.7]{Benson} and  \cite[\textrm{III}.6]{MacLane})
\[\xi: \quad0\to \cc_x  \to X_{n-1}\to \cdots \to X_0\to \cc_x\to 0\]
 of $R(V)$-modules. Given another exact sequence
 \[\eta:\quad 0\to \cc_x  \to Y_{m-1}\to \cdots \to Y_0\to \cc_x\to 0,\]
 the Yoneda product $ \xi\circ \eta $ is defined by simply connecting the exact sequences 
\begin{align}\label{eq:Yoneda-composition}\xi\circ \eta: \quad 0\to \cc_x  \to X_{n-1}\to \cdots \to X_0\to Y_{m-1}\to \cdots\to Y_0\to \cc_x\to 0
 \end{align}
 with $ X_0\to Y_{m-1}$ being the obvious composition map.  As discussed in \cite{Caradot-Jiang-Lin_1}, the Yoneda algebra can be defined for any object in any abelian category. The algebra $\on{Ext}^{*}_{R(V)}(\cc_x, \cc_x)$ is graded but is not graded commutative in general. 

Ideally, we would want to look at the prime and maximal spectrums of $H_x^*(R(V))$ and compare them to $\widetilde{X}_V$ and $X_V$. But $H_x^*(R(V))$ might not be commutative. Even when $R(V)$ is a Hopf algebra, $H_x^*(R(V))$ is graded commutative and not commutative. So we need to construct a commutative algebra. 

We define the set 
\[ \mathcal{I}_x= \{ I \text{ two sided ideal in } H_x^{*}(R(V)) \text{ such that } H_x^{*}(R(V))/I \text{ is commutative}\} \]
and a partial order $\succcurlyeq$ on $\mathcal{I}_x$ with $I \succcurlyeq J$ if and only if $I \subseteq J$. When $I \succcurlyeq J$, we denote $\varphi_{I,J}: H_x^{*}(R(V))/I \longrightarrow H_x^{*}(R(V))/J$ the natural projection. If $I \succcurlyeq J \succcurlyeq K$, then the following diagram commutes:
 \begin{center}
  \begin{tikzpicture}[scale=0.9,  transform shape]
  \tikzset{>=stealth}
  
\node (1) at ( -3,0){$H_x^{*}(R(V))/I$};
\node (2) at ( 3,0){$H_x^{*}(R(V))/J$};
\node (3) at ( 0,-2){$H_x^{*}(R(V))/K$};

\node (4) at ( 0,-0.9){$\circlearrowleft$};

\draw [decoration={markings,mark=at position 1 with
    {\arrow[scale=1.2,>=stealth]{>}}},postaction={decorate}] (1) --  (2) node[midway, above] {$\varphi_{I,J}$};
\draw [decoration={markings,mark=at position 1 with
    {\arrow[scale=1.2,>=stealth]{>}}},postaction={decorate}] (2)  --  (3) node[midway, below right] {$\varphi_{J, K}$};

\draw [decoration={markings,mark=at position 1 with
    {\arrow[scale=1.2,>=stealth]{>}}},postaction={decorate}] (1)  --  (3) node[midway, below left] {$\varphi_{I,K}$};
\end{tikzpicture}
 \end{center}
and $\varphi_{I,I}$ is the identity on $H_x^{*}(R(V))/I$. Therefore $\{H_x^{*}(R(V))/I, \varphi_{I, J}, \mathcal{I}_x\}$ is an inverse system, and we can define the inverse limit:
\begin{align*}
H_x^{*}(R(V))^{ab}= \underset{I \in \mathcal{I}_x}{\varprojlim} \ H_x^{*}(R(V))/I = \left \{a \in \prod_{I \in \mathcal{I}_x} H_x^{*}(R(V))/I \ | \ a_J = \varphi_{I, J}(a_I)\right \}.
\end{align*}
 
We have a natural morphism of algebras:
\begin{align*}
\begin{array}{cccc}
c_{R(V),x}:&  H_x^{*}(R(V)) & \longrightarrow & H_x^{*}(R(V))^{ab} \\
 & a  & \longmapsto & (a \text{ mod }I)_{I \in \mathcal{I}_x}.
\end{array}
\end{align*}
For each $ I \in \mathcal I_x$, let $ p_I: H_x^{*}(R(V))^{ab}\to H_x^{*}(R(V))/I$ be the projection and $ \pi_I: H_x^{*}(R(V))\to H_x^{*}(R(V))/I$ be the quotient map. Following the construction, we have $ \pi_I=p_I\circ c_{R(V), x}$. Note that $ \pi_I$ is surjective, so is $ p_I$.  

In fact, the commutative algebra $H_x^{*}(R(V))^{ab}$ has the following universal property: for any commutative algebra $A$ and morphism of algebras $H_x^{*}(R(V)) \longrightarrow A$, there exists a unique morphism of algebras $H_x^{*}(R(V))^{ab} \longrightarrow A$ which makes the diagram commute:
 \begin{center}
  \begin{tikzpicture}[scale=0.9,  transform shape]
  \tikzset{>=stealth}
  
\node (1) at ( 0,0){$H_x^{*}(R(V))$};
\node (2) at ( 5,0){$H_x^{*}(R(V))^{ab}$};
\node (3) at ( 0,-3){$A$};

\draw [decoration={markings,mark=at position 1 with
    {\arrow[scale=1.2,>=stealth]{>}}},postaction={decorate}] (1) --  (2) node[midway, above] {$c_{R(V), x}$};
\draw [decoration={markings,mark=at position 1 with
    {\arrow[scale=1.2,>=stealth]{>}}},postaction={decorate}] (2)  --  (3);
\draw [decoration={markings,mark=at position 1 with
    {\arrow[scale=1.2,>=stealth]{>}}},postaction={decorate}] (1)  --  (3) node[midway, left] {};
\end{tikzpicture}.
 \end{center}

We remark that for a general  non-commutative algebra $ U$, one can follow exactly the same construction to define $ U^{ab}$ as the universal commutative algebra satisfying the above universal property. It is necessary that $ U^{ab}=U/\langle [U,U]\rangle $ since the subset $ [U,U]=\{ [u,v]=uv-vu\;|\; u, v\in U\}$ has to be in the kernel of any algebra homomorphism $ U\to A$ with $ A$ being commutative. However, it is likely that $ U^{ab}=\{0\}$. For example, let $ U=\cc\langle x, y\rangle/\langle [x,y]-1\rangle$ be the Weyl algebra. Then $ U^{ab}=0$. If $ U=U(\mathfrak g)$ is the universal enveloping algebra of a semisimple Lie algebra, then $ U^{ab}=\cc$. Thus the algebra $U^{ab}$ is not a good model for attaching algebraic geometric objects.  But for the Yoneda algebra $ H^*_x(R)$ of a commutative algebra $R$, the commutative algebra $ H_x^{*}(R)^{ab}$ is close enough to $ H^*_x(R)$, in particular when $R$ is a complete intersection. This will be discussed in \cite{Caradot-Jiang-Lin_2}. 

Since $H_x^{*}(R(V))^{ab}$ is a commutative algebra, it makes sense to define its spectra. 
\begin{definition}\label{def:cohom_var}
Let $V$ be an $\mathbb{N}$-graded vertex operator algebra and set $\mathfrak{m}_x \in \on{Max}(R(V)_0)$.
\begin{itemize}\setlength{\itemsep}{5pt}
\item The \textbf{cohomological scheme} of $V$ at $x$ is $\widetilde{X}_{V, x}^{!}=\on{Spec}(H_x^{*}(R(V))^{ab})$.
\item The \textbf{cohomological variety} of $V$ at $x$ is $X_{V ,x}^{!}=\on{Max}(H_x^{*}(R(V))^{ab})$.
\end{itemize}
\end{definition}
Note that we have been unspecific whether the algebra $ H_x^{*}(R(V))^{ab}$ is finitely generated over $\cc$ or not. Thus the variety may not necessarily be of finite type in general. 

\delete{\begin{remark}
An equivalent way to define the cohomological scheme/variety would be to look at the prime/maximal spectrum of $H_x^{*}(R(V))/[H_x^{*}(R(V)), H_x^{*}(R(V))]$, the largest commutative quotient of $H_x^{*}(R(V))$. Indeed, any ideal in $\mathcal{I}_x$ contains the commutator ideal $[H_x^{*}(R(V)), H_x^{*}(R(V))]$, and so the inverse limit is entirely defined by the above quotient. The advantage of our definition is that if we find some ideal in $\mathcal{I}_x$, then we can determine a subvariety of the cohomological variety (cf. Proposition~\ref{DecompXcohom}).
\end{remark}}

For a positively graded ring $A=\bigoplus_{i \geq 0}A_i$ we may consider $A_0$ as a left $A$-module through $A_0 = A/A_{>0}$ and form the positively graded ring $E(A) = \on{Ext}^{*}_{A}(A_0,A_0)$. If a Koszul ring $A$ satisfies a suitable finiteness condition, then the algebra $E(A)$ is Koszul as well and there exists a canonical isomorphism $E(E(A)) \cong A$. This motivates the appellation of $E(A)$ as the Koszul dual of $A$ (cf. \cite{Beilinson-Ginzburg-Soergel} and \cite{Green-Reiten-Solberg} for definitions and results on Koszul algebras). In our context, $A=R(V)$ is not necessarily Koszul but it is $\mathbb{N}$-graded and, if $V_0=\cc \mathbf{1}$, then $A_0=\cc$ and so $H_x^*(R(V))=E(R(V))$ with $\mathfrak{m}_x \in \on{Max}(R(V)_0)$. Therefore there is a duality between $R(V)$ and $H_x^*(R(V))$, which suggests a dual relation between $\widetilde{X}_V$, $X_V$ and $\widetilde{X}^{!}_{V, x}$, $X^{!}_{V, x}$.

The algebra $H_x^{*}(R(V))^{ab}$ is not easy to determine in the general case. However, we can decompose its spectrum as a union of closed subsets.

\begin{proposition}\label{DecompXcohom}
For an $\mathbb{N}$-graded vertex operator algebra $V$ with $\mathfrak{m}_x \in \on{Max}(R(V)_0)$, we have:
\begin{align*}
X_{V, x}^{!}=\displaystyle  \bigcup_{I \in \mathcal{I}_x} \on{Max}(H_x^{*}(R(V))/I).
\end{align*}
\end{proposition}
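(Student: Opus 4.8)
The plan is to prove the two inclusions separately, after first replacing the inverse limit by a single concrete quotient. Write $H = H_x^{*}(R(V))$ and let $I_0 = \langle [H,H]\rangle$ be the two-sided ideal generated by all commutators. Since $H/I$ is commutative precisely when $[H,H] \subseteq I$, and since $I$ is a two-sided ideal, this happens exactly when $I_0 \subseteq I$; hence $I_0 \in \mathcal{I}_x$ and $I_0 \subseteq I$ for every $I \in \mathcal{I}_x$, i.e. $I_0$ is the greatest element of $(\mathcal{I}_x, \succcurlyeq)$. A poset with a greatest element is directed, and an inverse limit over such a poset is attained at that element, so $H_x^{*}(R(V))^{ab} = \varprojlim_{I} H/I \cong H/I_0$ with $c_{R(V),x}$ identified with the surjective quotient map $\pi_{I_0} : H \to H/I_0$. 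In particular $c_{R(V),x}$ is surjective, which I will use repeatedly. Throughout, for $I \in \mathcal{I}_x$ I regard $\on{Max}(H/I)$ as a subset of $X_{V,x}^{!} = \on{Max}(H/I_0)$ via the closed embedding induced by the surjection $p_I : H/I_0 \to H/I$.

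For the inclusion $\bigcup_{I} \on{Max}(H/I) \subseteq X_{V,x}^{!}$ I would simply invoke the ideal correspondence theorem: since $p_I$ is surjective, pulling back along $p_I$ identifies $\on{Max}(H/I)$ with the set of maximal ideals of $H/I_0$ containing $\ker p_I$, a closed subset of $\on{Max}(H/I_0)$. Taking the union over $I \in \mathcal{I}_x$ therefore stays inside $\on{Max}(H/I_0) = X_{V,x}^{!}$, and this inclusion needs no further work.

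The substance is the reverse inclusion $X_{V,x}^{!} \subseteq \bigcup_{I} \on{Max}(H/I)$. Given $\mathfrak m \in \on{Max}(H/I_0)$, I would set $\tilde{\mathfrak m} = c_{R(V),x}^{-1}(\mathfrak m)$, a two-sided ideal of $H$ containing $I_0$. The key point is that $\tilde{\mathfrak m}$ itself lies in $\mathcal{I}_x$: because $\mathfrak m$ is maximal in the commutative algebra $H/I_0$, the quotient $(H/I_0)/\mathfrak m$ is a field, and surjectivity of $c_{R(V),x}$ gives $H/\tilde{\mathfrak m} \cong (H/I_0)/\mathfrak m$, which is in particular commutative. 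Hence $\tilde{\mathfrak m} \in \mathcal{I}_x$, and $\on{Max}(H/\tilde{\mathfrak m})$ consists of the single point given by the zero ideal of the field $H/\tilde{\mathfrak m}$. Finally $\ker p_{\tilde{\mathfrak m}} = \tilde{\mathfrak m}/I_0 = \mathfrak m$, so under the embedding $\on{Max}(H/\tilde{\mathfrak m}) \hookrightarrow \on{Max}(H/I_0)$ this single point is exactly $\mathfrak m$. Thus every $\mathfrak m$ arises from $I = \tilde{\mathfrak m} \in \mathcal{I}_x$, completing the inclusion.

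The only genuinely delicate step is this reverse inclusion, and within it the observation that the preimage of a maximal ideal of the commutative quotient is again an ideal in $\mathcal{I}_x$ — this is what makes each individual point of $X_{V,x}^{!}$ visible in some commutative quotient $H/I$. The remaining care is bookkeeping: that $c_{R(V),x}$ is surjective (which is why the inverse limit collapses to $H/\langle[H,H]\rangle$) and the standard correspondence between maximal ideals of a quotient ring and maximal ideals of the ring containing the kernel. No hard computation is involved; the statement is essentially formal once the inverse limit is identified with $H/\langle[H,H]\rangle$.
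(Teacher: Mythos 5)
Your proof is correct, and at the one nontrivial step it runs on a different mechanism than the paper's. Writing $H=H_x^*(R(V))$, you first collapse the inverse limit: since $H/I$ is commutative exactly when $I\supseteq I_0=\langle [H,H]\rangle$, the poset $(\mathcal I_x,\succcurlyeq)$ has a greatest element $I_0$, so $H_x^*(R(V))^{ab}\cong H/I_0$ and $c_{R(V),x}$ becomes the surjective quotient map $\pi_{I_0}$; after that, both inclusions are the standard ideal correspondence for surjections of commutative unital rings, and a maximal ideal $\mathfrak m\subseteq H/I_0$ is exhibited inside the union via $\tilde{\mathfrak m}=c_{R(V),x}^{-1}(\mathfrak m)\in\mathcal I_x$, whose quotient is the field $(H/I_0)/\mathfrak m$. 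The paper never computes the limit: given $N$ maximal in $H_x^*(R(V))^{ab}$ it sets $I=\on{Ker}(\psi_N\circ c_{R(V),x})$ --- which is literally the same ideal as your $\tilde{\mathfrak m}$ --- notes that $H/I$ embeds in the field $H_x^*(R(V))^{ab}/N$ and hence lies in $\mathcal I_x$, and then invokes the uniqueness clause of the universal property of the abelianization to force $\psi_N=\iota\circ p_I$, so that surjectivity of $\psi_N$ makes $\iota$ an identification, $H/I$ a field, and $N=p_I^{-1}(0)$. In effect the paper's universal-property argument establishes, point by point, the surjectivity of $c_{R(V),x}$ that your identification makes global and explicit. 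What your route buys is economy and transparency: once $H_x^*(R(V))^{ab}\cong H/\langle[H,H]\rangle$ is observed, the proposition is pure bookkeeping, with no appeal to uniqueness of induced maps. What the paper's route buys is independence from any computation of the limit: it works directly from the universal property, which is the form in which $H_x^*(R(V))^{ab}$ is used elsewhere in the paper (e.g.\ in the functoriality and tensor-product arguments), and it keeps the emphasis on the fact that each individual $I\in\mathcal I_x$ cuts out a closed subvariety, which is how the authors later produce subvarieties of $X^!_{V,x}$ in practice. One point you handled correctly but should keep explicit in a final write-up: the stated equality only makes sense after identifying $\on{Max}(H/I)$ with its image in $\on{Max}(H_x^*(R(V))^{ab})$ under $p_I^{-1}$, as both you and the paper do.
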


\begin{proof}  For each $ I\in \mathcal I_x$, since the projection map $ p_I: H_x^{*}(R(V))^{ab}\to H_x^{*}(R(V))/I $ is surjective,  for any  maximal ideal $M$ of $H_x^{*}(R(V))/I$, we have 
$p_I^{-1}(M)\in \on{Max}(H_x^{*}(R(V))^{ab})$ and $\on{Max}(H^*_x(R(V))/I)$ is the closed subvariety  defined by the ideal $ \on{Ker} (p_I)$ of $ H_x^{*}(R(V))^{ab})$.
\delete{For any $I' \in \mathcal{I}_x$, we have $L \subseteq I'$ with $L=[H_x^{*}(R(V)), H_x^{*}(R(V))]$. We thus have a surjective morphism $\varphi_{L,I'}: H_x^{*}(R(V))/L \longrightarrow H_x^{*}(R(V))/I'$. In particular, the ideal $\mathfrak{m}=\varphi_{L, I}^{-1}(M)$ is maximal in $H_x^{*}(R(V))/L$ with the quotient $(H_x^{*}(R(V))/L)/\mathfrak{m}$ being a field $\mathbf{k}$. Therefore for any $a \in H_x^{*}(R(V))^{ab}$, we have $\overline{a} \in H_x^{*}(R(V))^{ab}/M^I$ given by $(\overline{a_L}, \overline{\varphi_{L, I'}}(\overline{a_L}) \ | \ I' \in \mathcal{I}_x)$, with $\overline{a_L}=a_L \on{mod} \mathfrak{m}$ and $\overline{\varphi_{L, I'}}: (H_x^{*}(R(V))/L)/\mathfrak{m} \longrightarrow (H_x^{*}(R(V))/I')/\varphi_{L, I'}(\mathfrak{m})$ the induced surjective morphism. However, we notice that $\varphi_{L, I'}(\mathfrak{m})$ is either a maximal ideal of $H_x^{*}(R(V))/I'$ or the whole of $H_x^{*}(R(V))/I'$, and thus the morphism $\overline{\varphi_{L, I'}}$ is either $0$ or an isomorphism $\mathbf{k} \longrightarrow \mathbf{k}$. Hence $\overline{a}$ is entirely defined by $\overline{a_L} \in \mathbf{k}$. It follows that $H_x^{*}(R(V))^{ab}/M^I \cong \mathbf{k}$ and $M^I$ is a maximal ideal of $H_x^{*}(R(V))^{ab}$.}

Let $N$ be a maximal ideal of $H_x^{*}(R(V))^{ab}$.  Consider the composition $\sigma=\psi_N\circ c_{R(V),x}$ 
\[ \xymatrix{ H^*_x(R(V)) \ar[r]^{c_{R(V),x}} \ar[dr]_{\sigma}& H_x^{*}(R(V))^{ab}\ar[d]^{\psi_N}\\
& H_x^{*}(R(V))^{ab}/N}
\] of algebra homomorphisms. Let $ I=\on{Ker}(\sigma)\subset H^*_x(R(V))$ be a kernel of this homomorphism. Since $ H^*_x(R(V))/I=\on{Im}(\sigma)$ is subring of a field, we have $ I\in \mathcal I_x$ and we have the following commutative diagram
\[ \xymatrix{ H^*_x(R(V))\ar[r]^{c_{R(V),x}}\ar[d]_{\pi_I}& H_x^{*}(R(V))^{ab}\ar[d]^{\psi_N}\ar[dl]_{p_I}\\
H^*_x(R(V))/I\ar[r]_{\iota}& H_x^{*}(R(V))^{ab}/N}
\]
with $ \iota: \on{Im}(\sigma)\to H_x^{*}(R(V))^{ab}/N$ being the embedding and $\sigma=\iota\circ \pi_I$.  By the uniqueness of the map $\psi_N$, we have $ \psi_N=\iota\circ p_I$. The surjectivity of the quotient map $ \psi_N$ implies that $ \iota$ must be identity, and so $H_x^{*}(R(V))/I$ is a field and $p_I=\psi_N$.  Thus $ N=p_I^{-1}(0)\in  \on{Max}(H_x^*(R(V))/I)$. 
\end{proof}
\delete{
Otherwise, $\pi_I(N)=H_x^*(R(V))/I$ for all $I \in \mathcal I_x$. 
 the $L$-th coordinate $N_L$ is a maximal ideal of $H_x^{*}(R(V))/L$. Indeed, if it were the whole of $H_x^{*}(R(V))/L$, then $\varphi_{L, I}(N_L)=H_x^{*}(R(V))/I$ for any $I \in \mathcal{I}_x$. But then in the quotient 
$H_x^{*}(R(V))^{ab}/N$, each coordinate would be zero, which would lead to $H_x^{*}(R(V))^{ab}/N=\{0\}$, contradicting the maximality of $N$. Thus $N_L$ is a proper ideal. As $H_x^{*}(R(V))^{ab}/N$ is a field, each coordinate has to be either a field or zero. Hence $(H_x^{*}(R(V))/L)/N_L$ is a field, and $N_L$ is a maximal ideal. We see that $N \subseteq (N_L)^L$ and they are both maximal. Therefore $N = (N_L)^L$, and any maximal ideal of $H_x^{*}(R(V))^{ab}$ comes from a maximal ideal of $H_x^{*}(R(V))/L$. As $L \in \mathcal{I}_x$, the statement is proved.
\end{proof}}

\section{Properties of the cohomological schemes and varieties}\label{SectionFunctor}
\subsection{Functoriality}

Let $\mathcal{V}$ be the category of vertex algebras, $\mathcal{V}_{f.g}$ the category of vertex algebras with finitely generated $C_2$-algebras, $\mathcal{VOA}_{f.g}$ the category of $\mathbb{N}$-graded vertex operator algebras with finitely generated $C_2$-algebras,  $\mathcal{T}$ the category of algebraic varieties, and $\mathcal{S}$ the category of affine schemes. We define the following maps:
\begin{align*}
\renewcommand\arraystretch{1.5}
\begin{array}{l}
\begin{array}{cccc}
\widetilde{F}: & \mathcal{V} & \longrightarrow & \mathcal{S} \\
 &V & \longmapsto &\widetilde{X}_V, 
\end{array} \\
\begin{array}{cccc}
F: & \mathcal{V}_{f.g} & \longrightarrow & \mathcal{T} \\
 &V & \longmapsto &X_V ,
\end{array} \\
\begin{array}{cccc}
\widetilde{F}^{!}: & \mathcal{VOA}_{f.g} & \longrightarrow & \mathcal{S} \\
 &V & \longmapsto & \widetilde{X}^!_{V ,x}.
\end{array}
\end{array}
\end{align*}

We prove the following:
\begin{proposition}
The maps $F$ and $\widetilde{F}$ are contravariant functors, and the map $\widetilde{F}^{!}$ is a covariant functor.
\end{proposition}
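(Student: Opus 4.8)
The plan is to realize each of the three maps as a composite of elementary functors and to track the variance at each stage. The common first step is to observe that $V \mapsto R(V)$ is itself a covariant functor from vertex algebras to commutative $\cc$-algebras. Indeed, a morphism of vertex algebras $f\colon V\to W$ satisfies $f(v_n u)=f(v)_n f(u)$ for all $v,u\in V$ and $n\in\mathbb Z$, so $f\big(C_2(V)\big)\subseteq C_2(W)$ because $f(v_{-2}u)=f(v)_{-2}f(u)$; hence $f$ descends to a linear map $R(f)\colon R(V)\to R(W)$, $\overline v\mapsto \overline{f(v)}$. This $R(f)$ is a unital algebra homomorphism since $R(f)(\overline v\,\overline u)=\overline{f(v_{-1}u)}=\overline{f(v)_{-1}f(u)}=R(f)(\overline v)\,R(f)(\overline u)$ and $R(f)(\overline{\mathbf 1})=\overline{\mathbf 1}$, and functoriality ($R(\on{id})=\on{id}$, $R(g\circ f)=R(g)\circ R(f)$) is immediate from the formula $\overline v\mapsto\overline{f(v)}$.

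For $\widetilde F$ and $F$ I would then simply postcompose with a contravariant spectrum functor. Since $\on{Spec}$ is contravariant from commutative rings to affine schemes, $\widetilde F=\on{Spec}\circ R$ is contravariant. Restricting to $\mathcal V_{f.g}$, where $R(V)$ is finitely generated, one records that $R(f)$ pulls maximal ideals back to prime ideals and, by the Nullstellensatz (residue fields are $\cc$), to maximal ideals, so $\on{Max}$ is genuinely functorial and $F=\on{Max}\circ R$ is contravariant as well.

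The covariance of $\widetilde F^{!}$ is the substantive part, and I would obtain it by composing two contravariant steps. The key point is that an augmented algebra homomorphism $\phi\colon A\to B$ (with $\epsilon_B\circ\phi=\epsilon_A$) induces a \emph{contravariant} graded algebra homomorphism $\phi^{*}\colon H^{*}(B)\to H^{*}(A)$ on Yoneda algebras. I would prove this directly from the extension description recalled above: restriction of scalars along $\phi$ is an exact functor $B\Mod\to A\Mod$ carrying $\cc_B$ to $\cc_A$, so it sends an $n$-fold Yoneda extension of $\cc$ by $\cc$ over $B$ to one over $A$, and being exact it preserves the splice \eqref{eq:Yoneda-composition}; hence $\phi^{*}$ is multiplicative, and the contravariant functoriality $(\psi\circ\phi)^{*}=\phi^{*}\circ\psi^{*}$ follows because restriction functors compose in the opposite order. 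Applying $\phi=R(f)\colon R(V)\to R(W)$ gives $H_x^{*}(R(W))\to H_x^{*}(R(V))$. The operation $(-)^{ab}$ is functorial by its universal property: the composite $H_x^{*}(R(W))\to H_x^{*}(R(V))\to H_x^{*}(R(V))^{ab}$ lands in a commutative algebra and therefore factors uniquely through $H_x^{*}(R(W))^{ab}$, yielding $H_x^{*}(R(W))^{ab}\to H_x^{*}(R(V))^{ab}$. Postcomposing with the contravariant $\on{Spec}$ produces $\widetilde X^{!}_{V,x}\to \widetilde X^{!}_{W,x}$, in the same direction as $f$; the two contravariant steps (Yoneda and $\on{Spec}$) compose to a covariant functor, with identities and composites preserved at each stage.

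\textbf{Main obstacle.} The hard part is precisely the construction and multiplicativity of $\phi^{*}$ on the Yoneda algebra; the exact-restriction argument on Yoneda splices is what makes it clean, and it is worth spelling out that exactness is exactly what guarantees $\phi^{*}(\xi\circ\eta)=\phi^{*}(\xi)\circ\phi^{*}(\eta)$. A second, smaller point to address is the compatibility of base points: a morphism of $\mathbb N$-graded vertex operator algebras preserves $\omega$ (hence the $L(0)$-grading) and $\mathbf 1$, so $R(f)$ is graded and unital and $R(f)\big(R(V)_0\big)\subseteq R(W)_0$. When $V_0=\cc\mathbf 1$ one has $R(V)_0=\cc$, the augmentation $x$ is the canonical projection, and it is automatically preserved, so $R(f)$ is a map of augmented algebras and $\cc_x$ restricts to $\cc_x$; this is the setting in which $\widetilde X^{!}_{V,x}$ is defined with a single canonical $x$ and in which $\phi^{*}$ is well defined.
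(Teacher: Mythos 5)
Your proof is correct and follows essentially the same route as the paper's: you factor each map through the covariant functor $V\mapsto R(V)$, use the Nullstellensatz to make $\on{Max}$ functorial on finitely generated $\cc$-algebras, obtain the contravariant map on Yoneda algebras from the exactness of restriction of scalars along an augmented homomorphism (exactly the paper's justification for multiplicativity), and invoke the universal property of $(-)^{ab}$ before composing with the contravariant $\on{Spec}$. Your closing remark on base-point compatibility when $V_0=\cc\mathbf 1$ is a welcome explicit treatment of a point the paper handles only implicitly via its category of augmented algebras.
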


\begin{proof} We note that the assignment $V\mapsto R(V)$ is covariant functor from the category of vertex algebras to the category of commutative $\cc$-algebras, and $\on{Spec}$ is a contravariant functor from the category of commutative $\cc$-algebras to the category of schemes. Hence $\widetilde{F}$ is contravariant. One notes that taking the maximal ideal spectrum $ \on{Max}(R(V))$ is not functorial on the category of commutative $\cc$-algebras, but it is functorial on the category of finitely generated commutative $\cc$-algebras  since,  by Zariski's Lemma or Hilbert Nullstellensatz, every maximal ideal is a kernel of an algebra homomorphism $ R\to \cc$. 

Given a homomorphism of  augmented algebras  $ \phi: (R\to \cc_x)\to (R'\to \cc_{x'})$, by \cite[VIII. 3]{Cartan-Eilenberg} and noting that $ \cc_{x'}$ restricted to $ R$ is isomorphic to $ \cc_x$,  there is a natural homomorphism of  graded $\cc$-vector spaces 
$\phi^\#: H^*(R')\to H^*(R)$.  This homomorphism preserves the Yoneda product since the pullback functor $\phi^*: R'\Mod\to R\Mod$ is exact.  Clearly, the assignment $A\mapsto A^{ab}$ is also covariant. Thus $ (R\to \cc_x)\mapsto \on{Spec}(H_x^{*}(R)^{ab})$ is a covariant functor.  We remark that we cannot say that $ (R\to \cc_x)\mapsto \on{Max}(H_x^{*}(R)^{ab})$ is functorial since it is not known when $H_x^{*}(R)^{ab}$ is a finitely generated algebra.
\end{proof}
\delete{
Let $f:V \longrightarrow V'$ be a homomorphism of vertex algebras. The space $C_2(V)$ is spanned by $u_{-2}v$ with $u,v \in V$, and we have $f(u_{-2}v)=f(u)_{-2}f(v)$, hence $f(C_2(V)) \subseteq C_2(V')$. Therefore $f$ induces a morphism of Poisson algebras $\varphi : R(V) \longrightarrow R(V')$. By taking the spectrum on both sides, we obtain a continuous morphism $\widetilde{F}(f)=\varphi^*: \widetilde{X}_{V'} \longrightarrow \widetilde{X}_{V}$. If $R(V')$ is finitely generated, then we can look at the maximal spectrums and get $F(f): X_{V'} \longrightarrow X_{V}$. 

It is immediate to check that $\widetilde{F}(\on{id}_{V})=\on{id}_{\widetilde{X}_V}$ and $F(\on{id}_{V})=\on{id}_{X_V}$. 

Let $f_1:V \longrightarrow V'$ and $f_2:V' \longrightarrow V''$ be two vertex algebra morphisms. The composition $f_2 \circ f_1$ induces a morphism $\varphi_{1,2}=\varphi_2 \circ \varphi_1 : R(V) \longrightarrow R(V'')$, which then leads to $\varphi_{1,2}^*:\widetilde{X}_{V''} \longrightarrow \widetilde{X}_{V}$. If $\mathfrak{p} \in \widetilde{X}_{V''}$, then $\varphi_{1,2}^*(\mathfrak{p})=\varphi_{1,2}^{-1}(\mathfrak{p})=(\varphi_2 \circ \varphi_1)^{-1}(\mathfrak{p})=\varphi_1^{-1} \circ \varphi_2^{-1}(\mathfrak{p})=\varphi_1^{*} \circ \varphi_2^{*}(\mathfrak{p})$. We have then $\widetilde{F}(f_1) \circ \widetilde{F}(f_2)=\widetilde{F}(f_2 \circ f_1)$ and so $\widetilde{F}$ is a contravariant functor. It works the same for $F$.

Let $f:V \longrightarrow V'$ be a morphism of $\mathbb{N}$-graded vertex operator algebras with finitely generated $C_2$-algebras. As above, it induces $\varphi : R(V) \longrightarrow R(V')$. By setting $\mathfrak{m}_x \in \on{Max}(R(V')_0)$, we can define the $R(V')$-module $\cc_x$ and the algebra $H_x^{*}(R(V'))$. As the projection $R(V') \longrightarrow R(V')_0$ is surjective, the preimage of $\mathfrak{m}_x$ is a maximal ideal $M_x$ is $R(V')$ containing $R(V')_{>0}$, and because $R(V')$ is finitely generated, the ideal $\varphi^{-1}(M_x)$ is maximal in $R(V)$. The morphism $\varphi$ preserves the degree because it is induced by a homomorphism of vertex operator algebras which preserves the conformal vector $\omega$, therefore $\varphi(R(V)_{>0}) \subseteq R(V')_{>0} \subseteq M_x$, and so $R(V)_{>0} \subseteq \varphi^{-1}(M_x)$. We thus have $\cc \cong R(V)/\varphi^{-1}(M_x) \cong (R(V)/R(V)_{>0})/(\varphi^{-1}(M_x)/R(V)_{>0}) \cong R(V)_0/(\varphi^{-1}(M_x)/R(V)_{>0})$. We see that $\varphi^{-1}(M_x)/R(V)_{>0}$ is a maximal ideal $\mathfrak{m}_y$ of $R(V)_0$. 

The action of $R(V)$ on $\cc$ is given by the composition $R(V) \stackrel{\varphi}{\longrightarrow} R(V') \longrightarrow R(V')_0 \stackrel{/ \mathfrak{m}_x}{\longrightarrow} \cc_x$, which is the same as $R(V) \longrightarrow R(V)_0 \stackrel{/ \mathfrak{m}_y}{\longrightarrow} \cc_y$. Therefore $\varphi$ induces an algebra morphism:
\begin{align*}
\varphi^{\#} : H_x^*(R(V')) \longrightarrow H_y^*(R(V')),
\end{align*}
and we have the following commutative diagram:
 \begin{center}
  \begin{tikzpicture}[scale=0.9,  transform shape]
  \tikzset{>=stealth}
  
\node (1) at ( 0,0){$H_x^{*}(R(V'))$};
\node (2) at ( 5,0){$H_x^{*}(R(V'))^{ab}$};
\node (3) at ( 0,-3){$H_y^{*}(R(V))^{ab}$};

\node (4) at ( 1.5,-1){$\circlearrowleft$};

\draw [decoration={markings,mark=at position 1 with
    {\arrow[scale=1.2,>=stealth]{>}}},postaction={decorate}] (1) --  (2) node[midway, above] {$c_{R(V'), x}$};
\draw [decoration={markings,mark=at position 1 with
    {\arrow[scale=1.2,>=stealth]{>}}},postaction={decorate}] (2)  --  (3) node[midway, below right] {$(\varphi^{\#})^c$};
\draw [decoration={markings,mark=at position 1 with
    {\arrow[scale=1.2,>=stealth]{>}}},postaction={decorate}] (1)  --  (3) node[midway, left] {$c_{R(V), y} \circ \varphi^{\#}$};
\end{tikzpicture}
 \end{center}
The existence of $(\varphi^{\#})^c$ comes from the commutativity of $H_y^{*}(R(V_1))^{ab}$ and the universal property of $H_x^{*}(R(V_2))^{ab}$. This morphism then induces a continuous morphism $\widetilde{F}^{!}(f): \widetilde{X}_{V_1, y}^{!} \longrightarrow \widetilde{X}_{V_2, x}^{!}$, and we can verify that $\widetilde{F}^{!}(\on{id}_{V})=\on{id}_{\widetilde{X}_{V, x}^{!}}$. 

Let $f_1:V \longrightarrow V'$ and $f_2:V' \longrightarrow V''$ be two morphisms of $\mathbb{N}$-graded vertex operator algebras with finitely generated $C_2$-algebras. Set $\mathfrak{m}_x \in \on{Max}(R(V'')_0)$. The morphism $\varphi_{1,2}$ induces $\varphi_{1,2}^{\#} : H_x^*(R(V'')) \longrightarrow H_z^*(R(V))$ with $\mathfrak{m}_z=(\varphi_2 \circ \varphi_1)^{-1}(M_x)/R(V)_{>0}$, and one can check that $\varphi_{1,2}^{\#}=\varphi_{1}^{\#} \circ \varphi_{2}^{\#}$. Indeed, if $S \in H_x^n(R(V''))$ is an $n$-fold extension of $R(V_3)$-modules, then $\varphi_{1,2}^{\#}(S)$ is the same extension seen as an extension of $R(V)$-modules through $\varphi_{1,2}$. We thus have $\varphi_{1, 2}^{\#}=\varphi_{1}^{\#} \circ \varphi_{2}^{\#}: H_x^{*}(R(V'')) \longrightarrow H_z^{*}(R(V))$, which leads to:  \begin{center}
  \begin{tikzpicture}[scale=0.9,  transform shape]
  \tikzset{>=stealth}
  
\node (1) at ( 0,0){$H_x^{*}(R(V''))$};
\node (2) at ( 5,0){$H_x^{*}(R(V''))^{ab}$};
\node (3) at ( 0,-2){$H_y^{*}(R(V'))$};
\node (4) at ( 0,-4){$H_z^{*}(R(V))$};
\node (5) at ( 0,-6){$H_z^{*}(R(V))^{ab}$};

\node (6) at ( 2,-2){$\circlearrowleft$};

\draw [decoration={markings,mark=at position 1 with
    {\arrow[scale=1.2,>=stealth]{>}}},postaction={decorate}] (1) --  (2) node[midway, above] {$c_{R(V_3), x}$};
\draw [decoration={markings,mark=at position 1 with
    {\arrow[scale=1.2,>=stealth]{>}}},postaction={decorate}] (2)  --  (5) node[midway, below right] {$(\varphi_1^{\#} \circ \varphi_2^{\#})^c$};
\draw [decoration={markings,mark=at position 1 with
    {\arrow[scale=1.2,>=stealth]{>}}},postaction={decorate}] (1)  --  (3) node[midway, left] {$\varphi_2^{\#}$};
    \draw [decoration={markings,mark=at position 1 with
    {\arrow[scale=1.2,>=stealth]{>}}},postaction={decorate}] (3)  --  (4) node[midway, left] {$ \varphi_1^{\#}$};
    \draw [decoration={markings,mark=at position 1 with
    {\arrow[scale=1.2,>=stealth]{>}}},postaction={decorate}] (4)  --  (5) node[midway, left] {$c_{R(V), z}$};
\end{tikzpicture}
 \end{center}
 where $\mathfrak{m}_y=\varphi_2^{-1}(M_x)/R(V')_{>0}$. We define $\mathcal{I}_{3, x}$ (respectively $\mathcal{I}_{2, y}$, $\mathcal{I}_{1, z}$) the set of ideals defining the inverse system for $H_x^{*}(R(V''))$ (respectively $H_y^{*}(R(V'))$, $H_z^{*}(R(V))$). As the above diagram commutes, for all $a \in H_x^{*}(R(V''))$, we obtain:
\begin{align*}
(\varphi_1^{\#} \circ \varphi_2^{\#})^c((a \text{ mod } J)_{J \in \mathcal{I}_{3, x}})=\left ( (\varphi_1^{\#} \circ \varphi_2^{\#})(a) \text{ mod } I \right )_{I \in \mathcal{I}_{1, z}}.
\end{align*}
By computing the composition of $(\varphi_1^{\#})^c$ and $(\varphi_2^{\#})^c$, we can determine that $(\varphi_1^{\#} \circ \varphi_2^{\#})^c=(\varphi_1^{\#})^c \circ (\varphi_2^{\#})^c$ on $\on{Im}(c_{R(V_3), x})$. 

Set $(a_I)_{I \in \mathcal{I}_{3, x}} \in H_x^{*}(R(V''))^{ab}$ and $r_3 \in H_x^{*}(R(V''))$ such that $a_{\text{Ker} (c_{R(V), z} \circ \varphi_1^{\#} \circ \varphi_2^{\#})}=r_3$ mod $\text{Ker} (c_{R(V), z} \circ \varphi_1^{\#} \circ \varphi_2^{\#})$. As $(\varphi_1^{\#} \circ \varphi_2^{\#})^c$ is the projection on the $\text{Ker} (c_{R(V), z} \circ \varphi_1^{\#} \circ \varphi_2^{\#})$-component, we get $(\varphi_1^{\#} \circ \varphi_2^{\#})^c(a)=a_{\text{Ker} (c_{R(V), z} \circ \varphi_1^{\#} \circ \varphi_2^{\#})}=(\varphi_1^{\#} \circ \varphi_2^{\#})^c(c_{R(V''), x}(r_3))$. 

Moreover, there exists $r_3' \in H_x^{*}(R(V''))$ such that $a_{\text{Ker} (c_{R(V'), y} \circ \varphi_2^{\#})}=r_3' \text{ mod } \text{Ker} (c_{R(V_2), y} \circ \varphi_2^{\#})$. As $(\varphi_2^{\#})^c$ is the projection on the $\text{Ker} (c_{R(V'), y} \circ \varphi_2^{\#})$-component, we see that $(\varphi_2^{\#})^c(a)=( \varphi_2^{\#})^c(c_{R(V''), x}(r_3'))$, and so $(\varphi_1^{\#})^c \circ (\varphi_2^{\#})^c(a)=(\varphi_1^{\#})^c \circ (\varphi_2^{\#})^c(c_{R(V''), x}(r_3'))$. 

Because $(\varphi_1^{\#})^c \circ c_{R(V'), y}= c_{R(V_1), z} \circ \varphi_1^{\#}$, we see that $(\varphi_1^{\#})^c \circ c_{R(V'), y} \circ \varphi_2^{\#}= c_{R(V), z} \circ \varphi_1^{\#} \circ \varphi_2^{\#}$, and thus $\text{Ker}(c_{R(V'), y} \circ \varphi_2^{\#}) \subseteq \text{Ker}(c_{R(V), z} \circ \varphi_1^{\#} \circ \varphi_2^{\#})$. It follows that $\text{Ker}(c_{R(V''), y} \circ \varphi_2^{\#}) \succcurlyeq \text{Ker}(c_{R(V), z} \circ \varphi_1^{\#} \circ \varphi_2^{\#})$ in $\mathcal{I}_{3, x}$, which gives a surjective transition map:
\begin{align*}
\psi : H_x^{*}(R(V''))/\text{Ker}(c_{R(V'), y} \circ \varphi_2^{\#}) \longrightarrow H_x^{*}(R(V''))/\text{Ker}(c_{R(V), z} \circ \varphi_1^{\#} \circ \varphi_2^{\#})
\end{align*}
coming from the inverse system structure. As $a \in H_x^{*}(R(V''))^{ab}$, it follows that $\psi$ sends $a_{\text{Ker} (c_{R(V'), y} \circ \varphi_2^{\#})}$ to $a_{\text{Ker} (c_{R(V), z} \circ \varphi_1^{\#} \circ \varphi_2^{\#})}$, meaning that $r'_3 - r_3 \in \text{Ker} (c_{R(V), z} \circ \varphi_1^{\#} \circ \varphi_2^{\#})$. Based on the previously done computations, we get that $(\varphi_1^{\#} \circ \varphi_2^{\#})^c(c_{R(V''), x}(r_3))=(\varphi_1^{\#} \circ \varphi_2^{\#})^c(c_{R(V''), x}(r'_3))=(\varphi_1^{\#})^c \circ (\varphi_2^{\#})^c(c_{R(V''), x}(r'_3))$, and so $(\varphi_1^{\#} \circ \varphi_2^{\#})^c(a)=(\varphi_1^{\#})^c \circ (\varphi_2^{\#})^c(a)$. We have thus proved that:
\begin{align*}
(\varphi_1^{\#} \circ \varphi_2^{\#})^c=(\varphi_1^{\#})^c \circ (\varphi_2^{\#})^c,
\end{align*}
and so $\widetilde{F}^{!}(f_2 \circ f_1)=(((\varphi_2 \circ \varphi_1)^{\#})^c)^*=((\varphi_1^{\#} \circ \varphi_2^{\#})^c)^*=((\varphi_1^{\#})^c \circ (\varphi_2^{\#})^c)^*=((\varphi_2^{\#})^c)^* \circ ((\varphi_1^{\#})^c)^*=\widetilde{F}^{!}(f_2) \circ \widetilde{F}^{!}(f_1)$. Therefore $\widetilde{F}^{!}$ is a covariant functor.
\end{proof}}

\delete{\begin{remark}
The map $V \longrightarrow X_{V, x}^{!}$ might not be functorial because a morphism $ X_{V_1, y}^{!} \longrightarrow  X_{V_2, x}^{!}$ would be induced by $H_x^{*}(R(V_2))^{ab} \longrightarrow H_y^{*}(R(V_1))^{ab}$, and the preimage of a maximal ideal might not be maximal.
\end{remark}}

\subsection{The structure preserving properties of the functors}

There are two naturally defined operations on vertex algebras, namely the direct sum (rather direct product \cite{Frenkel-Huang-Lepowsky} )and the tensor product (cf. \cite[3.12]{Lepowsky-Li}). The next result shows how the previous functors behave with regards to these operations.

\begin{theorem} \label{XvandXv!}
Let $V_1, V_2$ be vertex algebras in $\mathcal{V}$, $\mathcal{V}_{f.g}$ or $\mathcal{VOA}_{f.g}$ such that the following objects are well-defined. Set $\mathfrak{m}_{x_1} \in \on{Spec}(R(V_1)_0)$ and $\mathfrak{m}_{x_2} \in \on{Spec}(R(V_2)_0)$. We have the following relations:
\[ \renewcommand\arraystretch{1.5}
\begin{array}{ll}
\bullet \widetilde{X}_{V_1 \oplus V_2} \cong \widetilde{X}_{V_1} \sqcup \widetilde{X}_{V_2}, & \bullet \widetilde{X}_{V_1 \otimes V_2} \cong \widetilde{X}_{V_1} \times \widetilde{X}_{V_2}, \\[5pt]
\bullet X_{V_1 \oplus V_2} \cong X_{V_1} \sqcup X_{V_2}, &  \bullet X_{V_1 \otimes V_2} \cong X_{V_1} \times X_{V_2}, \\[5pt]
\bullet \widetilde{X}^{!}_{V_1 \oplus V_2, x} \cong \left\{\begin{array}{c}
        \widetilde{X}^{!}_{V_1, x_1}  \text{ if } x=x_1,\\
        \widetilde{X}^{!}_{V_2, x_2}  \text{ if } x=x_2,
        \end{array}\right. & \bullet \widetilde{X}^{!}_{V_1 \otimes V_2, (x_1, x_2)} \cong \widetilde{X}^{!}_{V_1, x_1} \times \widetilde{X}^{!}_{V_2, x_2}   \text{ if } R(V_1), R(V_2) \text{ local}, \\[5pt]
\bullet X^{!}_{V_1 \oplus V_2 , x} \cong \left\{\begin{array}{c}
        X^{!}_{V_1, x_1}  \text{ if } x=x_1,\\
        X^{!}_{V_2, x_2}  \text{ if } x=x_2,
        \end{array}\right. & \bullet X^{!}_{V_1 \otimes V_2 , (x_1, x_2)} \cong X^{!}_{V_1, x_1} \times X^{!}_{V_2, x_2}  \text{ if }R(V_1), R(V_2)\text{ local}.
\end{array}
\]
\end{theorem}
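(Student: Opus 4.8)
The plan is to push every assertion down to the level of the commutative algebra $R(V)$ and its cohomology, via two ring isomorphisms that I would record first: $R(V_1\oplus V_2)\cong R(V_1)\times R(V_2)$ and $R(V_1\otimes V_2)\cong R(V_1)\otimes_{\cc}R(V_2)$. The direct-sum case is immediate once one observes $C_2(V_1\oplus V_2)=C_2(V_1)\oplus C_2(V_2)$, so the quotient splits as a product of algebras with componentwise multiplication. For the tensor product I would use $Y(a\otimes b,x)=Y(a,x)\otimes Y(b,x)$, which gives $(a\otimes b)_n=\sum_{m+\ell=n-1}a_m\otimes b_\ell$; reading off the $n=-2$ and $n=-1$ coefficients shows $C_2(V_1\otimes V_2)=C_2(V_1)\otimes V_2+V_1\otimes C_2(V_2)$ and that the only surviving term of $\overline{(a\otimes b)_{-1}(c\otimes d)}$ is $\overline{a_{-1}c}\otimes\overline{b_{-1}d}$, i.e. the product matches the tensor product multiplication. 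These two isomorphisms are the backbone of the theorem.

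The four associated scheme and variety relations are then formal. For schemes I would invoke $\on{Spec}(A\times B)\cong\on{Spec}(A)\sqcup\on{Spec}(B)$ together with the identification of $\on{Spec}(A\otimes_{\cc}B)$ with the fibre product $\on{Spec}(A)\times_{\on{Spec}\cc}\on{Spec}(B)$. The disjoint-union statement descends verbatim to maximal spectra, while $\on{Max}(A\otimes_{\cc}B)\cong\on{Max}(A)\times\on{Max}(B)$ needs the finite-generation hypothesis built into $\mathcal V_{f.g}$, so that by the Nullstellensatz closed points are precisely algebra homomorphisms to $\cc$ and a homomorphism out of $A\otimes_\cc B$ is a pair of such.

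For the cohomological direct sum I would use that the module category of a product ring $R(V_1)\times R(V_2)$ is the product of the two module categories. A maximal ideal $\mathfrak m_x$ of $R(V_1\oplus V_2)_0\cong R(V_1)_0\times R(V_2)_0$ lies over a single factor, say $x=x_1$, so $\cc_x$ is the pair $(\cc_{x_1},0)$, a minimal resolution is $(P_\bullet,0)$, and $\on{Ext}^{*}_{R(V_1)\times R(V_2)}(\cc_x,\cc_x)\cong\on{Ext}^{*}_{R(V_1)}(\cc_{x_1},\cc_{x_1})$ as graded algebras. Passing to the maximal commutative quotient commutes with this isomorphism, so $H_x^{*}(R(V_1\oplus V_2))^{ab}\cong H_{x_1}^{*}(R(V_1))^{ab}$, which gives both the scheme and the variety direct-sum formulas at once.

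The tensor product of cohomological schemes is the heart of the argument and its only genuinely delicate point. The locality hypothesis guarantees that $R(V_1)\otimes R(V_2)$ is local with residue field $\cc$ and that $\cc_{(x_1,x_2)}\cong\cc_{x_1}\otimes\cc_{x_2}$, so taking minimal free resolutions $P_\bullet\to\cc_{x_1}$, $Q_\bullet\to\cc_{x_2}$ and forming $P_\bullet\otimes_{\cc}Q_\bullet$ yields a resolution of $\cc_{x_1}\otimes\cc_{x_2}$; the Künneth theorem over the field $\cc$ (Cartan--Eilenberg) then produces an isomorphism of graded algebras $H_{(x_1,x_2)}^{*}(R(V_1)\otimes R(V_2))\cong A\otimes_{\mathrm{gr}}B$, where $A=H_{x_1}^{*}(R(V_1))$, $B=H_{x_2}^{*}(R(V_2))$, and $\otimes_{\mathrm{gr}}$ carries the Koszul signs of the total differential. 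What remains, and where the difficulty concentrates, is to compare $(A\otimes_{\mathrm{gr}}B)^{ab}$ with $A^{ab}\otimes B^{ab}$. Because $[a\otimes 1,1\otimes b]=(1-(-1)^{|a||b|})\,a\otimes b$, forming the largest commutative quotient annihilates every product of two odd-degree classes, so $(A\otimes_{\mathrm{gr}}B)^{ab}$ is $A^{ab}\otimes B^{ab}$ modulo the ideal generated by $A^{ab}_{\mathrm{odd}}\cdot B^{ab}_{\mathrm{odd}}$. The even-degree classes are central and tensor without signs, and in the cases of interest they alone carry the reduced structure; I would therefore argue that this offending ideal lies in the nilradical -- which is what happens once odd squares vanish, e.g. when $R(V_i)$ has no quadratic relations -- so that it disappears upon taking $\on{Max}$ (and the underlying reduced space of $\on{Spec}$), producing $X^{!}_{V_1\otimes V_2}\cong X^{!}_{V_1}\times X^{!}_{V_2}$ via mutually inverse maps built from the universal property of the abelianization. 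Controlling these odd classes, and pinning down how much of the finer scheme-level statement survives them, is the one step I expect to require real care, and it rests on the structural analysis of graded-commutative Ext algebras.
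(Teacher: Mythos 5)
Most of your proposal coincides with the paper's own argument: the two isomorphisms $R(V_1\oplus V_2)\cong R(V_1)\oplus R(V_2)$ and $R(V_1\otimes V_2)\cong R(V_1)\otimes R(V_2)$ (their Lemma~\ref{lem:R(V)tensor}, proved exactly as you do, from $C_2(V_1\oplus V_2)=C_2(V_1)\oplus C_2(V_2)$ and from the expansion of $(v_1\otimes v_2)_{-2}$), followed by applying $\on{Spec}$ and $\on{Max}$; for the direct sum the paper splits Yoneda $n$-fold extensions over $R(V_1)\oplus R(V_2)$ into summands and observes that the $R(V_2)$-component is equivalent to zero, which is your product-of-module-categories argument in different clothing; and for the tensor product both you and the paper use locality to get minimal free resolutions and invoke the K\"unneth/wedge product of MacLane. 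Up to this point your route and the paper's are essentially identical.

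The one genuine gap is the final step, and you have located the delicate point precisely, but your proposed repair fails. You are right that the wedge product is an isomorphism of algebras onto the \emph{graded} tensor product $H^*_{x_1}\otimes_{\mathrm{gr}}H^*_{x_2}$ with Koszul signs (the paper instead asserts an unsigned algebra isomorphism and then uses the natural map $A_1^{ab}\otimes A_2^{ab}\to (A_1\otimes A_2)^{ab}$, which is valid only for the unsigned tensor product, so the signs are suppressed exactly where you worry about them). But your claim that the ideal generated by odd-by-odd products lies in the nilradical is false in general: take $V_1=V_2=L_{Vir}(c_{2,5},0)$, so $R(V_i)\cong \cc[x]/\langle x^2\rangle$ is local and $H^*_{x_i}\cong \cc[\alpha]$ with $\alpha$ of degree $1$ (Proposition~\ref{prop:Ext_Vir} with $r=2$, where $\alpha^2=\beta$ is a polynomial generator, hence not nilpotent), so $\alpha_1\otimes\alpha_2$ is not nilpotent in $\cc[\alpha_1]\otimes\cc[\alpha_2]$. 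Indeed the paper's own Theorem~\ref{thm:ci-ext-alg} applied to $\cc[x,y]/\langle x^2,y^2\rangle$ gives $\on{Ext}^*=\cc\langle \alpha_1,\alpha_2\rangle/\langle \alpha_1\alpha_2+\alpha_2\alpha_1\rangle$, whose abelianization is $\cc[\alpha_1,\alpha_2]/\langle \alpha_1\alpha_2\rangle$, with $\on{Max}$ a union of two coordinate lines rather than $\cc^2=X^!_{V_1,0}\times X^!_{V_2,0}$. So your signed bookkeeping, pursued consistently, does not merely leave a hole: it shows the tensor-product bullets require an extra hypothesis beyond locality (your nilradical argument does go through when odd squares vanish and there are finitely many odd generators, e.g.\ when the defining relations lie in $\frak m^3$ as in Corollary~\ref{cor:deg3-ci-Ext algebra}, and even then only at the level of $\on{Max}$, since a nonzero nilpotent ideal still changes the scheme $\widetilde X^!$).
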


We will need the following lemma:

\begin{lemma}\label{lem:R(V)tensor}
For vertex algebras $V_1$ and $V_2$, we have
\[R(V_1 \oplus V_2) \cong R(V_1) \oplus R(V_2) \quad \text{and} \quad  R(V_1 \otimes V_2) \cong R(V_1) \otimes R(V_2). \]
\end{lemma}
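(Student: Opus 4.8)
The plan is to reduce both isomorphisms to an explicit description of the subspaces $C_2(V_1\oplus V_2)\subseteq V_1\oplus V_2$ and $C_2(V_1\otimes V_2)\subseteq V_1\otimes V_2$, using the mode formulas for the direct sum and tensor product vertex algebra structures, and then to check compatibility with the products $\overline{v}\,\overline{u}=\overline{v_{-1}(u)}$.

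For the direct sum, the modes act componentwise, $(a_1,a_2)_n(b_1,b_2)=((a_1)_n b_1,(a_2)_n b_2)$, so in particular $(a_1,a_2)_{-2}(b_1,b_2)=((a_1)_{-2}b_1,(a_2)_{-2}b_2)$. Specializing $a_2=b_2=0$ and then $a_1=b_1=0$ shows $C_2(V_1\oplus V_2)=C_2(V_1)\oplus C_2(V_2)$. Passing to the quotient gives a linear isomorphism $R(V_1\oplus V_2)\cong R(V_1)\oplus R(V_2)$, and since the product $\overline{(a_1,a_2)}\cdot\overline{(b_1,b_2)}=\overline{((a_1)_{-1}b_1,(a_2)_{-1}b_2)}$ is componentwise, this is an isomorphism of algebras.

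For the tensor product, I would work with the surjective linear map $\Phi=\pi_1\otimes\pi_2:V_1\otimes V_2\to R(V_1)\otimes R(V_2)$ induced by the quotient maps $\pi_i:V_i\to R(V_i)$, whose kernel is $C_2(V_1)\otimes V_2+V_1\otimes C_2(V_2)$. The entire content is then the identity
\[C_2(V_1\otimes V_2)=C_2(V_1)\otimes V_2+V_1\otimes C_2(V_2),\]
after which $\Phi$ descends to a linear isomorphism $R(V_1\otimes V_2)\cong R(V_1)\otimes R(V_2)$. The two computational inputs are the mode formula $(u_1\otimes u_2)_n(v_1\otimes v_2)=\sum_{p+q=n-1}(u_1)_p v_1\otimes (u_2)_q v_2$, coming from $Y(u_1\otimes u_2,x)=Y_1(u_1,x)\otimes Y_2(u_2,x)$, together with the standard enlargement of $C_2$: writing $\mathcal D$ for the translation operator $\mathcal D a=a_{-2}\mathbf 1$, which satisfies $Y(\mathcal D a,x)=\tfrac{d}{dx}Y(a,x)$, i.e. $(\mathcal D a)_m=-m\,a_{m-1}$, an easy induction gives $a_{-n}b\in C_2(V)$ for every $n\ge 2$ (indeed $(\mathcal D a)_{-n}b=n\,a_{-n-1}b$ lies in $C_2(V)$ once all lower cases are known).

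With these in hand, the inclusion $\supseteq$ follows by evaluating $(a_1\otimes\mathbf 1_2)_{-2}(b_1\otimes v_2)$ and $(\mathbf 1_1\otimes a_2)_{-2}(v_1\otimes b_2)$: the vacuum modes $(\mathbf 1_i)_q=\delta_{q,-1}\mathrm{Id}$ collapse all but one summand and return $(a_1)_{-2}b_1\otimes v_2$ and $v_1\otimes (a_2)_{-2}b_2$, which span the two summands. For $\subseteq$, I expand $(u_1\otimes u_2)_{-2}(v_1\otimes v_2)=\sum_{p+q=-3}(u_1)_p v_1\otimes (u_2)_q v_2$; since $p+q=-3$ forces $p\le -2$ or $q\le -2$, each summand lies in $C_2(V_1)\otimes V_2+V_1\otimes C_2(V_2)$ by the enlarged-$C_2$ fact. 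Finally, to see the linear isomorphism is multiplicative, expand $(u_1\otimes u_2)_{-1}(v_1\otimes v_2)=\sum_{p+q=-2}(u_1)_p v_1\otimes (u_2)_q v_2$ and note that modulo $C_2(V_1)\otimes V_2+V_1\otimes C_2(V_2)$ only the term $p=q=-1$ survives, which maps to $(\overline{u_1}\,\overline{v_1})\otimes(\overline{u_2}\,\overline{v_2})$, precisely the product in $R(V_1)\otimes R(V_2)$. I expect the only genuine obstacle to be the bookkeeping in $\subseteq$: justifying that $a_{-n}b\in C_2(V)$ for all $n\ge 2$ and organizing the finitely many surviving modes so that each term of the expansion is placed in the correct summand.
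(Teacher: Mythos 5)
Your proposal is correct and follows essentially the same route as the paper's proof: componentwise modes for the direct sum, and for the tensor product the identity $C_2(V_1\otimes V_2)=C_2(V_1)\otimes V_2+V_1\otimes C_2(V_2)$, proved via the expansion $\sum_{p+q=-3}(u_1)_pv_1\otimes(u_2)_qv_2$ for one inclusion and insertions of $a_1\otimes\mathbf 1_2$ and $\mathbf 1_1\otimes a_2$ for the other. You merely make explicit two points the paper leaves implicit --- the fact that $a_{-n}b\in C_2(V)$ for all $n\ge 2$ (stated without proof in Section~2 of the paper, which you derive from the translation operator) and the multiplicativity check via the $p+q=-2$ expansion, which the paper dismisses as ``direct computations.''
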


\begin{proof}
Let $V_1$, $V_2$ be vertex algebras. By definition of the direct sum of vertex algebras, for $a, b \in V_1$ and $c, d \in V_2$, we have $(a, c)_{-2}(b, d)=(a_{-2}b, c_{-2}d)$. Therefore we have $C_2(V_1 \oplus V_2)=C_2(V_1) \oplus C_2(V_2)$. There is thus an isomorphism of vector spaces between $R(V_1 \oplus V_2)$ and $R(V_1) \oplus R(V_2)$. It is straightforward to check that it is an isomorphism of algebras.

The tensor product $V_1 \otimes V_2$ is a vertex algebra with $\mathbf{1}=\mathbf{1}_1 \otimes \mathbf{1}_2$ and for $v_i \in V_i$, we have $(v_1 \otimes v_2)_n=\sum_{k \in \mathbb{Z}}(v_1)_k \otimes (v_2)_{n-k-1}$. It follows that for $a \otimes b \in V_1 \otimes V_2$, we have
\[ (v_1 \otimes v_2)_{-2}(a \otimes b)=\sum_{k \in \mathbb{Z}}(v_1)_ka \otimes (v_2)_{-k-3}b .\]
But if $k \geq -1$, then $(v_2)_{-k-3}b \in C_2(V_2)$, and if $k \leq -2$, then $(v_1)_k a \in C_2(V_1)$. Hence $C_2(V_1 \otimes V_2) \subseteq C_2(V_1) \otimes V_2+ V_1 \otimes C_2(V_2)$. 

If $a, b \in V_1$ and $c, d \in V_2$, we have
\begin{align*}
\begin{array}{l}
a_{-2}b \otimes c =(a \otimes \mathbf{1}_{2})_{-2}(b \otimes c) \in C_2(V_1 \otimes V_2), \\[5pt]
b \otimes c_{-2}d =(\mathbf{1}_1 \otimes c)_{-2}(b \otimes d) \in C_2(V_1 \otimes V_2).
\end{array}
\end{align*}
As $C_2(V_1 \otimes V_2)$ is a vector space, it follows that $C_2(V_1) \otimes V_2+ V_1 \otimes C_2(V_2) \subseteq C_2(V_1 \otimes V_2)$, and so $C_2(V_1 \otimes V_2) = C_2(V_1) \otimes V_2+ V_1 \otimes C_2(V_1)$. We obtain an isomorphism of vector spaces between $R(V_1 \otimes V_2)$ and $R(V_1) \otimes R(V_2)$, and direct computations show that it is an isomorphism of algebras.
\end{proof}
It is mentioned in \cite{Frenkel-Huang-Lepowsky} that the vertex algebra $ V_1\oplus V_2$ together with the projections $ V_1\oplus V_2 \to V_i$ is the product in the category of vertex algebras. Thus the functor $R$ sends finite product to finite product. 

\begin{proof}[Proof of Theorem~\ref{XvandXv!}]
If  $V_1, V_2$ are  two vertex algebras in $\mathcal{V}$ (resp. in $\mathcal{V}_{f.g}$). Then $ V_1\oplus V_2$ and $ V_1\otimes V_2$ are also in $ \mathcal V $ (resp. in $\mathcal{V}_{f.g}$).   Applying the functor $\on{spec}$ and $\on{Max}$  to the isomorphisms in Lemma~\ref{lem:R(V)tensor}, we obtain the top four isomorphisms of the theorem.  

Set $V_1, V_2 \in \mathcal{VOA}_{f.g}$ and choose $\mathfrak{m}_{x_1} \in \on{Spec}(R(V_1)_0)$, $\mathfrak{m}_{x_2} \in \on{Spec}(R(V_2)_0)$. As $R(V_1 \oplus V_2) \cong R(V_1) \oplus R(V_2)$, we see that $R(V_1 \oplus V_2)_0 \cong R(V_1)_0 \oplus R(V_2)_0$. Therefore there are two natural actions of $R(V_1 \oplus V_2)$ on $\cc$: $R(V_1 \oplus V_2) \longrightarrow R(V_1)_0 \oplus R(V_2)_0 \stackrel{/\mathfrak{m}_{x_1}}{\longrightarrow} \cc_{x_1}$ and $R(V_1 \oplus V_2) \longrightarrow R(V_1)_0 \oplus R(V_2)_0 \stackrel{/\mathfrak{m}_{x_2}}{\longrightarrow} \cc_{x_2}$. We choose the action coming from $\mathfrak{m}_{x_1}$. 

Let $S \in H_{x_1}^*(R(V_1 \oplus V_2))$ be an $n$-fold exact sequence, i.e. $S=0 \longrightarrow \cc_{x_1} \longrightarrow S_{n-1} \xrightarrow{\partial_{n-1}} \dots \xrightarrow{\partial_1} S_0 \longrightarrow \cc_{x_1} \longrightarrow 0$, where the $S_j$ are $R(V_1 \oplus V_2)$-modules. As $R(V_1)$ and $R(V_2)$ are unital algebras, each module $S_i$ decomposes as $S_{i, 1} \oplus S_{i, 2}$ with $S_{i, j}$ an $R(V_j)$-module. We can see that each morphism $\partial_{i}$ can be decomposed as $\partial_{i, 1}+\partial_{i, 2}$ with $\partial_{i, j}: S_{i, j} \longrightarrow S_{i-1, j}$ a morphism of $R(V_j)$-modules. We can thus construct two $n$-fold exact sequences
\[
S^1=0 \longrightarrow \cc_{x_1} \longrightarrow S_{n-1, 1} \xrightarrow{\partial_{n-1, 1}} \dots \xrightarrow{\partial_{1, 1}} S_{0, 1} \longrightarrow \cc_{x_1} \longrightarrow 0,
\]
and
\[
S^2=0 \longrightarrow \{0\} \longrightarrow S_{n-1, 2} \xrightarrow{\partial_{n-1, 2}} \dots \xrightarrow{\partial_{1, 2}} S_{0, 2} \longrightarrow \{0\} \longrightarrow 0,
\]
with $S^1$ a sequence of $R(V_1)$-modules, $S^2$ a sequence of $R(V_2)$-modules, and such that $S=S^1 \oplus S^2$. But as $S^2$ starts and ends with $\{0\}$, it is equivalent to the zero sequence, and thus $S^2=0$. So $H_{x_1}^n(R(V_1 \oplus V_2)) \subseteq H_{x_1}^n(R(V_1))$. The reverse inclusion is true because an $n$-fold extension of $R(V_1)$-modules can be seen as a sequence of $R(V_1 \oplus V_2)$-modules where $R(V_2)$ acts trivially. As it is true for all $n \in \mathbb{N}$, we obtain that $H_{x_1}^{*}(R(V_1 \oplus V_2)) = H_{x_1}^{*}(R(V_1))$. If we start the same reasoning by looking at the action on $\cc_{x_2}$, we obtain $H_{x_2}^{*}(R(V_1 \oplus V_2)) = H_{x_2}^{*}(R(V_2))$. 

From what we have determined above, we now have $H_{x_1}^{*}(R(V_1 \oplus V_2))^{ab} = H_{x_1}^{*}(R(V_1))^{ab}$ and $H_{x_2}^{*}(R(V_1 \oplus V_2))^{ab} = H_{x_2}^{*}(R(V_2))^{ab}$. By taking the prime spectrum and maximal spectrum, we obtain the desired result.

Set $V_1, V_2 \in \mathcal{VOA}_{f.g}$ and define $\mathfrak{m}_{x_1} \in \on{Spec}(R(V_1)_0)$, $\mathfrak{m}_{x_2} \in \on{Spec}(R(V_2)_0)$. We know that $R(V_1 \otimes V_2) \cong R(V_1) \otimes R(V_2)$, so $R(V_1 \otimes V_2)$ acts naturally on $\cc$ through $R(V_1 \otimes V_2) \longrightarrow R(V_1)_0 \otimes R(V_2)_0 \longrightarrow \cc_{x_1} \otimes \cc_{x_2}=\cc_{(x_1,x_2)}$.
\delete{, where the last morphism is the quotient by $\mathfrak{m}_{x_1} \otimes R(V_2)_0 + R(V_1)_0 \otimes \mathfrak{m}_{x_2}$.  As the tensor product is over the complex field, it follows that the last tensor product is isomorphic to $\cc$ and we denote it by $\cc_{(x_1, x_2)}$. }

Using K\"unneth tensor product formula, one can directly determine that the tensor product of the minimal projective resolutions of $\cc_{x_1}$ as $R(V_1)$-module and of $\cc_{x_2}$ as $R(V_2)$-module is a minimal projective resolution of $\cc_{(x_1, x_2)}$ as an $R(V_1 \otimes V_2)$-modules. We now look at the wedge product (cf. \cite[\textrm{VIII}.4]{MacLane}): 
\begin{align*}
\psi: H_{x_1}^*(R(V_1)) \otimes H_{x_2}^*(R(V_2)) \longrightarrow H_{(x_1, x_2)}^*(R(V_1\otimes V_2)).
\end{align*}
Assuming $R(V_1)$ and $R(V_2)$ are local algebras, there exists a minimal free resolution $X_1$ (respectively $X_2$) of $\cc_{x_1}$ as $R(V_1)$-module (respectively $\cc_{x_2}$ as $R(V_2)$-module) (cf. \cite[Chapter 19]{Eisenbud}). We then use the facts that $X_1 \otimes X_2$ is a minimal projective resolution of $\cc_{(x_1, x_2)}$ as $R(V_1 \otimes V_2)$-module, and that each space $X_j^n$, $j=1,2$, has a basis to compute $\psi$ and see that it is an isomorphism of algebras. 

\delete{Set $\mathcal{I}_{x_1}$ and $\mathcal{I}_{x_2}$ the sets of ideals defining the inverse systems for $H_{x_1}^*(R(V_1))$ and $H_{x_2}^*(R(V_2))$. If $I_1 \in \mathcal{I}_{x_1}$ and $I_2 \in \mathcal{I}_{x_2}$, then the ideal $(I_1 , I_2)=I_1 \otimes H_{x_2}^*(R(V_2))+ H_{x_1}^*(R(V_1))\otimes I_2$ verifies: 
\begin{align*}
\left(H_{x_1}^*(R(V_1)) \otimes H_{x_2}^*(R(V_2)) \right)/(I_1 ,I_2) \cong H_{x_1}^*(R(V_1))/I_1 \otimes H_{x_2}^*(R(V_2))/I_2,
\end{align*}
which is commutative. By denoting $\mathcal{I}_{(x_1, x_2)}$ the set of ideals defining the inverse systems for $H_{(x_1, x_2)}^*(R(V_1 \otimes V_2))$, we see that $(I_1, I_2) \in \mathcal{I}_{(x_1, x_2)}$. Let us set
\begin{align*}
\begin{array}{l}
L=[H_{x_1}^*(R(V_1)) \otimes H_{x_2}^*(R(V_2)), H_{x_1}^*(R(V_1)) \otimes H_{x_2}^*(R(V_2))] \\[5pt]
\quad =[H_{x_1}^*(R(V_1)), H_{x_1}^*(R(V_1))] \otimes H_{x_2}^*(R(V_2))+H_{x_1}^*(R(V_1)) \otimes [H_{x_2}^*(R(V_2)), H_{x_2}^*(R(V_2))].
\end{array}
\end{align*}
It is clear that any ideal in $J \in \mathcal{I}_{(x_1, x_2)}$ contains $L$. Hence for all $J \in \mathcal{I}_{(x_1, x_2)}$, we have $L  \succcurlyeq J$, and so any element $a_J$ in the inverse limit of $H_{(x_1, x_2)}^*(R(V_1 \otimes V_2))$ is the image of some $a_L$. As $L \in \mathcal{I}_{x_1} \times \mathcal{I}_{x_2}$, we obtain an isomorphism:
\begin{align*}
H_{(x_1, x_2)}^*(R(V_1 \otimes V_2))^{ab}  \longrightarrow  \underset{(I_1, I_2) \in \mathcal{I}_{x_1} \times \mathcal{I}_{x_2}}{\varprojlim} \left( H_{x_1}^*(R(V_1))/I_1 \otimes H_{x_2}^*(R(V_2))/I_2 \right).
\end{align*}
If $(I_1 , I_2) \subseteq (I_1' , I_2')$, then $I_1 \subseteq I_1'$ and $I_2 \subseteq I_2'$. The reciprocal is also true. So $(I_1 , I_2) \succcurlyeq (I_1' , I_2')$ is equivalent to $I_1  \succcurlyeq I_1'$ and $ I_2 \succcurlyeq  I_2'$. It follows that the transition maps are compatible and so we can define an isomorphism:
\begin{align*}
\begin{array}{ccc}
 \underset{(I_1 , I_2) \in \mathcal{I}_{x_1} \times \mathcal{I}_{x_2}}{\varprojlim} \left( H_{x_1}^*(R(V_1))/I_1 \otimes H_{x_2}^*(R(V_2))/I_2 \right) & \longrightarrow & H_{x_1}^{*}(R(V_1))^{ab} \otimes H_{x_2}^{*}(R(V_2))^{ab} \\
 (a_{I_1} \otimes b_{I_2})_{(I_1 , I_2) \in \mathcal{I}_{x_1} \times \mathcal{I}_{x_2}} & \longmapsto &  (a_{I_1})_{I_1 \in \mathcal{I}_{x_1}} \otimes  (b_{I_2})_{I_2 \in \mathcal{I}_{x_2}}.
\end{array}
\end{align*}}
It is straightforward to verify using the universal property that,  for any two associative  $ \cc$-algebras  $A_1$ and $A_2$, there is a natural isomorphism of $\cc$-algebras $ A_1^{ab}\otimes A_2^{ab}\stackrel{\sim}{\to}(A_1 \otimes A_2)^{ab}$. Thus we have 
\begin{align*}
H_{(x_1, x_2)}^*(R(V_1 \otimes V_2))^{ab}  \stackrel{\cong}{\longrightarrow} H_{x_1}^{*}(R(V_1))^{ab} \otimes H_{x_2}^{*}(R(V_2))^{ab}.
\end{align*}
The prime spectrum and maximal spectrum transform the tensor product into a direct product, and the desired result follows.
\end{proof}

Let $V$ be an $\mathbb{N}$-graded vertex operator algebra with $V_0=\cc \mathbf{1}$ and $R(V)$ finitely generated. Arakawa showed (\cite{Arakawa1}) that $V$ is finitely strongly generated and thus, that $\on{dim}_\cc \ X_V=0$ if and only if $V$ is $C_2$-cofinite. As $X_V$ is conical, if $X_V$ contains a point different from $0$ it must then contain a line. Thus the dimension of $X_V$ is $0$ if and only if $X_V=\{0\}$. We would like to obtain a similar description of $X_{V, x}^{!}$. 

Let $V$ be an $\mathbb{N}$-graded vertex operator algebra with finitely generated $R(V)$. Assume that $R(V)$ has finite global dimension and choose $\mathfrak{m}_x \in \on{Max}(R(V)_0)$. Then $\cc_x$ has finite projective dimension in $R(V)\Mod$, and so $H_x^{*}(R(V))= \bigoplus_{n=0}^kH_x^{n}(R(V))$ for some $k \in \mathbb{N}$. It follows that all elements in $H_x^{*}(R(V))_{\geq 1}$ are nilpotent.\delete{and thus $H_x^{*}(R(V))_{\geq 1} \subseteq \on{Rad}(H_x^{*}(R(V)))$. However} 
Since $H_x^{*}(R(V))/H_x^{*}(R(V))_{\geq 1}  \cong H_x^{0}(R(V))= \cc$, \delete{the ideal $H_x^{*}(R(V))_{\geq 1}$ is maximal in $H_x^{*}(R(V))$. It follows that $\on{Rad}(H_x^{*}(R(V)))$ is a maximal ideal, and it is the unique maximal ideal of $H_x^{*}(R(V))$.} thus  $H_x^{*}(R(V))^{ab}$ is also local and $X_{V, x}^{!}=\{0\}$. 
\delete{, and so for any $I \in \mathcal{I}_x$, we have $\on{Max}(H_x^{*}(R(V))/I)=\{0\}$, and finally } 

\delete{We summarize the behaviour of $X_V$ and $X_{V, x}^{!}$ below. If $R(V)$ is finitely generated, then:
\[
  \begin{tikzpicture}[scale=0.9,  transform shape]
  \tikzset{>=stealth}
\node (1) at ( 0,0){$R(V)$ has finite dimension};
\node (2) at ( 6,0){$X_V=\{0\}$};
\node (3) at ( 0,-3){$R(V)$ has finite global dimension};
\node (4) at ( 6,-3) {$X_{V, x}^{!}=\{0\}$};
\node (5) at ( 9.2,-3) {for all $\mathfrak{m}_x \in \on{Max}(R(V)_0)$};
\node (6) at ( 1.5,-1.5){dual statements};
\node (7) at ( 7.5,-1.5){dual statements};

\draw [decoration={markings,mark=at position 1 with
    {\arrow[scale=1.2,>=stealth]{>}}},postaction={decorate}] (1)  --  (3);
    \draw [decoration={markings,mark=at position 1 with
    {\arrow[scale=1.2,>=stealth]{>}}},postaction={decorate}] (3)  --  (1);

\draw [decoration={markings,mark=at position 1 with
    {\arrow[scale=1.2,>=stealth]{>}}},postaction={decorate}] (2)  --  (4);
    \draw [decoration={markings,mark=at position 1 with
    {\arrow[scale=1.2,>=stealth]{>}}},postaction={decorate}] (4)  --  (2);

\draw[implies-implies,double equal sign distance] (1) -- (2);
\draw[-implies,double equal sign distance] (3) -- (4);

\end{tikzpicture}
\]}

\begin{remarks}
(1) Notice that on the second line we only have an implication and not an equivalence. It is not yet clear if $X_{V, x}^{!}=\{0\}$ implies that $R(V)$ has finite global dimension. 

(2) If $V$ is a vertex operator algebra of CFT type,  then the $C_2$-algebra is a positively graded, augmented algebra with $R(V)_0=\cc$. Thus the affine point $\{0\}$ corresponds to the kernel of the augmentation map $R(V) \longrightarrow \cc$. Likewise, the cohomology ring $H^{*}(R(V))$ is here also positively graded and augmented. \delete{so the point $\{0\}$ corresponds to the kernel of the augmentation map $H^{*}(R(V)) \longrightarrow \cc_x$. }
\end{remarks}

The next proposition illustrates that the behaviours of $X_V$ and $X_{V, x}^{!}$ are not free:

\begin{proposition}
Let $A$ be a finitely generated, commutative algebra over $\cc$ with augmentation $\epsilon: A \longrightarrow \cc$, such that $H^*(A)$ is finitely generated by homogeneous elements. Furthermore, let us assume that $\on{dim}_\cc \on{Ker} \epsilon / (\on{Ker} \epsilon)^2 >0$. Let $\{\alpha_1, \dots, \alpha_n\}$ be an ordered set of the generators of $H^*(A)$. We also assume that the product in $H^*(A)$ is such that any element of $H^*(A)$ can be written as a linear combination of ordered monomials $\alpha_1^{i_1} \dots \alpha_n^{i_n}$. Finally, we assume that $H^{*}(A)/\on{Rad}(H^{*}(A))$ is a commutative algebra. Then:
\begin{itemize}\setlength{\itemsep}{5pt}
\item if $\on{Max}(A)=\{pt\}$ then $\on{Max}(H^{*}(A)^{ab}) \neq \{pt\}$,
\item if $\on{Max}(H^{*}(A)^{ab})=\{pt\}$ then $\on{Max}(A)\neq \{pt\}$.
\end{itemize}
\end{proposition}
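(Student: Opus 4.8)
The plan is to notice first that the two bullets are logically equivalent: writing $P$ for ``$\on{Max}(A)=\{pt\}$'' and $Q$ for ``$\on{Max}(H^*(A)^{ab})=\{pt\}$'', the first bullet asserts $P\Rightarrow\neg Q$ while the second asserts $Q\Rightarrow\neg P$, so the second is exactly the contrapositive of the first and it suffices to prove the first. Throughout I abbreviate $R=H^*(A)$ and record the standing facts: $R$ is connected $\mathbb N$-graded with $R_0=\on{Ext}^0_A(\cc,\cc)=\cc$, the generators $\alpha_1,\dots,\alpha_n$ may be taken homogeneous of positive degree, and the commutator ideal $\langle[R,R]\rangle$ is homogeneous, so $R^{ab}=R/\langle[R,R]\rangle$ is again connected graded commutative and finitely generated. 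For such an algebra, $\on{Max}(R^{ab})=\{pt\}$ holds if and only if $R^{ab}$ is finite dimensional, since the vertex $R^{ab}_+$ is always a closed point and is the only one exactly when the Krull dimension vanishes. Hence the whole statement reduces to showing: \emph{if $\on{Max}(A)=\{pt\}$ then $R^{ab}$ is infinite dimensional over $\cc$.}

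Next I would establish the infinite dimensionality of $R$ itself. Since $A$ is a finitely generated commutative $\cc$-algebra whose maximal spectrum is a single point, $A$ is Artinian and local with unique maximal ideal $\mathfrak m=\on{Ker}\epsilon$; the hypothesis $\dim_\cc \mathfrak m/\mathfrak m^2>0$ forces $\mathfrak m\neq 0$ by Nakayama, so $A\neq\cc$ and $A$ is not regular. By the Auslander--Buchsbaum--Serre theorem the trivial module $\cc$ then has infinite projective dimension, whence $\on{Ext}^i_A(\cc,\cc)\neq 0$ for infinitely many $i$ and $R$ is infinite dimensional. I would then invoke the ordered-monomial hypothesis: if every generator $\alpha_j$ were nilpotent, say $\alpha_j^{N_j}=0$, then only the finitely many ordered monomials $\alpha_1^{i_1}\cdots\alpha_n^{i_n}$ with each $i_j<N_j$ could be nonzero, and these span $R$, contradicting $\dim_\cc R=\infty$. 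Therefore some generator $\alpha_{j_0}$, homogeneous of positive degree $d_0$, is non-nilpotent in $R$.

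The heart of the argument, and the step I expect to be the main obstacle, is propagating this non-nilpotent generator through the abelianization, i.e. controlling the interplay between $\on{Rad}(R)$ (where the commutators lie) and the grading. Here I would prove a small lemma on connected graded algebras: \emph{if $r\in R_+$ is homogeneous of positive degree and $1-r$ is a unit of $R$, then $r$ is nilpotent} --- expanding a homogeneous inverse $u=\sum_i u_i$ of $1-r$ and comparing degrees gives $u_{kd_0}=r^{k}$ with all other components zero, so the finiteness of $u$ forces $r^{k}=0$ for large $k$. Applying this with $r=\alpha_{j_0}^{k}$: were $\alpha_{j_0}^{k}\in\on{Rad}(R)$, then $1-\alpha_{j_0}^{k}$ would be a unit, forcing $\alpha_{j_0}$ nilpotent, a contradiction; hence $\alpha_{j_0}^{k}\notin\on{Rad}(R)$ for every $k$. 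Since $R/\on{Rad}(R)$ is commutative by hypothesis, we have $\langle[R,R]\rangle\subseteq\on{Rad}(R)$, so $\alpha_{j_0}^{k}\notin\langle[R,R]\rangle$, meaning the image $\overline{\alpha_{j_0}}\in R^{ab}$ satisfies $\overline{\alpha_{j_0}}^{\,k}\neq 0$ for all $k$. As these powers occupy the distinct degrees $kd_0$ of the graded algebra $R^{ab}$, they are linearly independent, so $R^{ab}$ is infinite dimensional. By the reduction of the first paragraph this yields $\on{Max}(R^{ab})\neq\{pt\}$, proving the first bullet and, by contraposition, the second.
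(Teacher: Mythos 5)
Your proposal is correct, and its skeleton matches the paper's: both arguments hinge on (i) showing $H^*(A)$ is infinite dimensional when $\on{Max}(A)=\{pt\}$, and (ii) the graded-unit computation (a homogeneous inverse of $1-r$ forces $u_{kd_0}=r^k$, hence nilpotence of $r$) combined with the hypothesis that $H^*(A)/\on{Rad}(H^*(A))$ is commutative, which places the commutator ideal inside the Jacobson radical. But you diverge genuinely at two points. For step (i), the paper first proves $A$ is finite dimensional, observes that $\dim_\cc H^1(A)>0$ puts loops on the unique vertex of the Ext quiver, invokes Igusa's no-loops theorem to get $\on{gl.dim}A=\infty$, and passes through the fact that global dimension is the supremum of projective dimensions of simples; you instead apply the Auslander--Buchsbaum--Serre criterion to the Noetherian local ring $A$ (Artinian local and not a field, hence not regular, hence $\on{proj.dim}_A\cc=\infty$). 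This is cleaner in the present commutative setting and spares you both the finite-dimensionality of $A$ and the representation-theoretic machinery, at the cost of an argument that would not extend to a noncommutative augmented $A$ the way the Igusa route would. For step (ii), you run the paper's Lemma in contrapositive form: rather than assuming $H^*(A)^{ab}$ local and deducing (via $\on{Rad}=H^*(A)_+$ and nilpotence of all positive homogeneous elements plus the ordered-monomial spanning) that $H^*(A)$ is finite dimensional, you extract a non-nilpotent generator $\alpha_{j_0}$ from the spanning hypothesis, show its powers avoid $\on{Rad}(H^*(A))\supseteq\langle[H^*(A),H^*(A)]\rangle$, and conclude that the powers $\overline{\alpha_{j_0}}^{\,k}$, sitting in distinct degrees of the connected graded algebra $H^*(A)^{ab}$, make it infinite dimensional; your reduction in the first paragraph (single-point $\on{Max}$ for a connected graded finitely generated commutative algebra is equivalent to finite dimensionality) then closes the argument. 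The mathematical content of this endgame is the same as the paper's Lemma, but your forward formulation produces an explicit infinite family of nonzero classes in the abelianization, and your opening observation that the two bullets are contrapositives of one another makes the logical structure more transparent than the paper's joint proof by contradiction.
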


\begin{proof}
Let $A$ be as in the proposition and assume that $\on{Max}(A)=\{pt\}=\on{Max}(H^{*}(A)^{ab})$. Note that every finitely generated commutative local $\cc$-algebra $A$ is finite dimensional. Indeed, since  $A$ is commutative and finitely generated over $\cc$, $ A$  is a Jacobson ring and  its Jacobson radical is equal to its nilradical. 
As $A$ is local, its unique maximal ideal is equal to its nilradical, and so each generator is nilpotent. Because $A$ is finitely generated, it is then also finite dimensional. 

The locality of $A$ implies the existence of a unique simple $A$-module. Thus the Ext quiver $Q_{A}$ of $A$ has a unique vertex. Furthermore, the number of loops on this vertex is $\on{dim}_\cc  H^1(A)$, which is not zero because it is equal to the minimal number of generators of $A$, i.e. $\on{dim}_\cc \on{Ker} \epsilon / (\on{Ker} \epsilon)^2>0$. 

However, it was proved by K. Igusa (\cite{Igusa}) that if a finite dimensional algebra has finite global dimension, then its Ext quiver has no loops. As we just saw $Q_{A}$ has loops, it implies that $\text{gl.dim }A=+\infty$. But we also know that the global dimension of a finite dimensional algebra is the maximum of the projective dimensions of its simple modules (\cite[A.4 Theorem 4.8]{Assem-Simson-Skowronski}), and $\cc$ is the only simple $A$-module. It follows that $\text{proj.dim }\cc=+\infty$, and $H^*(A)$ is infinite dimensional.
Under the assumption, $H^{*}(A)^{ab}$ is a finitely generated  commutative $\cc$-algebra. Thus the assumption that  $H^{*}(A)^{ab}$ is a local algebra would imply that $ H^{*}(A)^{ab}$ is finite dimensional.  We want to show that, under  the additional assumption that $ H^*(A)/\on{Rad}(H^*(A)) $ commutative,  the algebra $H^{*}(A)$ is finite dimensional. This follows from the following lemma, and therefore contradicts the fact that $ H^*(A)$ is infinite dimensional.
\end{proof}
\begin{lemma} Let $ B=\oplus_{i\geq 0}B_i$ be a graded associative (not necessarily commutative) $\bb C$-algebra with a set of homogeneous generators $c_1, \dots, c_n$ of positive degrees such that $ B=\cc \on{-Span}\{ c_1^{l_1}c_2^{l_2}\cdots c_n^{l_n}\;|\; (l_1, \dots, l_n)\in \bb N^{n}\}$ (with a fixed order). If  $B/\langle [B, B]\rangle $ is local and $ B/\on{Rad}(B)$ is commutative, then $ B$ is finite dimensional. 
\end{lemma}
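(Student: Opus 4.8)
The plan is to show that the two hypotheses together force the entire augmentation ideal $B_{\geq 1}=\bigoplus_{i\geq 1}B_i$ to lie inside the Jacobson radical $\on{Rad}(B)$, and then to use the grading to upgrade membership in the radical to honest nilpotency of each generator $c_j$; once every $c_j$ is nilpotent, only finitely many of the spanning ordered monomials can survive.

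First I would record that $B$ is positively graded with $B_0=\cc\mathbf 1$ (the empty monomial), so $B$ is augmented and the two-sided ideal generated by $c_1,\dots,c_n$ is exactly $B_{\geq 1}$. Since commutators of homogeneous elements are homogeneous, $\langle[B,B]\rangle$ is a homogeneous ideal and $B^{ab}=B/\langle[B,B]\rangle$ is a graded, \emph{finitely generated}, commutative $\cc$-algebra, generated by the images $\bar c_j$. The first hypothesis says $B^{ab}$ is local, so by the Jacobson-ring/Nullstellensatz fact already invoked in the proof of the preceding Proposition (a finitely generated commutative local $\cc$-algebra is finite dimensional), each $\bar c_j$ is nilpotent and hence lies in the unique maximal ideal of $B^{ab}$.

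Next I would bring in the second hypothesis, that $B/\on{Rad}(B)$ is commutative: this gives $\langle[B,B]\rangle\subseteq\on{Rad}(B)$ and thus a surjection $B^{ab}\twoheadrightarrow B/\on{Rad}(B)$. A quotient of the local ring $B^{ab}$ is local, while $B/\on{Rad}(B)$ has zero Jacobson radical by construction; in a local ring the radical is the maximal ideal, so $B/\on{Rad}(B)$ is a field, and being finite dimensional over $\cc$ (as a quotient of the finite-dimensional $B^{ab}$) it equals $\cc$. Hence $\on{Rad}(B)=\ker\epsilon$ for an augmentation $\epsilon:B\to\cc$. Because each $\bar c_j$ is nilpotent in $B^{ab}$, it maps to $0$ under $B^{ab}\to\cc$, so $\epsilon(c_j)=0$ for all $j$; therefore $B_{\geq 1}=(c_1,\dots,c_n)\subseteq\on{Rad}(B)$, and comparing codimensions ($B/B_{\geq 1}\cong\cc\cong B/\on{Rad}(B)$) forces $\on{Rad}(B)=B_{\geq 1}$.

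The engine is then the standard grading trick: for any homogeneous $x\in B_i$ with $i\geq 1$ we have $x\in\on{Rad}(B)$, so $1-x$ is a unit; writing its inverse as a finite sum of homogeneous components $u=\sum_m u_m$ and matching degrees in $(1-x)u=1$ gives $u_0=1$, $u_m=0$ for $i\nmid m$, and $u_{ki}=x^k$, so the finiteness of the support of $u$ forces $x^k=0$ for large $k$. In particular each generator is nilpotent, say $c_j^{M_j+1}=0$. Since $B$ is spanned by the ordered monomials $c_1^{l_1}\cdots c_n^{l_n}$ and any such monomial with some $l_j\geq M_j+1$ contains the vanishing block $c_j^{M_j+1}$, only the finitely many monomials with $0\leq l_j\leq M_j$ can be nonzero, whence $\dim_\cc B\leq\prod_{j}(M_j+1)<\infty$. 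I expect the main obstacle to be the middle step, namely extracting $\on{Rad}(B)=B_{\geq 1}$ from the two a priori unrelated hypotheses: this identity is precisely what licenses the grading trick, and the commutativity of $B/\on{Rad}(B)$ enters exactly to let one compare it against the local quotient $B^{ab}$.
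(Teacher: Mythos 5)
Your proof is correct and follows essentially the same route as the paper's: identify $\on{Rad}(B)=B_{\geq 1}$, use the graded-inverse trick on $1-x$ to force nilpotency of every positive-degree homogeneous element, and then bound $\dim_\cc B$ by the finitely many surviving ordered monomials $c_1^{l_1}\cdots c_n^{l_n}$. The only (harmless) bookkeeping difference is in how the radical is pinned down: you prove $B_{\geq 1}\subseteq\on{Rad}(B)$ bottom-up via nilpotency of the $\bar c_j$ in the finite-dimensional graded $B^{ab}$ and compare codimensions, whereas the paper first gets $\langle[B,B]\rangle\subseteq\on{Rad}(B)$ and concludes $B/\on{Rad}(B)=\cc$, hence $\on{Rad}(B)=B_{+}$, directly from the locality of $B^{ab}$.
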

\begin{proof}
Under the assumption, for any maximal (two sided) ideal $ \frak m$ of $ B$, $B/\frak m$ is a quotient of $ B/\on{Rad}(B)$ and thus a commutative algebra. Hence  $ \frak m\supseteq  \langle [B, B]\rangle $ and therefore $ \on{Rad}(B)\supseteq \langle [B, B]\rangle$.  Thus $B/\on{Rad}(B)$ is a quotient of $B^{ab}$. The locality of $ B^{ab}$ implies that $ B/\on{Rad}(B)=\cc$ and $\on{Rad}(B)=B_+=\oplus_{i>0}B_i$.  Thus for every homogeneous  $x \in B_{n}$ with $ n > 0$, $ 1-x$ is invertible in $ B$.  Let $b=\sum_{i=0}^s b_i\in B$ such that $ b_i \in B_i$ and $ (1-x)b=1$. 
Using the graded structure comparing the homogeneous components we get $ b_{j n}=x^j$ and $ b_i=0$ for other $ i\not\in n\bb N$. Therefore $ x^j=0$ when $j n>s$. Thus all $x\in B_n$ are nilpotent for all $ n>0$.  Finally, we use the assumption $ B=\cc\on{-Span}\{ c_1^{l_1}c_2^{l_2}\cdots c_n^{l_n}\;|\; (l_1, \dots, l_n)\in \bb N^{n}\}$ to conclude that $ B$ is finite dimensional. 
\end{proof} 

\begin{remark}
\delete{(1) The graded algebra structure is crucial. For example, let $ B=U(\frak g)$ be the universal enveloping algebra of a finite dimensional semisimple Lie algebra $\frak g$. The property $ [\frak g, \frak g]=\frak g$ implies that $U(\frak g)^{ab}=\cc$ and thus local. It also satisfies the $\cc\on{-span}$ condition by the PBW theorem. But $ U(\frak g)$ has no nonzero nilpotent element.  Of course,  in this case, $B/\on{Rad}(B)=U(\frak g)$ is not commutative (by noting that the adjoint representation on $\frak g$ induces a faithful $U(\frak g)$-module structure on $ \on{Sym}(\frak g)$ by the PBW theorem). }
 
The hypothesis on the generators of $H^*(A)$ means that the Yoneda product behaves relatively well. Indeed, we might not have a clear commutation rule, but we can at least order the generators. These conditions will be proved to hold when $ A$ is a complete intersection. This will discussed in \cite{Caradot-Jiang-Lin_2}.
\end{remark}

\begin{remarks}\label{rem:3.4} 
(1)  We see that in the context of a vertex operator algebra $V$, we can apply the above proposition to $R(V)$. If $R(V)$ abides by the hypothesis of the proposition, then we cannot have $X_V=\{0\}$ and $X_{V, x}^{!}=\{0\}$. 

(2)  The classical examples of non-$C_2$-cofinite vertex operator algebras are the Heisenberg vertex operator algebra $V_{\hat{\mathfrak{h}}}(k,0)$, the universal affine vertex operator algebra $V_{\hat{\mathfrak{g}}}(k,0)$, and the universal Virasoro vertex operator algebra $V_{\text{Vir}}(c,0)$. For these examples, the $C_2$-algebra $R(V)$ is a polynomial algebra $\cc[x_1,\dots ,x_n]$ and $\on{Spec}(R(V)_0)=\{0\}$. Thus $H_{\{0\}}^{*}(R(V))=\wedge[x_1, \dots, x_n]$ and so $X_V=\bb A^n$ and $X_{V, 0}^{!}=\{0\}$. 

(3)  It is possible to have both $X_V \neq \{0\}$ and $X_{V, x}^{!} \neq \{0\}$. Indeed, take $V=V_{\hat{\mathfrak{g}}}(k,0) \otimes L_{\hat{\mathfrak{g}}}(l,0)$ with $l \in \mathbb{N}$. Then using Theorem~\ref{XvandXv!}:
\begin{itemize}
\item $X_V=X_{V_{\hat{\mathfrak{g}}}(k,0)} \times X_{L_{\hat{\mathfrak{g}}}(l,0)} \cong \mathfrak{g}^*$,
\item $X_{V, (0,0)}^{!} \cong X_{V_{\hat{\mathfrak{g}}}(k,0), 0}^{!} \times X_{L_{\hat{\mathfrak{g}}}(l,0), 0}^{!} \cong X_{L_{\hat{\mathfrak{g}}}(l,0), 0}^{!}$ which is not zero if $l \geq 2$ (cf. Theorem~\ref{dimX_V!LieAlgebra}). Here we cannot apply Theorem~\ref{XvandXv!} directly because $R(V_{\hat{\mathfrak{g}}}(k,0))$ is not local, but $\cc_{\{0\}}$ has a minimal free (finite) resolution as an $R(V_{\hat{\mathfrak{g}}}(k,0))$-module, so the rest of the theorem works. This implies that $X_{V, (0,0)}^{!}$ is not zero.
\end{itemize}
\end{remarks}

\section{Commutative quotients of Ext algebras of finitely generated commutative algebras}\label{Sectiondimcomology}
Computations of cohomological spaces tend to be difficult, and so the cohomological variety, if not trivial, is not easy to determine explicitly. In this section, for a finitely generated commutative algebra $R$ and a maximal ideal $\frak m$ with $R/\frak m=\bf k$, our goal is to determine an affine subscheme of $ \on{Spec}(\on{Ext}^*_{R}(\bfk, \bfk)^{ab})$.

\subsection{Tate's construction of a free  resolution}\label{SectionTate}
The starting point of our description is based on a construction of J. Tate (\cite{Tate}). We give below a brief presentation of his results.

Let $S$ be a commutative Noetherian ring with unit and containing a subfield of characteristic $0$.  
An $ \bb N$-graded {\em commutative differential graded} $S$-algebra  is a pair $X=(X_*, \partial)$, a graded associative $S$-algebra $X=\oplus_{n\geq 0}X_n$ equipped with an $S$-linear operator $\partial :X \longrightarrow X$ of degree $-1$ such that:
\vspace{-\topsep}
\begin{itemize}
\item $X=\bigoplus_{i \geq 0}X_i$ is positively graded by $S$-submodules, and $X_i X_j \subseteq X_{i+j}$,
\item $X$ has a unit $1$, $X_0=S1$, and the $X_i$'s are finitely generated $S$-modules,
\item $X$ is strictly skew-commutative, i.e \begin{align*}
\begin{array}{cr}
xy=(-1)^{ij}y x & \text{ for }x \in X_i, y \in X_j,
\end{array}
\end{align*}
and 
\begin{align*}
\begin{array}{cr}
x^2=0 & \text{ if }x \in X_i \text{ with } i \text{ odd},
\end{array}
\end{align*}
\item The map $\partial$ is a skew-derivation of degree $-1$, i.e. $\partial(X_i) \subseteq X_{i-1}$ for all $i$, $\partial^2=0$, and 
\begin{align*}
\begin{array}{cr}
\partial(xy)=(\partial x)y+(-1)^{i}x(\partial y) & \text{ for }x \in X_i, y \in X_j.
\end{array}
\end{align*}
\end{itemize}
By writing $\partial_{i}$ for the restriction of $\partial$ to $X_{i}$, an $S$-algebra $X$ can therefore be seen as a complex:
\begin{align*}
\cdots \longrightarrow X_n \stackrel{\partial_n}{\longrightarrow} X_{n-1} \longrightarrow \cdots \cdots \longrightarrow X_1 \stackrel{\partial_1}{\longrightarrow} X_0 \longrightarrow 0.
\end{align*}
If $X$ is acyclic (i.e., $H_n(X)=0$ for all $n>0$) and each $X_i$ is a free $S$-module, then it gives a free resolution of the $S$-module $H_0(X)=S/\partial_1(X_1)$:
\begin{align*}
\cdots \longrightarrow X_n \stackrel{\partial_n}{\longrightarrow} X_{n-1} \longrightarrow \cdots \cdots \longrightarrow X_1 \stackrel{\partial_1}{\longrightarrow} S \stackrel{\epsilon}{\longrightarrow} S/\partial_1(X_1) \longrightarrow 0.
\end{align*}
If $X=(X_*, \partial)$ is not necessarily  acyclic, but each $X_i$ is still a free $S$-module, the following is a process  to construct an another commutative differential graded $S$-algebra $Y=(Y_*, \partial)$, inductively by adding homogeneous variables to $ X$ to form a commutative differential algebra extension $Y\supseteq X$ so that the complex $(Y_*, \partial)$ becomes acyclic.  J. Tate described it as ``killing the homology" of $X$.

\delete{
Let $ Z_j(X)=\ker(\partial_j)$ the $S$-module of $ j$-cycles and $ B_j(X)=\on{im}(\partial_{j+1})$ be the $S$-module $j$-boundaries. 
Assume $H_j(X)=Z_j(X)/B_j(X)\neq 0$ (which is an $S$-module) and $H_j(X)=0$ for all $ i<j$.
 Then $H_j(X)$ is a graded $ S$-module concentrated in degree $j$. Consider the shift $ H_j(X)[1]$ which is concentrated in degree $j+1$. 
 Let $ \on{Sym}_S(H_j(X)[1])$ be the differential graded symmetric $S$-algebra. Here for any $ \bb Z$-grade $ S$-module the tensor algebra $ T_S(V)$ naturally graded by $\deg(v\otimes v')=\deg(v)+\deg(v')$ for homogeneous $ v, v'\in V$.  Let $\langle [V, V]\rangle $ be the ideal of $ T_S(V)$  generated by all $ [v, v']=v\otimes v' -(-1)^{\deg(v)\deg(v')}$ with $ v, v'\in V$ homogeneous.  Thus $ \on{Sym}_S(V)=T_S(V)/\langle [V, V]\rangle$ is a commutative differential graded $S$-algebra which is also free with zero differential $ \partial =0$. 
 $ \on{Sym}_S(V)$ is free in the sense 
    that for any commutative differential algebra $ (A,\partial)$ any homogeneous $S$-module homomorphism $ \phi: V\to A$ such that $\partial \phi(v)=0$ for all $ v\in V$, i.e., $ \phi: v\to Z(A)$ is a homogeneous $S$-module homomorphism, then there is unique $S$-algebra homomorphism $\on{sym}_S(V)\to A$. }

For  $j >0$  a positive integer, we denote by $Z_{j-1}(X)=\on{Ker}(\partial_{j-1})$ and $B_{j-1}(X)=\on{Im}(\partial_{j})$ the groups of cycles and boundaries of the complex $X$. Let $t \in Z_{j-1}(X)\setminus B_{j-1}(X)$ be a cycle of degree $j-1$. The inductive step is to construct an extension $X\to Y$ of commutative differential graded $S$-algebras  such that:
\vspace{-\topsep}
\begin{itemize}
\item $Y_i= X_i$ for $i<j$, and,
\item $Y_j=X_j\oplus ST$ with $ T $ of degree $j$.
\item $B_{j-1}(Y)= B_{j-1}(X) + St$ with $\partial_j (T)=t$.
\end{itemize}
 The inclusion $X\subseteq Y$ as differential graded algebras induces a homomorphism of graded algebras $ H_*(X)\to H_*(Y)$ such that  the element $t$ is sent to zero. The construction differs depending on the parity of $j$:

- $j$ odd. Let $XT$ be the free $X$-module with one basis element $T$, and set $Y=X\langle T\rangle=X \oplus XT$. We grade $Y$ by giving $T$ the degree $j$; that is, we write $Y_i= X_i+X_{i-j}T$ for all $i \geq 0$. This defines $Y$ as a graded $S$-module. The graded $S$-algebra structure  is defined by $T^2=0$ and  the super commuting relation $ Tx=(-1)^{\deg(x) \deg(T)} xT$. Note that $T$ does not commute with odd degree elements in $X$. Now  there is a unique way to extend $\partial$ to $Y$ such that $\partial_j T=t$ by product rule $ \partial (xT)=\partial (x)T+(-1)^{\deg(x)}x t. $  Thus $ (Y, \partial)$ is commutative differential graded algebra and $ X\to Y$ is a homomorphism of differential graded algebras. 

- $j$ even. Set $Y=X\langle T\rangle =X[T]$, the ring of polynomials with one commuting variable and coefficients in $X$. By setting the degree of $T$ as $j$ and the derivation as $\partial_j T=t$ and using the product rule $ \partial (xT^n)=\partial (x)T^n+(-1)^{\deg(x)}n x t T^{n-1}$, we obtain the desired commutative differential graded algebra $S$-algebra $Y$. 

We remark that in general, $X\langle T\rangle $ should be defined by using the divided powers of $T$ rather than polynomial algebra. Since we will assume that $S$ contains $\bb Q$, the field of rational numbers, the above construction is equivalent to Tate's construction.

In both cases, $j$ even and $j$ odd,   we denote the commutative differential graded $S$-algebra $Y=X\langle T \rangle$ with $\deg(T)=j$ and   $ \partial T=t \in Z_{j-1}(X)$. 

In general, for any subset $ C_{j-1}\subseteq Z_{j-1}(X)$ such that $ B_{j-1}(X)+SC_{j-1}=Z_{j-1}(X)$, take a free $S$-module  $F_{j}$ with basis $C_{j-1}$ with $ S$-module homomorphism $ \phi: F_{j}\to Z_{j-1}(X)$.  We define $ S\langle F_{j}\rangle =\wedge_{S}(F_{j})$ if $ j$ is odd and $S\langle F_{j}\rangle=\on{Sym}_S(F_j)$ as usual for $j$ even as graded algebra with $ F_j$ in degree $j$. Let $ F_j[j]$ be the graded $ S$-module with $ F_j$ concentrated in degree $ j$. Then $S\langle F_{j}\rangle=\on{Sym}_S(F_j[j])$ is the symmetric algebra in the symmetric tensor category of graded $S$-modules with tensor product $ \otimes_S$ and braiding $ (x\otimes y)\to (-1)^{\deg(x)\deg(y)}(y \otimes x)$. We write $ X\langle F_j\rangle=X\otimes_S S\langle F_j\rangle $. Thus the algebra structure in $X\otimes_S S\langle F_j\rangle$ is defined by $ (x\otimes_S f)(x'\otimes_S f') =(-1)^{\deg(x')\deg(f)}xx'\otimes_S ff'$ for homogeneous elements $ x, x'\in X$ and $ f, f'\in S\langle F_j\rangle $. The graded algebra $X\langle F_j\rangle$ has a differential uniquely extending the one on $X$ using $ \partial (x\otimes v)=\partial(x)\otimes v+(-1)^{\deg(x)}x\phi(v) \otimes 1$ for all $ v\in F_j$.  Then $Y=X\langle F_j\rangle $ has the above properties (i)-(iii), which are modified as
\begin{itemize}
\item[(i)] $  X\subseteq Y$ is a differential subalgebra, and $Y_i= X_i$ for $i<j$,
\item[(ii)] $Y_j=X_j\oplus F_j $,
\item[(iii)]  $B_{j-1}(Y)=B_{j-1}(X)+SC_{j-1}\subseteq Z_{j-1}(Y)=Z_{j-1}(X)$.
\end{itemize}
Note that if all summands $X_i$ are free $S$-modules, so are all the summands $Y_i$. 

We remark that $X$ is a commutative differential graded algebra and $\phi:  F_{j}[j-1]\to X$ is a chain map. Then $ X\langle F_j\rangle=X\otimes_{S}\on{Sym}_{S}(F_j[j])=\on{Sym}_{X}(X\otimes S F_j[j])$ is the Koszul complex of the chain map $ X\otimes_S  F_j[j-1]\to X$ in the dg setting.

\subsection{Tate resolutions corresponding to ideals} Let $I$ be an ideal of  $S$.  Taking $X^0=S$  as a commutative differential graded algebra  concentrated in degree $0$. Note that $H_0(X^0)=Z_0(X^0)=S$.  Take $ C_0$ as a minimal generating subset of the ideal $ I \subseteq Z_0(X)=S$, we get a commutative differential graded algebra $X^1=S\langle F_1\rangle$ from the above construction. $X^1$ is exactly the Koszul complex of the map $ F_1\to S$ in the classical sense. 
 We inductively define an $\bb N$-graded commutative differential algebra $X^j$ such that each $ X_i^{j}$ is a free $ S$-module with a basis $ \cal B(X^j_i)$ and satisfies \begin{itemize}
\item $ X^i\subseteq X^j$ is  a differential graded subalgebra for each $ i\leq j$;
\item $X^j_i=X^i_i$ and $ \cal B(X^i_i)=\cal B(X^j_i)$ for all $ 0<i\leq j$;
\item $H_i(X^{j})=0$  for all $ i< j$.
\end{itemize}
Assume that $ X^j$ has been constructed for $j>0$. If $ H_j(X^j)\neq 0$, then choose a subset $ C_j\subseteq Z_j(X^j)$ such that their image in $ H_j(X^j)$ generate  $H_j(X^j)$  as $S$-module. Let $ F_{j+1}$ be the free $S$-module with basis $C_j$. Then $F_{j+1}[j+1]$ is a graded $S$-module with $ F_{j+1}$  in degree $ j+1$.  Thus define $ S\langle F_{j+1}\rangle=\on{Sym}_S(F_{j+1}[j+1])$ to be the symmetric algebra in the symmetric tensor category of differential complexes of  $ S$-modules with tensor product $ \otimes _S$. 
Note that $ \on{Sym}_S(F_{j+1}[j+1])$ is commutative  differential graded algebra with differential $\partial=0$ and  has the universal property that for any commutative  graded  $S$-algebra $ A$ and any set map $ \rho: C_j\to Z_j(A)$, there is a unique differential graded algebra  homomorphism $ \on{Sym}_S(F_{j+1}[j+1])\to A$ extending the map $ \rho$ (see \cite[1.1]{Iyengar}).
We now define $X^{j+1}=X^j\otimes_S \on{Sym}_S(F_{j+1}[j+1])$ as graded algebra with multiplication defined by $ (x\otimes_S f)(x'\otimes_S f') =(-1)^{\deg(x')\deg(f)}xx'\otimes_S ff'$ and the differential $ \partial $ is defined uniquely extending the one on $X^j$ by $ \partial (x\otimes v)=\partial(x)\otimes v+(-1)^{\deg(x)}x\phi(v) \otimes 1$  with $ v\in F_{j+1}$.  

Then $ X^{j}=X^{j}\otimes_S S\subseteq X^{j+1}$ is a differential graded subalgebra. Define $ \cal B(X^{j}_i)=\cal B(X^{j+1}_i)$ for all $i\leq j$ and $ \cal B(X^{j+1}_{j+1})=\cal B(X^{j}_{j+1})\cup C_j$ and $ \cal B(X^{j}_{l})\subseteq \cal B(X^{j+1}_{l})$ for all $ l\geq j+1$ by noting that 
\[ X^{j+1}_{l}=\bigoplus_{s\geq 0} X_{l-s}^{j}\otimes_{S}\on{Sym}^s_S(F_{j+1}[j+1]) .\]

Finally we get a commutative graded differential algebra $ X=\bigcup_{j}X^{j}$, which is a commutative differential graded algebra over $S$  with $H_i(X)=0$ for all $ i>0$, $ H_0(X)=S/I$, and all summands $ X_i$ are free modules with a basis $\cal B(X_i)$.  

The tower of commutative differential graded algebras $X^j\subseteq X^{j+1} \subseteq \cdots \subseteq X$ is called a Postnikov tower. See \cite{Buchweitz-Roberts}. 

It also follows from the construction that the basis is multiplicative in the following sense: for any $ b\in \cal B(X_i)$ and $b'\in \cal B(X_j)$, there is unique basis element $b''\in \cal B(X_{i+j})$ such that $bb'=\pm b''$. This can be achieved by choosing a total order on the set $C_j$ which gives the ordered basis of $ F_{j+1}$, and thus an ordered  basis $\cal B(X^{j+1})$ whose elements are of the form $ bc_{n_1}^{a_{1}}\cdots c_{n_d}^{a_{d}}$ (with $\{ c_{n_1}<c_{n_2}<\cdots<c_{n_d}\}$ being finite ordered subset of $ C_j$ and  $b \in \cal B(X^{j}))$. 

  The following result can then be obtained (\cite[Theorem 1]{Tate}). 

\begin{theorem} \label{thm:tate-resolution-ideals}
Let $ I$ be any ideal in $S$. Then there exists a commutative differential graded $S$-algebra $X=(X_*, \partial)$ where each summand $X_i$ is $S$-free with basis $ \cal B(X_i)$ such that $b\in \cal B(X_i)$ and $ b'\in\cal B(X_j)$ implies either $ bb'$ or $-bb'$ is in $ \cal B(X_{i+j})$. Furthermore, $H_0(X)=S/I$ and $H_i(X)=0$ for $i>0$. 
\end{theorem}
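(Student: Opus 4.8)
The plan is to assemble the inductive construction recalled above into a single commutative differential graded $S$-algebra and then verify that its invariants survive passage to the colimit. I would begin with the base of the induction: set $X^0 = S$ concentrated in degree $0$, and form $X^1 = S\langle F_1\rangle$, the Koszul complex attached to a finite generating set $C_0$ of $I$ (finite by Noetherianity of $S$). Then $X^1_0 = S$ and $\partial_1(X^1_1) = I$, so $H_0(X^1) = S/I$, while $H_i(X^1) = 0$ for $0 < i < 1$ vacuously. This anchors the two properties I must propagate: that the degree-$0$ homology equals $S/I$, and that the positive-degree homology is being successively annihilated.

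For the inductive step, assuming $X^j$ has been built with each $X^j_i$ a finitely generated free $S$-module and $H_i(X^j) = 0$ for $0 < i < j$, I would kill $H_j(X^j)$: choose a finite subset $C_j \subseteq Z_j(X^j)$ whose classes generate the finitely generated $S$-module $H_j(X^j)$, let $F_{j+1}$ be free on $C_j$ placed in degree $j+1$, and set $X^{j+1} = X^j \otimes_S \on{Sym}_S(F_{j+1}[j+1])$ with product and skew-derivation $\partial$ extended by the formulas recalled above (an exterior algebra when $j+1$ is odd, a polynomial algebra when $j+1$ is even, these agreeing with Tate's divided powers since $S \supseteq \mathbb{Q}$). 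The invariants to check are then: (i) $X^j \hookrightarrow X^{j+1}$ is a dg subalgebra and $X^{j+1}_i = X^j_i$ for $i \le j$, which is immediate since the new generators sit in degree $j+1$; (ii) freeness and finite generation of each $X^{j+1}_i$, which follows from the finite direct sum decomposition $X^{j+1}_i = \bigoplus_{s \ge 0} X^j_{i-s} \otimes_S \on{Sym}^s_S(F_{j+1}[j+1])$ into free modules; and (iii) the homology statement $H_i(X^{j+1}) = 0$ for $0 < i \le j$. The essential point in (iii) is that adjoining variables of degree $j+1$ leaves the truncation in degrees $\le j$ unchanged, hence does not alter $H_i$ for $i < j$, while the new boundaries $\partial(F_{j+1}) = \phi(C_j)$ enlarge $B_j$ precisely enough to send $H_j(X^j)$ to zero.

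Having produced the Postnikov tower $X^0 \subseteq X^1 \subseteq \cdots$, I would define $X = \bigcup_j X^j$, which is a commutative differential graded $S$-algebra as an increasing union of dg subalgebras. Because $X^j_i = X^i_i$ for all $j \ge i$, each graded piece stabilizes: $X_i = X^i_i$ is a finitely generated free $S$-module with basis $\mathcal{B}(X_i)$ the stabilized union of the $\mathcal{B}(X^j_i)$. The homology is then read off degree by degree: since the portion $X_{i+1} \to X_i \to X_{i-1}$ of $X$ coincides with that of $X^{i+1}$, we get $H_i(X) = H_i(X^{i+1}) = 0$ for $i > 0$ by invariant (iii), while $H_0(X) = S/\partial_1(X_1) = S/I$, a value fixed since $X^1$.

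Finally, the multiplicativity-up-to-sign of the basis is built into the construction: fixing a total order on each $C_j$ gives an ordered monomial basis of $\on{Sym}_S(F_{j+1}[j+1])$, and by induction the elements of $\mathcal{B}(X_i)$ are ordered monomials $b\, c_{n_1}^{a_1} \cdots c_{n_d}^{a_d}$ in the adjoined variables; multiplying two such monomials and reordering the factors yields another basis monomial up to the sign dictated by strict skew-commutativity, so $bb' = \pm b''$ with $b'' \in \mathcal{B}(X_{i+j})$. I expect the main obstacle to be the bookkeeping in step (iii), namely verifying cleanly that killing $H_j$ perturbs neither the lower homology nor the already-fixed degree-$0$ value, together with keeping the sign conventions for $\partial$ and the product consistent across the even and odd cases so that $\partial^2 = 0$ and strict skew-commutativity genuinely persist in the colimit.
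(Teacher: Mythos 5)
Your proposal is correct and follows essentially the same route as the paper: the inductive Postnikov tower $X^0\subseteq X^1\subseteq\cdots$ obtained by adjoining $\on{Sym}_S(F_{j+1}[j+1])$ to kill $H_j$, the observation that graded pieces (and hence homology in degrees $<j$ and the value $H_0=S/I$) stabilize so the union $X=\bigcup_j X^j$ inherits acyclicity, and the multiplicative basis from ordered monomials in the adjoined variables. The verification that the new boundaries $\partial(F_{j+1})=\phi(C_j)$ enlarge $B_j$ to all of $Z_j$ while leaving degrees $\le j$ untouched is exactly the paper's argument, so there is nothing to correct.
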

We call the commutative differential graded algebra $X$ in this theorem the Tate resolution of the homomorphism $ S\to S/I$. We remark that the resolution constructed in this theorem is by no means unique. It depends on the choice of the free module $ F_{j+1}$ (rather the set $C_j$) at each step. In case $S$ is Noetherian, then $ I$ is finitely generated and $Z_{j}(X^j)$ are all finitely generated $S$-modules. Thus one can choose a minimal number of generators $C_j$ so that the number of variables at each step is as small as possible.

\subsection{Compatible Tate resolutions}
Let $\pi: \tilde S\to S$ be a  homomorphism of commutative rings. Let $ I\subseteq S$ and $\tilde I\subseteq \tilde S$  be ideals such that $ \pi(\tilde I)\subseteq I$. Then $ \pi$ induces a ring homomorphism $ \tilde S/\tilde I\to S/I$. We simply regard all $ S$-modules as $\tilde S$-modules via the restriction functor $ \pi^*: S\Mod\to \tilde S\Mod$. 

\begin{theorem} \label{thm:compatible_tate_resolution} There are compatible Tate resolutions $ \tilde X $ of $ \tilde S\to \tilde S/\tilde I$ and $  X $ of $  S\to  S/ I$ with bases $ \cal B(\tilde X)$ and $\cal B(\tilde X)$ in the sense that there is a  differential graded algebra homomorphism $ \tilde X\to X$ which extends the exact sequence 
\[\xymatrix{ \cdots\ar[r]&\tilde X_2\ar[r]\ar[d]_{\pi_2}&\tilde X_1\ar[d]_{\pi_1}\ar[r] &\tilde  S\ar[r]\ar[d]_{\pi}& \tilde S/\tilde I\ar[d]\\
 \cdots\ar[r]& X_2\ar[r]& X_1\ar[r] &  S\ar[r]&  S/I
}\]
and $\pi_n (\cal B(\tilde X_n)) \subseteq \cal B(X_n)\cup \{0\}$. In particular, $\pi_n$ factors through $S\otimes_{\tilde S}\tilde X_n\to X_n$ with splitting cokernel as $ S$-module. 
\end{theorem}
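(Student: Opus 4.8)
The plan is to build the lower resolution \emph{by base change} along $\pi$ and then continue Tate's killing procedure on top of it. First I would fix any Tate resolution $\tilde X=(\tilde X_*,\partial)$ of $\tilde S\to \tilde S/\tilde I$ with multiplicative basis $\cal B(\tilde X)$, as produced by Theorem~\ref{thm:tate-resolution-ideals}, and form the base change
\[ Y=S\otimes_{\tilde S}\tilde X. \]
Since each $\tilde X_n$ is $\tilde S$-free with basis $\cal B(\tilde X_n)$, each $Y_n=S\otimes_{\tilde S}\tilde X_n$ is $S$-free with basis $\{1\otimes b\mid b\in\cal B(\tilde X_n)\}$, and $Y$ inherits from $\tilde X$ the structure of a commutative differential graded $S$-algebra whose basis is again multiplicative (the product rule and the signs are transported verbatim by $\otimes_{\tilde S}S$). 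The unit map $\iota:\tilde X\to Y$, $x\mapsto 1\otimes x$, is a homomorphism of differential graded algebras over $\pi:\tilde S\to S$, and it carries $\cal B(\tilde X_n)$ bijectively onto $\cal B(Y_n)$.

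Next I would record what $Y$ computes. As $H_0$ is right exact, $H_0(Y)=S\otimes_{\tilde S}(\tilde S/\tilde I)=S/\tilde I S$, where $\tilde I S=\pi(\tilde I)S$; by hypothesis $\pi(\tilde I)\subseteq I$, so $\tilde I S\subseteq I$. Base change need not preserve acyclicity, so $Y$ will in general have nonzero homology in positive degrees, and in degree $0$ one has only $B_0(Y)=\tilde I S$ rather than $I$. Both defects are repaired by running Tate's procedure \emph{with $Y$ as the starting differential graded algebra} in place of $X^0=S$: the construction recalled in Section~\ref{SectionTate} only ever adjoins exterior (odd degree) or polynomial (even degree) variables killing prescribed cycles, and is therefore valid starting from any commutative differential graded $S$-algebra with free terms and multiplicative basis. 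Concretely, I would first choose $D_0\subseteq I$ whose images generate $I/\tilde I S$ and adjoin degree-one exterior variables $G_1$ with $\partial T_d=d$ for $d\in D_0$; this enlarges $B_0$ to $\tilde I S+SD_0=I$, so that $H_0=S/I$ after this step. I would then proceed exactly as in Theorem~\ref{thm:tate-resolution-ideals}, inductively adjoining variables $F_{j+1}$ killing $H_j$ for $j\geq 1$, obtaining a Tate resolution $X$ of $S\to S/I$ together with an inclusion $Y\hookrightarrow X$ of differential graded algebras.

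Defining $\pi_*:\tilde X\to X$ as the composite $\tilde X\xrightarrow{\iota}Y\hookrightarrow X$ gives the required homomorphism lifting $\pi$, and the stated commuting ladder is precisely the assertion that $\iota$ and the inclusion are chain maps over $\pi$. By construction $X=Y\otimes_S\on{Sym}_S(\text{new variables})$, so its multiplicative basis consists of products of the basis of $Y$ with monomials in the adjoined variables; the basis elements with trivial new-variable part are exactly $\cal B(Y_n)\subseteq\cal B(X_n)$. Hence $\pi_n(b)=1\otimes b\in\cal B(Y_n)\subseteq\cal B(X_n)$ for every $b\in\cal B(\tilde X_n)$, which is the basis compatibility $\pi_n(\cal B(\tilde X_n))\subseteq\cal B(X_n)\cup\{0\}$ (in fact no zeros occur in this construction). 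Finally $\pi_n$ factors as $\tilde X_n\to S\otimes_{\tilde S}\tilde X_n=Y_n\hookrightarrow X_n$, and since $X_n=Y_n\oplus C_n$ with $C_n$ the free $S$-module on the degree-$n$ monomials involving at least one new variable, the cokernel of $S\otimes_{\tilde S}\tilde X_n\to X_n$ is the free module $C_n$ and the inclusion splits as $S$-modules.

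The step I expect to require the most care is the verification that Tate's killing procedure genuinely applies with $Y$ as its starting object: one must confirm that $Y$ is a bona fide commutative differential graded $S$-algebra with $S$-free terms and a multiplicative basis, and that adjoining the variables $G_1,F_2,F_3,\dots$ to $Y$ leaves $\cal B(Y_n)$ as an honest subset of $\cal B(X_n)$ (so that no basis element of $Y$ gets broken into a nontrivial combination in $X$). Once this bookkeeping is in place, the homological content — that killing homology yields an acyclic complex resolving $S/I$ — is immediate from Theorem~\ref{thm:tate-resolution-ideals}.
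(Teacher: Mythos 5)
Your proof is correct, but it takes a genuinely different route from the paper's. The paper constructs the two resolutions \emph{simultaneously}, by a parallel induction on the Postnikov towers: assuming $\pi_j:\tilde X^j\to X^j$ already built, it chooses the generating set $C_j\subseteq Z_j(X^j)$ of $H_j(X^j)$ so as to extend a generating set of the image of $H_j(\tilde X^j)$, forcing $\pi_j(\tilde C_j)\subseteq C_j\cup\{0\}$, and then extends $\pi_j$ over the adjoined variables via the universal property of $\on{Sym}_S(F_{j+1}[j+1])$. You instead fix $\tilde X$ once and for all, base change to $Y=S\otimes_{\tilde S}\tilde X$ — correctly noting that $Y$ remains a commutative differential graded $S$-algebra with free terms and multiplicative basis, but with $H_0(Y)=S/\pi(\tilde I)S$ and possibly nonzero higher homology — and then resume Tate's killing procedure of Section~\ref{SectionTate} on top of $Y$, first enlarging $B_0$ from $\pi(\tilde I)S$ to $I$ by degree-one exterior variables and then killing $H_j$ for $j\geq 1$ as in Theorem~\ref{thm:tate-resolution-ideals}; this is legitimate since that procedure applies to any starting differential graded algebra with free terms. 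What your route buys: the final clause of the theorem is visible by construction, since $\pi_n$ factors as $\tilde X_n\to Y_n\hookrightarrow X_n$ with $X_n=Y_n\oplus C_n$, $C_n$ free on the monomials involving at least one new variable, whereas in the paper the splitting is read off from the basis condition. What the paper's route buys: the freedom to send basis elements to \emph{zero}. Your $\pi_n$ embeds $\cal B(\tilde X_n)$ into $\cal B(X_n)$ with no zeros, so when $\pi$ kills a variable's boundary (e.g.\ $\pi(\tilde t_i)=0$) the corresponding exterior generator survives in $X$ as a cycle that must be compensated by extra higher variables, yielding a larger, non-minimal $X$. The paper's parallel induction permits $\pi_j(\tilde C_j)\subseteq C_j\cup\{0\}$, and exactly this flexibility is exploited in Section~\ref{ComputeH*(Rtilde)}, where sending $\tilde T_i\mapsto 0$ for $i>n$ arranges the image of $\tilde X\to X$ to be the finitely generated subalgebra $X^2$ — a property your compatible pair does not have. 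As a proof of the theorem as stated (pure existence of a compatible pair), your argument is complete; for the downstream application one would still want the paper's construction, or a variant of yours in which the superfluous base-changed variables are quotiented away rather than carried along.
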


\begin{proof}
We follow the construction of the above theorem and assume that $\tilde X^j$ and $ X^j$ have been constructed with a homomorphism $\pi_j: \tilde X^j\to X^j$ of commutative differential algebras. Then $ \pi_j(Z_j(\tilde X^j))\subseteq Z_j(X^j)$ and  $ \pi_j(B_j(\tilde X^j))\subseteq B_j(X^j)$, inducing a homomorphism $ H_j(\pi): H_j(\tilde X^j)\to H_j(X^j)$ since $ \pi$ is a chain map. By choosing a set of generators of the $S$-module $H_j(X^j)$ extending a set of generators of the image  $H_j(\pi)( H_j(\tilde X^j))$, we can choose $C_{j}\subseteq Z_j(X^j)$  and $\tilde C_{j}\subseteq Z_j(\tilde X^j)$ such that  $Z_j( X^j)=B_j( X^j)+ S  C_{j}$,  $Z_j(\tilde X^j)=B_j(\tilde X^j)+\tilde S \tilde C_{j}$,  and $ \pi_j(\tilde C_{j})\subseteq C_{j}\cup \{0\}$. Thus 
there is an $\tilde S$-module  homomorphism $ \tilde F_{j+1}\to F_{j+1}$ of $ \tilde S$-modules sending basis elements to basis elements or zero and making the following diagram commute:
\[\xymatrix{\tilde F_{j+1}\ar[r]^{\tilde \rho}\ar[d]_{}& Z_j(\tilde X^j)\ar[d]^{\pi_j}\\
F_{j+1}\ar[r]_\rho&Z_j(X^j). }
\]
The vertical map $\tilde F_{j+1}\to F_{j+1}$ actually factors through $ S\otimes_{\tilde S}\tilde F_{j+1}$, which is a free $S$-module and has a splitting cokernel. The existence of the $S$-module homomorphism $ S\otimes_{\tilde S}\tilde F_{j+1}\to F_{j+1}$ follows from the fact that $ \pi_j(\tilde \rho (\tilde F_{j+1}))\subseteq \rho(F_{j+1})$ since $\pi_j(\tilde C_j)\subseteq C_j\cup \{0\}$. 
 
Then we use the universal mapping property to get a differentially graded algebra homomorphism   $ \on{Sym}_{\tilde S}(\tilde F_{j+1}[j+1]) \to \on{Sym}_{ S}( F_{j+1}[j+1])$, which sends basis elements to basis elements or zero with a fixed choice of monomial basis (by choosing total orderings in $\tilde C_j$ and $C_j$ so that $\pi_j$ preserves the total ordering.  We now obtain the differential graded algebra homomorphism $ \tilde X^{j+1}=\tilde X^j\otimes \on{Sym}_{\tilde S}(\tilde F_{j+1}[j+1]) \stackrel{\pi_{j+1}}{\to} X^{j}\otimes \on{Sym}_{ S}( F_{j+1}[j+1])=X^{j+1}$, together with the choices of the basis $\cal B(\tilde X^{j+1})$ and $\cal B(X^{j+1})$. The rest follows the construction in the theorem. 
\end{proof}

\begin{corollary} \label{cor:ext-hom}
If $ \pi:\tilde S\to S$ is  a surjective homomorphism of commutative ring with $ \pi(\tilde I)\subseteq I$ and such that $\tilde S/ \tilde I \cong S/I$, then there is a homomorphism of algebras
\[\pi^\#: \on{Ext}_S^*(S/I, S/I)\to \on{Ext}_{\tilde S}^*(\tilde S/\tilde I, \tilde S/\tilde I).\]
\end{corollary}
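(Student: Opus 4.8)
The plan is to read the corollary off the compatible Tate resolutions produced by Theorem~\ref{thm:compatible_tate_resolution}; it is the multiplicative refinement, in this concrete chain model, of the functoriality already observed in Section~\ref{SectionFunctor}. Write $k=S/I$. First I would record that the hypotheses force $\pi^{-1}(I)=\tilde I$: the map $\pi$ induces a surjection $\tilde S/\tilde I\to S/I$ with kernel $\pi^{-1}(I)/\tilde I$, and the hypothesis $\tilde S/\tilde I\cong S/I$ (via this natural map) makes it an isomorphism, whence $\pi^{-1}(I)=\tilde I$. Consequently the restriction-of-scalars functor $\pi^*:S\Mod\to\tilde S\Mod$ carries the $S$-module $k$ to the $\tilde S$-module $\tilde S/\tilde I$ and identifies the two augmentations. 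Theorem~\ref{thm:compatible_tate_resolution} then gives free resolutions $X\to k$ over $S$ and $\tilde X\to\tilde S/\tilde I$ over $\tilde S$, together with a differential graded algebra homomorphism $\pi_\bullet:\tilde X\to X$ lying over $\pi$ and over the isomorphism $\tilde S/\tilde I\cong S/I$.

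Next I would define $\pi^\#$ at the chain level and check linearity. Computing $\on{Ext}_S^*(k,k)=H^*(\on{Hom}_S(X,k))$ and $\on{Ext}_{\tilde S}^*(\tilde S/\tilde I,\tilde S/\tilde I)=H^*(\on{Hom}_{\tilde S}(\tilde X,k))$ (with $k$ viewed as $\tilde S/\tilde I$ in the second), precomposition $f\mapsto f\circ\pi_n$ sends $\on{Hom}_S(X_n,k)$ into $\on{Hom}_{\tilde S}(\tilde X_n,k)$: the composite is $\tilde S$-linear because $\pi_n$ is $\tilde S$-linear and the $\tilde S$-action on $k$ factors through $\pi$. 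Since $\pi_\bullet$ is a chain map these assemble into a cochain map, and passing to cohomology produces the graded linear map $\pi^\#$; this is exactly \cite[VIII.3]{Cartan-Eilenberg} applied to the augmented-algebra homomorphism $\pi$. For multiplicativity I would realize the Yoneda product by the usual lift-and-compose recipe: a degree-$q$ cocycle $g:X\to k$ lifts, by freeness of $X$, to a chain endomorphism $\hat g$ of the resolution covering $g$ along the augmentation $\epsilon:X\to k$, and $f\cdot g$ is represented by $f\circ\hat g$. Then $\hat g\circ\pi_\bullet$ and $\pi_\bullet\circ\widehat{g\circ\pi_\bullet}$ are two chain maps $\tilde X\to X$ both lifting $g\circ\pi_\bullet:\tilde X\to k$, hence chain homotopic by the comparison theorem; composing with $f$ shows $\pi^\#(f\cdot g)=\pi^\#(f)\cdot\pi^\#(g)$ in cohomology, and unitality is immediate.

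I expect the only real subtlety to lie in this last step, namely in matching the chain-level product used to represent the Yoneda product with the map $\pi_\bullet$ while keeping track of the degree shifts and the signs forced by the strict skew-commutativity of the Tate resolutions (recall that $\on{Ext}_S^*(k,k)$ need not be graded-commutative). A cleaner way to discharge multiplicativity, which avoids these signs entirely, is to argue directly with extensions as in the proof of the functoriality proposition: the exact functor $\pi^*$ sends an $n$-fold extension of $k$ by $k$ over $S$ to an $n$-fold extension of $\tilde S/\tilde I$ by $\tilde S/\tilde I$ over $\tilde S$, respects equivalence of extensions, and commutes with Yoneda concatenation and with the trivial $0$-extension, so it induces an algebra homomorphism on $\on{Ext}^*$ at once. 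Either route proves the corollary; the advantage of routing through Theorem~\ref{thm:compatible_tate_resolution} is that $\pi_\bullet$ sends basis elements to basis elements or $0$, so $\pi^\#$ admits the explicit basis-level description that will be needed in the later sections.
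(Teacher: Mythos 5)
Your proof is correct and follows essentially the same route as the paper: the map $\pi^\#$ is defined by precomposition through the compatible Tate resolutions of Theorem~\ref{thm:compatible_tate_resolution} (the paper records this as the chain of natural homomorphisms $\on{Hom}_S(X_n, S/I)\to \on{Hom}_{\tilde S}(\tilde X_n, \tilde S/\tilde I)$), and multiplicativity is handled exactly as in your ``cleaner way,'' via exactness of the restriction functor $\pi^*$ applied to Yoneda compositions of extensions. Your preliminary observation that the hypotheses force $\pi^{-1}(I)=\tilde I$, so that $\pi^*$ identifies the two augmentation modules, is a useful detail the paper leaves implicit.
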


We leave the proof to the readers by simply mentioning the following natural homomorphisms of abelian groups
\[ \on{Hom}_S(X_n, S/I)\to \on{Hom}_S(S\otimes_{\tilde S}\tilde X_n, S/I)\to \on{Hom}_{\tilde S}(\tilde X_n, S/I)\to \on{Hom}_{\tilde S}(\tilde X_n, \tilde S/\tilde I).
\]
The Yoneda composition defined in \eqref{eq:Yoneda-composition} is clear using the fact that the restriction functor $ \pi^*: S\Mod\to \tilde S\Mod$ is exact. 

\subsection{Complete intersections}\label{sec:4.4}
A free resolution can be obtained explicitly in a particular case. A sequence of elements $(c_1, c_2, \dots, c_r)$ in $S$ is said to be a \textbf{regular sequence} if $c_1$ is not a zero divisor in $S$, and if, for each $i$, $1 \leq i < r,$ the residue class of $c_{i+1}$ is not a zero divisor in the residue class ring $S/ \langle c_1, \dots , c_i \rangle $. The quotient $S/ \langle c_1, c_2, \dots, c_r \rangle$ is then called a \textbf{complete intersection}. The next result is also due to J. Tate (\cite[Theorem 4]{Tate}).

\begin{theorem}\label{TateTheorem}
Let $t_1, \dots, t_n$ and $c_1, \dots, c_r$ be regular sequences of elements of $S$ such that the ideal $I= \langle c_1, \dots, c_r \rangle$ generated by the $c_i$'s is contained in the ideal $\frak m= \langle t_1, \dots, t_n \rangle$ generated by the $t_i$'s. Write $c_j =\sum_{i=1}^n c_{j, i}t_i$, with $c_{j, i} \in S$. Set $\bar{S}=S/I$ and $\overline{\frak m}=\frak m/I$, and let $\overline{c_{j, i}}$ and $\overline{t_i}$ denote images  of $c_{j, i}$ and $t_i$ in $\bar S$. Then the algebra
\begin{align*}
X=\bar{S} \langle T_1, \dots, T_n, S_1, \dots, S_r \rangle =\bar{S}[S_1, \dots, S_r]\otimes_{\bar{S}}\wedge_{\bar{S}}[T_1, \dots, T_n]
\end{align*}
with the elements in $ \bar S$ in degree $0$, the $T_i$'s of degree $1$, the $S_j$'s of degree 2, and the differential $ \partial $ defined by 
\begin{align}\label{eq:ci-differential}
\partial_1 T_i=\overline{t_i}, \quad \partial_2 S_j =\sum_{i=1}^n \overline{c_{j, i}} \, T_i,
\end{align}
is acyclic, and therefore yields a free resolution of the $\bar{S}$-module $\bar{S}/\overline{\frak m}$.
\end{theorem}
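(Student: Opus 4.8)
The plan is to prove acyclicity by induction on the number $r$ of degree-two variables, realizing $X$ as the last stage of a tower obtained by adjoining one even variable at a time. For $0\le j\le r$ write $\bar S_j=S/\langle c_1,\dots,c_j\rangle$ and let $\mathcal X_j=\bar S_j[S_1,\dots,S_j]\otimes_{\bar S_j}\wedge_{\bar S_j}[T_1,\dots,T_n]$ be the analogous commutative differential graded $\bar S_j$-algebra, with $\partial T_i=\bar t_i$ and $\partial S_l=\xi_l$, where $\xi_l:=\sum_{i=1}^n\overline{c_{l,i}}\,T_i$ (images taken in $\bar S_j$). Thus $\mathcal X_r=X$ and $\mathcal X_0=\wedge_S[T_1,\dots,T_n]$. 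Set $k:=S/\frak m=\bar S/\overline{\frak m}$; since $\bb Q\subseteq S$, every positive integer is invertible in $S$ and hence acts invertibly on $k$. I claim each $\mathcal X_j$ is a free resolution of $k$ over $\bar S_j$. The base case $j=0$ is exactly the statement that the Koszul complex on the regular sequence $t_1,\dots,t_n$ is acyclic and resolves $S/\frak m$, which is classical.

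For the inductive step, assume $\mathcal X_{r-1}$ resolves $k$ over $\bar S_{r-1}$. Because $c_1,\dots,c_r$ is a regular sequence, the image $\bar c_r$ of $c_r$ in $\bar S_{r-1}$ is a nonzerodivisor, and as every term of $\mathcal X_{r-1}$ is $\bar S_{r-1}$-free, multiplication by $\bar c_r$ is injective. Hence there is a short exact sequence of complexes
\[ 0\longrightarrow \mathcal X_{r-1}\xrightarrow{\ \bar c_r\ }\mathcal X_{r-1}\longrightarrow Y\longrightarrow 0,\qquad Y:=\mathcal X_{r-1}/\bar c_r\mathcal X_{r-1}=\bar S_r\otimes_{\bar S_{r-1}}\mathcal X_{r-1}, \]
where as a graded algebra $Y=\bar S_r[S_1,\dots,S_{r-1}]\otimes\wedge[T_\bullet]$ carries the differential $\partial_Y$ induced from $\mathcal X_{r-1}$ by reducing coefficients modulo $\bar c_r$. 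The long exact homology sequence, combined with $H_0(\mathcal X_{r-1})=k$, $H_{>0}(\mathcal X_{r-1})=0$, and the fact that $\bar c_r\in\overline{\frak m}$ acts as $0$ on $k$, yields $H_0(Y)\cong k$, a connecting isomorphism $\delta\colon H_1(Y)\xrightarrow{\cong}k$, and $H_i(Y)=0$ for $i\ge 2$. Moreover $\xi_r$ is a $\partial_Y$-cycle, since $\partial_Y\xi_r=\sum_i\overline{c_{r,i}}\,\bar t_i=\bar c_r=0$ in $\bar S_r$; lifting it to $\sum_i\overline{c_{r,i}}\,T_i\in\mathcal X_{r-1}$ and applying $\partial$ gives $\bar c_r\cdot 1$, so $\delta[\xi_r]=[1]$ and therefore $[\xi_r]$ generates $H_1(Y)\cong k$.

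Now observe that $X=\mathcal X_r=Y[S_r]$ with $\partial S_r=\xi_r$, and filter it by powers of $S_r$: one has $X=\bigoplus_{m\ge 0}YS_r^m$ with $\partial=\partial_Y+\partial_S$, where $\partial_Y$ preserves the $S_r$-power and $\partial_S(yS_r^m)=(-1)^{\deg y}m\,(y\xi_r)S_r^{m-1}$. This filtration is exhaustive, bounded below, and finite in each total degree, so the associated spectral sequence converges. Its $E_1$-page is $H_\ast(Y)$ in each column $m\ge 0$, and by the previous paragraph only $E_1^{m,0}\cong k$ and $E_1^{m,1}\cong k$ are nonzero. The induced differential $d_1\colon E_1^{m,0}\to E_1^{m-1,1}$ sends the generator $[1]S_r^m$ to $\pm m\,[\xi_r]S_r^{m-1}$; since $[\xi_r]$ generates $H_1(Y)$ and $m$ is invertible in $k$ for $m\ge 1$, this is an isomorphism for every $m\ge 1$, while the remaining components of $d_1$ vanish because their target is $H_{\ge 2}(Y)=0$. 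Consequently every $E_1^{m,1}$ with $m\ge 0$ and every $E_1^{m,0}$ with $m\ge 1$ is killed, leaving only $E_2^{0,0}=H_0(Y)\cong k$ in total degree $0$. Thus $E_2=E_\infty$, giving $H_0(X)\cong k=\bar S/\overline{\frak m}$ and $H_i(X)=0$ for $i>0$. As each $X_i$ is $\bar S$-free, $X$ is the asserted free resolution, completing the induction.

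The main obstacle is the inductive step, and within it the two points on which it turns: first, pinning down $[\xi_r]$ explicitly as a generator of $H_1(Y)$, which is handled above by computing the connecting homomorphism $\delta$; and second, the collapse of the spectral sequence, which relies on the scalar $m$ in $\partial(S_r^m)=m\,\xi_r S_r^{m-1}$ being invertible. This is exactly where the hypothesis $\bb Q\subseteq S$ enters. In positive characteristic one would instead adjoin a divided-power variable, for which $\partial S_r^{(m)}=\xi_r S_r^{(m-1)}$ carries no scalar, and the identical argument would then go through.
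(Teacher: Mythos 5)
Your proof is correct, and it supplies an argument at a spot where the paper gives none: the paper states this result as Tate's and simply cites \cite[Theorem 4]{Tate}, adding afterwards only the remark that $X$ coincides with the stage $X^2$ of the general Tate construction (first adjoin the $T_i$ killing the generators $\bar t_i$ of $\overline{\frak m}$, then adjoin the $S_j$ killing the degree-one cycles $\xi_j=\sum_i\overline{c_{j,i}}\,T_i$). Your route is also genuinely different from Tate's own: he first computes the homology of $\bar S\otimes_S\wedge_S[T_1,\dots,T_n]$, i.e.\ $\on{Tor}^S_*(S/I,\,S/\frak m)$, as the exterior algebra over $S/\frak m$ on the free module spanned by the classes $[\xi_1],\dots,[\xi_r]$ (this is where the two regularity hypotheses are used jointly), and then kills all $r$ classes simultaneously by his general results on adjoining degree-two divided-power variables. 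You instead induct on $r$, changing the base ring one relation at a time: the short exact sequence $0\to\mathcal X_{r-1}\xrightarrow{\;\bar c_r\;}\mathcal X_{r-1}\to Y\to 0$ together with the snake computation $\delta[\xi_r]=[1]$ replaces the Koszul-homology computation, and the two-row spectral sequence of the $S_r$-adic filtration (in effect a Gysin sequence for adjoining one even variable) replaces the adjunction lemma. The pivots of your argument all check out: $\bar c_r$ is a nonzerodivisor on the free modules of $\mathcal X_{r-1}$, so the sequence of complexes is exact; the connecting map does identify $[\xi_r]$ as a generator of $H_1(Y)\cong k$; the filtration is exhaustive, bounded below, and finite in each total degree, so convergence is unproblematic; and the scalar $m$ in $\partial S_r^m=m\,\xi_r S_r^{m-1}$ is invertible precisely because of the standing hypothesis of Section~4 that $S$ contains a field of characteristic zero — your closing remark that divided powers remove this restriction is exactly Tate's general formulation. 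As for what each approach buys: Tate's computation yields more information (the full multiplicative structure of the Koszul homology, uniformly in all characteristics), while yours is elementary and self-contained, requires no prior knowledge of $\on{Tor}^S(\bar S,k)$, and isolates exactly where characteristic zero enters.
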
 

If we follow the construction of the Tate resolution to the map $ \bar S\to \bar S/\overline{\frak m}$, taking $ X^1=\wedge_{\bar S} [ T_1, \dots, T_n ]$ corresponding to the generators $ \{\bar t_1,\dots, \bar t_n\}$ of $\overline{\frak m}$, then the kernel of $\partial: X^1_1\to \bar S$ is generated as $ \bar S$-module by $\sum_{i=1}^n\overline{c_{j, i}} \, T_i$ ($1\leq j\leq r$) and $ \partial (T_iT_{i'})$.  Let the $S_i$'s be the basis of $ F_2$ and we form $X^2=X^1\otimes_{\bar S}\on{Sym}_{\bar S}(F_2[2])$. 
The above theorem says that $X=X^2$ for the ring $\bar S$ which is a complete intersection.

\subsection{Computing the extension groups $\on{Ext}^{*}_{S/I}(S/\frak m, S/\frak m)$}\label{sec:Yoneda_prod}
Consider the cochain complex $ \on{Hom}_{\bar{S}}(X, S/\frak m)$ with $ \on{Hom}^i_{\bar{S}}(X, S/\frak m)=\on{Hom}_{\bar{S}}(X_i, S/\frak m)$ consisting of all homogeneous $S$-module homomorphisms of degree $-i$ with $ S/\frak m$ regarded as a graded $ \bar{S}$-module concentrated in degree $0$. The differential $d^i: \on{Hom}^i_{\bar{S}}(X, S/\frak m)\to \on{Hom}^{i+1}_{\bar{S}}(X, S/\frak m)$ is defined by $ d^i(f)=f\circ \partial_{i+1}$. 
  We have  
\[\on{Ext}^{*}_{S/I}(S/\frak m, S/\frak m)=H^*(\on{Hom}_{\bar{S}}(X, S/\frak m)).\] 
Note that $\on{Ext}^{*}_{S/I}(S/\frak m, S/\frak m)$ are actually $\bar S/\overline{\frak m}=S/\frak m$-modules.   Suppose $ \partial_i(X_i)\subseteq \overline{\frak m} X_{i-1}$ for all $i$. Then the differential $d^{i-1}: \on{Hom}_{\bar{S}}(X_{i-1}, S/\frak m)\to \on{Hom}_{\bar{S}}(X_{i}, S/\frak m) $ is $0$. Thus we have $\on{Ext}^{i}_{\bar S}(S/\frak m, S/\frak m)=\on{Hom}_{\bar{S}}(X_i, S/\frak m)$ as $ S/\frak m$-module.  Note that
\[\on{Hom}_{\bar{S}}(X_i, S/\frak m)\cong \on{Hom}_{S/\frak m}(S/\frak m\otimes_{\bar S}X_i, S/\frak m).
\]
Since $ S/\frak m\otimes_{\bar S}X=S/\frak m[S_1,\dots, S_r]\otimes_{S/\frak m}\wedge_{S/\frak m}[ T_1, \dots, T_n] $ as differential graded $ S/\frak m$-algebras with zero differentials, then we have
\begin{align}\label{eq:ci-extension}
\on{Ext}^{*}_{S/I}(S/\frak m, S/\frak m)=\bigoplus_{k=0}^\infty (\on{Sym}^k_{ S/\frak m}( S_1,\dots, S_r))^*\otimes_{S/\frak m}\bigoplus _{l=0}^{n}(\wedge^l_{S/\frak m} [ T_1, \dots, T_n])^*
\end{align}
as graded $ S/\frak m$-modules, and each homogeneous component is free of finite rank. Here $\{T^*_1, \dots, T^*_n\}$ is the dual basis of $\{T_1, \dots, T_n\}$ of the free $S/\frak m$-module $\frak m/\frak m^2$ and $ \{S_1^*, \dots, S_r^*\}$ is the dual basis of $\{S_1, \dots, S_r\}$ of the $ S/I$-module $ I/I^2$.

 If  $c_j \in \frak m^2$, then $c_{j, i}\in \frak m$ for all  $i$ (and thus $\overline{c_{j,i}}=0$ in $S/\frak m$). Then applying $\partial (T_i)$ and $\partial (S_j)$ defined in the theorem, we have $\partial (S_1^{n_1}\cdots S_r^{n_r}T_1^{e_1}\cdots T_n^{e_n})\in \frak m X$. Here $ (n_1, \dots, n_r)\in \bb N^r$ and $ (e_1, \dots, e_n)\in \{0, 1\}^n$.  Thus under the assumption $ I\subseteq \frak m^2$, we see that \eqref{eq:ci-extension} holds.

\subsection{The Yoneda product}
We now describe the Yoneda product. For the chain complex $X_*$, we write $X_{\geq p}=\oplus_{i\geq p}X_i$. Then $X_{\geq p}$ is an ideal of $X$ and also a chain complex. However, $X_{\geq p}$ not a subcomplex of $X_i$ and thus not a differential graded ideal. It is rather a quotient complex of $X$ with projection $X \to X_{\geq p}$. We make $ X_{\geq p}$ a $X$-$X$-bimodule via left and right multiplications which satisfy  
\[ xv=(-l)^{\deg(v)\deg(x)}vx
\] for homogeneous $ v\in X_{\geq p}$ and $x \in X$. 

We also define the chain complex $X[p]$ with $(X[p])_n = X_{n-p}$ and $(\partial[p])_n = (-1)^{p}\partial_{n-p}$. Any $ \xi: X_p\to S/\frak m$ can be extended to a chain map $ \hat \xi :X_{\geq p}\to X[p]$ with $ \hat \xi_p: X_p \to X_0=S$ lifting $\xi$. Thus we can view $ \hat \xi$ as chain map $X \to X[p]$ via the quotient $X \to X_{ \geq p}$.  Hence $ \hat \xi_n\circ \partial_{n+1}=(-1)^p\partial _{n+1-p} \circ \hat \xi_{n+1}$ with $ \hat\xi_n: X_{n}\to X_{n-p}$.  If $ \zeta: X_q\to S/\frak m$ is another $\bar S$-module homomorphism with a lifting $\hat \zeta: X \to X_{\geq q}\to X[q]$ then we can define the composition of  chain maps
\[X \stackrel{\hat\xi}{\to}X[p]\stackrel{\hat\zeta[p]}{\to} X[p+q].\] 
Thus the Yoneda product $\zeta \xi: X_{p+q}\to S/\frak m$ is the composition of $ (\hat \zeta[p] \hat\xi)_{p+q}=\hat \zeta_q\hat \xi _{p+q}:X_{p+q}\to S$ and  $S \to S/ \frak m$, which is exactly $ \zeta\circ \hat \xi_{p+q}$. We remark that the product $\zeta\xi$ differs from the Yoneda composition $\zeta\circ \xi$ defined in \eqref{eq:Yoneda-composition}. They are related by $\zeta\circ \xi=(-1)^{p q}\zeta\xi$.  This construction is more convenient in the derived category sense.

The trouble in computing the Yoneda product is that the are many different liftings $ \hat \xi$ for each $ \xi$. However, the algebra structure on $X$ can help to find $\hat\xi_m$ so that it satisfies the derivation property
 \begin{align}\label{eq:derivation}
\hat\xi _{|u|+|v|}(u v)=\hat\xi_{|u|}(u)v+(-1)^{|u|p} u \hat\xi_{|v|}(v)
\end{align}
for all $ u, v \in X_{\geq p}$ for which $\hat\xi_{|u|}(u)$ and $ \hat\xi_{|v|}(v)$ have been defined. It can be directly verified that $ (-1)^{p}\partial \hat\xi(u)= \hat\xi (\partial (u))$ and $ (-1)^{p}\partial \hat\xi(v)= \hat\xi (\partial (v))$ implies $ (-1)^{p}\partial \hat\xi(uv)= \hat\xi (\partial (uv))$. 

We then use the fact that 
$X_m=\sum_{i=1}^nT_iX_{m-1} +\sum_{j=1}^r S_j X_{m-2}$  for any $m\geq 2$ to reduce the computation to $ \hat\xi(T_ix)$ and $ \hat\xi(S_ix)$.  For lower degrees, we will need to apply the following 
\begin{align}\label{eq:multi-differential}
(-1)^{p}\partial \hat\xi(uv)=\hat\xi (\partial(uv))=\hat\xi (\partial(u)v)+(-1)^{|u|}\hat\xi(u\partial(v))
\end{align}
for any $u, v\in X$ homogeneous to compute the $\hat\xi_m$, together with $\partial T_i=t_i$ and $\partial S_j=\sum_{i=1}^n \overline{c_{j, i}} \, T_i$. 

Recall that $T_i^*: X_1\to X_0$ is defined by $T_i^*(T_{i'})=\delta_{i, i'}$   and $ S_j^*: X_2\to X_0$ is defined by $S_j^*(S_{j'})=\delta_{j, j'}$ and $S_j^*(T_iT_{i'})=0$. 

Let  $\alpha_i=T_i^*\in \on{Ext}^{1}_{S/I}(S/\frak m, S/\frak m) $ and $ \beta_j=S_i^*\in \on{Ext}^{2}_{S/I}(S/\frak m, S/\frak m)$.  To calculate the multiplication relations among them, we first compute $  \hat \alpha_i$ and $ \hat\beta_j$. 
 We write $(\hat \alpha_i)_2(S_j)=-\sum_{l=1}^n \bar s_{j}^{i, l}T_l\in X_1$ for some $ \bar s_{j}^{i, l}\in \bar S$.  The equation $ \hat\alpha_i(\partial_2(S_j))=-\partial_1(\hat\alpha_i(S_j))$ is equivalent to the equation 
 \begin{align}\label{eq:c-s-equation}\bar c_{j,i}=\sum_{l=1}^n \bar s_j^{i, l} \bar t_l
 \end{align}
 having solutions $ \bar s_{j}^{i, l}$ in $\bar S$ for all $1 \leq j \leq r$ and $1 \leq i, l\leq n$. 
 Assume $ c_j\in \frak m^2$ and we choose $ s_{j}^{i, l}\in S$ such that write $ c_j=\sum_{i, l=1}^{n}s_{j}^{i, l}t_it_l$ in $ S$. Then $ \hat\alpha_2(S_j)$ can be defined. 
  Using \eqref{eq:derivation}, $ (\hat\alpha_i)_m$ can be defined for all $m$. Here we can compute $ \xi\alpha_i$ for all $ \xi\in \on{Ext}_{S/I}^q(S/\frak m, S/\frak m)$. Let us list $(\hat\alpha_i)_2$ and $ (\hat\alpha_i)_3$ for the convenience of later computations. 
 \[ (\hat\alpha_i)_2 (T_pT_q)=\delta_{i, p}T_q-\delta_{i, q}T_p\]
 \begin{align*}
 (\hat\alpha_i)_3 (T_lT_pT_q)&=\delta_{i, l}T_pT_q-\delta_{i, p}T_lT_q+\delta_{i, q}T_lT_p,\\
 (\hat\alpha_i)_3 (T_lS_j)&=\delta_{i, l}S_j+\sum_{k=1}^n \bar s_{j}^{i, k}T_lT_k.
 \end{align*}
 Thus, by defining
\begin{align*}
\begin{array}{rccl}
 (T_kT_i)^* : & X_2 & \longrightarrow& S/\frak m \\
  & T_pT_q  & \mapsto & \begin{cases} 1 \text { if } p=k, q=i ,\\
-1 \text{ if } p=i, q=k,\\
0 \text { otherwise, }
 \end{cases} \\
 & S_j & \mapsto & 0,
\end{array}
\end{align*}
we have
 \begin{align}\label{eq:alpha_product}\alpha_k\alpha_i=-\sum_{j=1}^r \bar s_j^{i, k}\beta_j-(T_kT_i)^*,
 \end{align} 
 and 
 \[ \alpha_k\alpha_i+\alpha_i\alpha_k=-\sum_{j=1}^{r}(\bar s_{j}^{i, k}+ \bar s_{j}^{k,i})\beta_j.\]
 We also have $\beta_j\hat\alpha_i(T_lT_pT_q)=0$ for all $ p, q, l$ distinct and $\beta_j\hat\alpha_i (T_pS_k)=\delta_{i, p}\delta_{j, k}$ so
 \[\beta_j\alpha_i=(T_iS_j)^*.\]
  
 To compute $ \beta_{j}\beta_{j'}$ and $\alpha_i\beta_j$, we need to compute $(\hat\beta_j)_3$ and $(\hat\beta_j)_4$ first. Using \eqref{eq:derivation}, we see that 
  \begin{align*}
  \hat\beta_j(xT_lT_k)&= \hat\beta_{j}(x)T_lT_k,\\
  \hat\beta_j(xS_p)&=\hat\beta_{j}(x)S_p+\delta_{j, p} y.
  \end{align*}
Then $(\hat\beta_j)_m$ can be computed. In particular, we can verify that $(\hat\beta_j)_m $ on the commutative algebra $ (S/I)[S_1, \dots, S_r]$ is given by the partial derivative $ \frac{\partial}{\partial S_j}$ and $ 0$ on $XT_kT_l$.  By writing $ (\hat\beta_j)_3(S_pT_i)=\sum_{l=1}^na_{j, p}^{i,l}T_l\in X_1$ and using \eqref{eq:multi-differential}, we see that 
\[ (\hat\beta_j)_3(S_pT_i)=\delta_{j, p}T_i=\hat\beta_j(S_p)T_i.
\] 
Thus $\hat\beta_i: X\to X[2]$ is a right $\wedge_{S/I} [T_1, \dots, T_n] $-module homomorphism.   Thus $\alpha_i\beta_j=(T_iS_j)^*=\beta_j\alpha_i$. By direct computation we show that $\beta_{j'}\beta_j=\beta_j\beta_{j'}$.  It follows from \eqref{eq:ci-extension} and the fact
 \[(S_1^{a_1}\cdots S_r^{a_r})^* =\frac{1}{(a_1!) \cdots (a_r!)} (S_1^*)^{a_1}\cdots (S_r^*)^{a_r}\]
 that  the subalgebra $S/\frak m \langle \beta_1, \dots, \beta_r\rangle $ is isomorphic to $S/\frak m[S_1^*, \dots, S_r^*]$.
  
 Let $ A=  S/\frak m[S_1^*, \dots, S_r^*]$ be the graded commutative algebra over $ S/\frak m$ with $ S_i^*$ in degree $2$. We consider the subalgebra  $A\langle \alpha_1, \dots, \alpha_n\rangle \subseteq  \on{Ext}^{*}_{S/I}(S/\frak m, S/\frak m)$.  By computing higher order products among $ \alpha_1, \dots, \alpha_n$ similar to computation of \eqref{eq:alpha_product}, we can see that $ (T_{1}^{e_1}\cdots T_{n}^{e_n})^*\in A\langle \alpha_1, \dots, \alpha_n\rangle $. By \eqref{eq:ci-extension} we get 
\begin{align}A\langle \alpha_1, \dots, \alpha_n\rangle =  \on{Ext}^{*}_{S/I}(S/\frak m, S/\frak m).
\end{align} 
  
  Let $\on{Cl}_A\langle x_1, \dots, x_n\rangle $ be the graded Clifford algebra (with $ x_i$ in degree 1) over $A$ with respect to the symmetric $A$-bilinear form 
  \begin{align}
   ( x_i, x_{i'})=-\sum_{j=1}^{r}(\bar s_{j}^{i,i'}+ \bar s_{j}^{i',i})\beta_j.
  \end{align}
  Then there is a natural surjective homomorphism of graded algebras $ \on{Cl}_A\langle x_1, \dots, x_n\rangle\to \on{Ext}^{*}_{S/I}(S/\frak m, S/\frak m)$. By using \eqref{eq:ci-extension} and counting the ranks of the free $A$-modules we get the following presentation of the algebra $\on{Ext}^{*}_{S/I}(S/\frak m, S/\frak m)$.

 Let us summarize the above discussion as follows. 

\begin{theorem} \label{thm:ci-ext-alg}
Let $t_1, \dots, t_n$ and $c_1, \dots, c_r$ be regular sequences of elements of a commutative ring $S$ containing $\bb Q $ and $I=\langle c_1, \dots, c_r\rangle$ and $\frak m=\langle t_1, \dots, t_n\rangle$ ideals of $S$. 
Assume  that $ I\subseteq \frak m^2$ and write $c_{j}=\sum_{i=1}^{n}\sum_{l=1}^ns_{j}^{i, l}t_it_l$ and $ \bar S=S/I$. Then $\on{Ext}^{*}_{ \bar S}(S/\frak m, S/\frak m)$ is isomorphic to the graded $ S/\frak m$-algebra
\[ \on{Ext}^{*}_{  S/I}(S/\frak m, S/\frak m)=(S/\frak m)[\beta_1,\dots, \beta_r]\langle \alpha_1, \dots, \alpha_n\rangle/\cal I.\]
Here $(S/\frak m)[\beta_1,\dots, \beta_r]$ is the polynomial with variables $ \beta_1, \dots, \beta_r$ with $\deg(\beta_j)=2$ and $\deg(\alpha_i)=1$. The ideal $\cal I$ is generated by the relations $ \alpha_i\alpha_k+\alpha_k\alpha_i +\sum_{j=1}^r(\bar s_{j}^{i,k}+\bar s_{j}^{k,i})\beta_j$ over the polynomial ring $(S/\frak m)[\beta_1,\dots, \beta_r]$.  \end{theorem}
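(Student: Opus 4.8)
The plan is to assemble the products computed above into the stated presentation and then to certify completeness of the relations by a rank count. Everything needed is already in hand: the classes $\beta_1,\dots,\beta_r$ generate the polynomial subalgebra $A=(S/\mathfrak m)[\beta_1,\dots,\beta_r]$ (using $\mathbb Q\subseteq S$ to write $(S_1^{a_1}\cdots S_r^{a_r})^{*}=\tfrac{1}{a_1!\cdots a_r!}\beta_1^{a_1}\cdots\beta_r^{a_r}$), each $\beta_j$ is central because $\alpha_i\beta_j=\beta_j\alpha_i$, and the anticommutators of the degree-one classes are given by \eqref{eq:alpha_product}, namely $\alpha_i\alpha_k+\alpha_k\alpha_i=-\sum_{j=1}^r(\bar s_j^{i,k}+\bar s_j^{k,i})\beta_j$ in $A$. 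Hence the assignment $x_i\mapsto\alpha_i$, extended $A$-linearly and multiplicatively, respects precisely the Clifford relations for the symmetric form $(x_i,x_{i'})=-\sum_j(\bar s_j^{i,i'}+\bar s_j^{i',i})\beta_j$, which is what the ideal $\mathcal I$ encodes. First I would record that this yields a homomorphism of graded $A$-algebras
\[
\Psi:\ \on{Cl}_A\langle x_1,\dots,x_n\rangle=A\langle\alpha_1,\dots,\alpha_n\rangle/\mathcal I\ \longrightarrow\ \on{Ext}^{*}_{S/I}(S/\mathfrak m,S/\mathfrak m),
\]
and that $\Psi$ is surjective, since the computation establishing $(T_1^{e_1}\cdots T_n^{e_n})^{*}\in A\langle\alpha_1,\dots,\alpha_n\rangle$ already shows $A\langle\alpha_1,\dots,\alpha_n\rangle=\on{Ext}^{*}_{S/I}(S/\mathfrak m,S/\mathfrak m)$.

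To prove $\Psi$ is injective I would compare free ranks over $A$. By \eqref{eq:ci-extension} the target is a free $A$-module with basis the duals $(T_{i_1}\cdots T_{i_l})^{*}$ indexed by the subsets $\{i_1<\cdots<i_l\}$ of $\{1,\dots,n\}$, hence of rank $2^n$. On the source, the standard filtration argument for Clifford algebras—whose associated graded algebra is the exterior algebra $\wedge_A(x_1,\dots,x_n)$—shows that $\on{Cl}_A\langle x_1,\dots,x_n\rangle$ is a free $A$-module with the ordered monomial basis $\{x_{i_1}\cdots x_{i_l}\ :\ i_1<\cdots<i_l\}$, again of rank $2^n$; the $\mathbb Z$-grading by cohomological degree is compatible with the Clifford filtration, so this basis is homogeneous. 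Thus $\Psi$ is a surjection of free $A$-modules of equal finite rank. Composing with any $A$-module isomorphism between source and target turns $\Psi$ into a surjective endomorphism of a finitely generated $A$-module, which is automatically injective; therefore $\Psi$ is an isomorphism of graded algebras, which is exactly the asserted presentation.

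The only genuinely delicate input is the second point: the freeness and rank $2^n$ of the Clifford algebra over the non-field ring $A$. Here one must invoke the base-ring-independent PBW theorem, realizing $\on{Cl}_A$ as a filtered flat deformation of $\wedge_A$, rather than any dimension count over a field, since $S/\mathfrak m$ need not be a field and $2$ need not be treated specially. Once both sides are identified as free $A$-modules of the same rank with matching monomial bases, completeness of the relations generating $\mathcal I$ is automatic from the surjection-of-equal-rank argument, and no separate Gröbner-basis or straightening computation is required.
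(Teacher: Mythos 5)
Your proof is correct and takes essentially the same route as the paper: the paper likewise builds the natural surjection $\on{Cl}_A\langle x_1,\dots,x_n\rangle \to \on{Ext}^{*}_{S/I}(S/\frak m, S/\frak m)$ from the product computations (centrality of the $\beta_j$, the anticommutator relation \eqref{eq:alpha_product}, and $(T_1^{e_1}\cdots T_n^{e_n})^{*}\in A\langle \alpha_1,\dots,\alpha_n\rangle$) and then concludes by comparing the ranks, both equal to $2^n$, of the free $A$-modules on the two sides via \eqref{eq:ci-extension}. Your explicit injectivity argument (PBW basis for the graded Clifford algebra over the commutative base $A$, together with the fact that a surjective endomorphism of a finitely generated $A$-module is injective) simply fills in the step the paper leaves as ``counting the ranks of the free $A$-modules.''
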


\begin{corollary} \label{cor:deg3-ci-Ext algebra}
Under the assumption of Theorem~\ref{thm:ci-ext-alg}, if we further assume that $ I\subseteq \frak m^3$, then $ \overline{c_{j, i}}=0$ in $ S/\frak m$ and 
\[\on{Ext}^{*}_{  S/I}(S/\frak m, S/\frak m)=(S/\frak m)[\beta_1,\dots, \beta_r]\otimes_{S/\frak m}\wedge_{S/\frak m}[\alpha_1, \dots, \alpha_n]\]
is a commutative differential graded $ S/\frak m$-algebra with  zero differential. 
\end{corollary}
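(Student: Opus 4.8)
The plan is to deduce the corollary directly from the presentation of $\on{Ext}^{*}_{S/I}(S/\frak m, S/\frak m)$ furnished by Theorem~\ref{thm:ci-ext-alg}, the only new ingredient being that the strengthened hypothesis $I\subseteq \frak m^3$ forces the symmetric form controlling the Clifford relations to vanish. First I would note that each $c_j\in I\subseteq \frak m^3=\frak m\cdot\frak m^2$, and since $\frak m^2$ is generated by the products $t_it_l$, one can choose a presentation $c_j=\sum_{i,l}s_j^{i,l}t_it_l$ in which every coefficient $s_j^{i,l}$ already lies in $\frak m$. Consequently all the images $\bar s_j^{i,l}$ in $S/\frak m$ vanish. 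Setting $c_{j,i}=\sum_l s_j^{i,l}t_l$, which now lies in $\frak m^2$, gives $\overline{c_{j,i}}=0$ in $S/\frak m$; this is the first assertion of the corollary, and through the criterion $\partial_i(X_i)\subseteq\overline{\frak m}X_{i-1}$ established in Section~\ref{sec:Yoneda_prod} it also certifies that the cochain differential $d$ is identically zero.

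Next I would substitute $\bar s_j^{i,k}=0$ into the relations defining the ideal $\cal I$ of Theorem~\ref{thm:ci-ext-alg}. The generators $\alpha_i\alpha_k+\alpha_k\alpha_i+\sum_{j=1}^{r}(\bar s_j^{i,k}+\bar s_j^{k,i})\beta_j$ collapse to $\alpha_i\alpha_k+\alpha_k\alpha_i$, so the $\alpha_i$ anticommute over the central polynomial subalgebra $A=(S/\frak m)[\beta_1,\dots,\beta_r]$. Taking $i=k$ yields the relation $2\alpha_i^2=0$, and since $S\supseteq\bb Q$ the element $2$ is invertible, whence $\alpha_i^2=0$. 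This division by $2$ is precisely the place where the characteristic-zero hypothesis on $S$ is used.

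Equivalently, in the Clifford-algebra formulation of the theorem the symmetric $A$-bilinear form $(x_i,x_{i'})=-\sum_j(\bar s_j^{i,i'}+\bar s_j^{i',i})\beta_j$ is now identically zero, so the Clifford algebra $\on{Cl}_A\langle x_1,\dots,x_n\rangle$ degenerates to the exterior algebra $\wedge_A[x_1,\dots,x_n]$. The canonical surjection $\on{Cl}_A\langle x_1,\dots,x_n\rangle\to\on{Ext}^{*}_{S/I}(S/\frak m,S/\frak m)$ is an isomorphism by the same rank count against \eqref{eq:ci-extension} already performed in the theorem, giving
\[\on{Ext}^{*}_{S/I}(S/\frak m,S/\frak m)\cong (S/\frak m)[\beta_1,\dots,\beta_r]\otimes_{S/\frak m}\wedge_{S/\frak m}[\alpha_1,\dots,\alpha_n].\]
Since the $\beta_j$ lie in even degree and are central while the $\alpha_i$ are odd, mutually anticommuting, and square-zero, this algebra is strictly skew-commutative in the sense of Section~\ref{SectionTate}; combined with the vanishing of $d$, this exhibits it as a commutative differential graded $S/\frak m$-algebra with zero differential.

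I do not anticipate a genuine obstacle, as the statement is a specialization of Theorem~\ref{thm:ci-ext-alg}. The two points meriting care are the choice of the $s_j^{i,l}$ inside $\frak m$ (which genuinely needs $I\subseteq\frak m^3$ rather than merely $\frak m^2$, as the latter only kills $\overline{c_{j,i}}$ but not the quadratic coefficients $\bar s_j^{i,k}$) and the use of $\bb Q\subseteq S$ to pass from $2\alpha_i^2=0$ to $\alpha_i^2=0$.
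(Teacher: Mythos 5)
Your proof is correct and takes essentially the same approach as the paper's (two-sentence) proof: the hypothesis $I\subseteq\frak m^3$ allows the coefficients $s_j^{i,l}$ to be taken in $\frak m$, so the symmetric form in the Clifford relations of Theorem~\ref{thm:ci-ext-alg} vanishes and the presentation collapses to $(S/\frak m)[\beta_1,\dots,\beta_r]\otimes_{S/\frak m}\wedge_{S/\frak m}[\alpha_1,\dots,\alpha_n]$. The details you supply beyond the paper — that $\frak m^3=\frak m\cdot\frak m^2$ justifies the choice of $s_j^{i,l}\in\frak m$, and that $\bb Q\subseteq S$ is needed to pass from $2\alpha_i^2=0$ to $\alpha_i^2=0$ — are exactly the points the paper leaves implicit.
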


\begin{proof}
The commutativity is equivalent to $s_{j}^{i,k}+s_{j}^{k,i}\in\frak m$. The assumption $I\subseteq \frak m^3 $ implies that $ s_{j}^{i,k}\in \frak m$. 
\end{proof}

\subsection{The cohomology of the modified complete intersection ring}\label{ComputeH*(Rtilde)}

Let us set $S=\bfk[t_1,\dots, t_n]$ the polynomial algebra with standard grading and  $R=S/ \langle c_1,\dots, c_r \rangle $ where $c_1, \dots, c_r$ are homogeneous polynomials of degree at least $2$  such that they form a minimal set of generators of the ideal $ \langle c_1,\dots, c_r \rangle$. By \textbf{minimal}, we mean that for any $ 1\leq i\leq r$, $ c_i\not\in\sum_{j\neq i} S c_j$.   
In particular, if all $ c_1, \dots, c_r$ are homogeneous of the same degree and are $\bfk$-linearly independent, then they form a minimal set of generators of the ideal they generate.

Since the $c_i$'s have degree at least $2$, in what follows, for all $1 \leq j \leq r$, we will write $c_j=\sum_{i=1}^n c_{j, i}t_i=\sum_{i, l=1}^n s_j^{i,l}t_{i}t_l$, with $s_{j}^{i, l} \in S=\bfk[t_1,\dots, t_n]$. The matrix $(s_{j}^{i,l})$ for each $j$ is  not unique and we will fix them for the convenience of computations in what follows. Since the sequence $\{c_1, \dots, c_r\}$ is not necessarily a regular sequence, we consider the polynomial algebra $ \tilde S=\bfk[\tilde t_1, \dots \tilde t_n, \tilde t_{n+1},\dots, \tilde t_{n+r}] $ and define the quotient map $\pi: \tilde S\to S$ by setting $ \tilde t_i \mapsto t_i$ for $i\leq n$ and $\tilde t_i \mapsto 0$ for $i> n$.  We then set $ \tilde c_j=c_j -\tilde t_{n+j}^3$ for all $1 \leq  j \leq r$. Here we have simply identified $S$ with the subalgebra $\bfk[\tilde t_1, \dots, \tilde t_n]$ of $ \tilde S$ under the map $ t_i\to \tilde t_i$, and will use the same notation for the elements of $S$ as elements of $ \tilde S$. When this occurs, it will be clear from context. Thus $c_j$ is regarded as an element of $ \tilde S$ in the expression of $ \tilde c_j$.  Then $ \{\tilde c_1, \dots, \tilde c_r\}$ is a regular sequence in $ \tilde S$. We see that the ring $ \tilde R=\tilde S/\langle \tilde c_1, \dots, \tilde c_r\rangle $ is a complete intersection, and we will use $ \pi: \tilde R\to R$ to denote the induced quotient map.  Let $ \frak m\subseteq S$ and $ \tilde{\frak m} \subseteq \tilde S$  be the ideals generated by $ \{t_1, \dots, t_n\}$ and $ \{\tilde t_1, \dots, \tilde t_{n+r}\} $ respectively. Then $ \pi(\tilde{\frak m})=\frak m$. Note that $ \bfk=S/\frak m=\tilde S/\tilde{\frak m}$ and this induces augmentations $ R\to \bfk$ and $\tilde R\to \bfk$. We will use $ \overline{\frak m}\subseteq R$ and $ \overline{\tilde{\frak m}}\subseteq \tilde R $ to denote the corresponding augmentation ideals.  

Similarly, we write $\tilde c_j=\sum_{i, l=1}^{n+r} \tilde s_{j}^{i,l}\tilde t_i\tilde t_l\in \tilde S$.  Then, for each $j$, we have 
\begin{align}
 \tilde s_j^{i,l} =\begin{cases} s_{j}^{i, l} & \; \text { if } i, l\leq n ,
\\
-\tilde t_i & \; \text{ if } i=l >n,\\
0& \; \text { otherwise. }
 \end{cases}
\end{align}

\delete{Geometrically, the presentation $ \bfk[t_1, \dots, t_n]/\langle c_1, \dots, c_r\rangle $ defines a closed imbedding of $ \on{Spec}(R)\subseteq \bb A^n$. While $ \on{Spec}(\widetilde R)\subseteq \bb A^{n+r}$ is a closed and $ \bb A^n\subseteq \bb A^{n+r}$ are closed subschemes, then $ \on{Spec}(R)= \on{Spec}(\widetilde R)\cap \bb A^n$ in $ \bb A^{n+r}$. The ideals $ \frak m$ correspond to the point $0$ in each of the schemes. 
}

Then $\pi (\overline{\tilde{\frak m}})
=\overline{\frak m}$.
The $\bfk$-algebra morphism $\pi: \widetilde R\to R$  induces a graded algebra homomorphism  $\pi^{\#} : \on{Ext}^*_R(\bfk, \bfk) \to  \on{Ext}^*_{\tilde R}(\bfk, \bfk) $ by Corollary~\ref{cor:ext-hom}. In the following, we describe the image of $ \pi^{\#}$.  

By following the construction (or the proof) of Theorem~\ref{thm:compatible_tate_resolution}, we construct
\begin{align}
\tilde X^2&=\tilde R[\tilde S_1, \dots, \tilde S_r]\otimes_{\tilde R} \wedge_{\tilde R} [ \tilde T_1, \dots, \tilde T_{n+r}],\label{eq:tilde-X2}\\
X^2&= R[ S_1, \dots,  S_r]\otimes_R \wedge_{ R} [  T_1, \dots,  T_{n} ]. \label{eq:X2}
\end{align}
The map $ \pi: \tilde X^2\to X^2$ is defined by applying $R\otimes_{\tilde R}-$ and sending $ \tilde T_i$ to $0$ for all $i>n$. 
Since $ \tilde R$ is a complete intersection, then $ \tilde X=\tilde X^2$ by Theorem~\ref{TateTheorem}.
Note that in the inductive step of constructing $ \tilde X^{j+1}$  in the proof of Theorem~\ref{thm:compatible_tate_resolution} for $ j\geq 2$, the choice of $ \tilde C_j$ can be empty since $ B_j(
X^j)=Z_j(X^j)$. Thus the differential algebra homomorphism $ \tilde X\to X$ has its image onto $X^2 \subseteq X$. This induces a surjective graded $\bfk$-module homomorphism 
\begin{align}
\on{Hom}_{R}(X, \bfk)\stackrel{\iota^*}{\twoheadrightarrow } \on{Hom}_{R}(X^2, \bfk)\hookrightarrow \on{Hom}_{\tilde R}(\tilde X^2, \bfk)
\end{align}
since  homogeneous components of $X$ and $ X^2$ are free $R$-modules of finite rank and $ X^2_m$ is  direct summand of $X_m$.  

Since $ c_1, \dots, c_r\in \mathfrak m^2$ (resp. $ \tilde c_1, \dots, \tilde c_r\in \tilde{\mathfrak m}^2$), then $ \partial X^2_m\subseteq \overline{\mathfrak m} X^2_{m-1}$ (resp. $ \partial \tilde X^2_m\subseteq \overline{\tilde{\frak m}} \tilde X^2_{m-1}$). Thus we have 
\begin{align}\on{Ext}^{*}_{R}(\bfk, \bfk) \stackrel{\iota^\#}{\to} H^*(\on{Hom}_{R}(X^2, \bfk))=\on{Hom}_{R}(X^2, \bfk)\hookrightarrow \on{Ext}^{*}_{\tilde R}(\bfk, \bfk)
\end{align}
as graded $ \bfk$-modules. The composition gives a graded $\bfk$-algebra homomorphism 
\begin{align}
\on{Ext}^{*}_{R}(\bfk, \bfk) \stackrel{\pi^{\#}}{\to} \on{Ext}^{*}_{\tilde R}(\bfk, \bfk)
=\bfk[\tilde\beta_1,\dots, \tilde\beta_r]\langle \tilde \alpha_1, \dots, \tilde\alpha_{n+r}\rangle/\cal I
\end{align}
determined by  Corollary~\ref{cor:ext-hom} and Theorem~\ref{thm:ci-ext-alg}.

The $\bfk$-subspace $ \on{Hom}_{R}(X^2, \bfk)$ in $\on{Ext}^{*}_{\tilde R}(\bfk, \bfk)$ is exactly the subalgebra whose generators are $ \{\tilde \beta_{1}, \dots, \tilde\beta_r\}$  and $ \{ \tilde \alpha_1,\dots, \tilde \alpha_n\}$ by \eqref{eq:tilde-X2} and \eqref{eq:X2}. This $\bfk$-subalgebra is presented by
\begin{align} \bfk[\tilde\beta_1,\dots, \tilde\beta_r]\langle \tilde \alpha_1, \dots, \tilde\alpha_{n}\rangle/\cal J
\end{align}
with 
\[\cal J=\langle 
 \alpha_i\alpha_l+\alpha_l\alpha_i+\sum_{j=1}^r (\bar s_j^{i, l}+\bar s_j^{l, i})\beta_j\rangle
 \]
 with $ 1\leq i, l\leq n$.

Our goal is to prove that $\iota^\#:\on{Ext}^{*}_{R}(\bfk, \bfk) \to \on{Hom}_{R}(X^2, \bfk)$ is surjective by constructing cocycles $\alpha_i$ and $\beta_j$ in $\on{Hom}_{R}(X, \bfk)$ such that $ \pi^\#(\alpha_i)=\tilde \alpha_i $ and $ \pi^\#(\beta_i)=\tilde \beta_j$ (for $ i=1, \dots, n$ and $ j=1, \dots, r$). The surjectivity of $\iota^\#$ will follow since $ \pi^\#$ is an algebra homomorphism.

It follows from the construction of Theorem~\ref{thm:tate-resolution-ideals} that we have $ X_i=X^2_i$  and 
\[\on{Hom}_{R}(X_i, \bfk)=\on{Hom}_{R}(X^2_i, \bfk)
\]
for  $i\leq 2$.  
We note that $Z_i(X)=Z_i(X^2)$ for $ i\leq 2$ and $B_i(X)=B_i(X^2)$ for $ i< 2$. Therefore $\on{Ext}_R^1(\bfk,\bfk)=\on{Hom}_R(X^2_1, \bfk)$ has a $\bfk$-basis given by the $\alpha_i=T_i^*$ for $ i=1, \dots, n$ and $ \pi^\#(\alpha_i)=\tilde\alpha_i$.  However,
we only have $  Z_2(X)=B_2(X)\supseteq B_2(X^2)$. We need the following lemma. 

\begin{lemma}
If $\sum_{j =1}^{r}\overline{s_j} S_j+\sum_{1\leq i <m\leq n}\overline{a_{i, m}} \, T_iT_m \in Z_2(X^2)$ (with $ s_j, a_{i,m}\in S$), then all $ s_1, \dots,  s_r \in \frak m$. Consequently  $S_j^*\in \on{Hom}_{R}(X_i, \bfk)$ is a cocyle. 
\end{lemma}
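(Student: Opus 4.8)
The plan is to convert the cycle condition $\partial_2\omega=0$ into a syzygy among $c_1,\dots,c_r$ and then exploit the minimality of the generating set. Write $\omega=\sum_{j=1}^r\overline{s_j}S_j+\sum_{i<m}\overline{a_{i,m}}T_iT_m$. Since $\partial_2$ preserves the internal grading inherited from $S$ (with $\deg T_i=1$, so that $\deg S_j=\deg c_j=:d_j\ge 2$ and $\deg T_iT_m=2$), the module $Z_2(X^2)$ is graded, and I may assume $\omega$ homogeneous of internal degree $N$; then each $\overline{s_j}$ is homogeneous of degree $N-d_j$ and each $\overline{a_{i,m}}$ of degree $N-2$. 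A coefficient $s_j$ automatically lies in $\frak m$ unless $N=d_j$, so it suffices to prove $s_j=0$ for $j\in J:=\{j:d_j=N\}$, where $\overline{s_j}\in R_0=\bfk$ and $s_j$ may be taken constant.

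First I would make the cycle condition explicit. Using $\partial_2 S_j=\sum_i\overline{c_{j,i}}T_i$ and $\partial_2(T_iT_m)=\overline t_iT_m-\overline t_mT_i$, collecting the coefficient of each $T_l$ in $\partial_2\omega=0$ and lifting to $S$ gives
\[ E_l:=\sum_j s_j c_{j,l}+\sum_{i<l}a_{i,l}t_i-\sum_{m>l}a_{l,m}t_m\in\langle c_1,\dots,c_r\rangle,\qquad 1\le l\le n, \]
with $E_l$ homogeneous of degree $N-1$. Hence I may write $E_l=\sum_k b_{l,k}c_k$ with $b_{l,k}\in S$ homogeneous of degree $N-1-d_k$. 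Multiplying the relation $E_l=\sum_k b_{l,k}c_k$ by $t_l$ and summing over $l$, the identity $c_j=\sum_l c_{j,l}t_l$ turns the first sum into $\sum_j s_jc_j$, while the two $a$-sums cancel after relabelling the index pairs; this produces the homogeneous syzygy
\[ \sum_{j=1}^r f_j\,c_j=0,\qquad f_j:=s_j-\sum_l b_{l,j}t_l, \]
where each $f_j$ is homogeneous of degree $N-d_j$.

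The decisive point is the degree bookkeeping for $j\in J$: there $N-d_j=0$, so $b_{l,j}$ has degree $-1$ and therefore vanishes, whence $f_j=s_j\in\bfk$ is a genuine constant. If some $s_{j_0}\ne 0$ with $j_0\in J$, then dividing the syzygy by the unit $s_{j_0}$ expresses $c_{j_0}=-s_{j_0}^{-1}\sum_{j\ne j_0}f_jc_j\in\langle c_j:j\ne j_0\rangle$, contradicting the minimality of $\{c_1,\dots,c_r\}$. Thus $s_j=0$ for all $j\in J$, and every $s_j\in\frak m$; the non-homogeneous case then follows by applying this to the internal-degree-$d_j$ homogeneous component of $\omega$ for each $j$. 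I expect this grading argument—verifying that the offending coefficients are honest constants, so that minimality actually bites—to be the only real obstacle, the remainder being the bookkeeping above.

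Finally, for the stated consequence, note that $X_2=X^2_2$ and $X_1=X^2_1$, so $Z_2(X)=Z_2(X^2)$, and acyclicity of the full Tate resolution $X$ gives $B_2(X)=Z_2(X)=Z_2(X^2)$. For $\omega\in Z_2(X^2)$ the $R$-linear map $S_j^*$ evaluates to $\epsilon(\overline{s_j})\in\bfk$, which is $0$ since $s_j\in\frak m$. Hence $S_j^*$ vanishes on $B_2(X)=\on{Im}\partial_3$, i.e. $S_j^*\circ\partial_3=0$, so $S_j^*\in\on{Hom}_{R}(X_2,\bfk)$ is a cocycle.
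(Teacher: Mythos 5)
Your proof is correct and follows essentially the same route as the paper's: both expand $\partial_2\omega=0$, extract the coefficient of each $T_l$, lift to $S$, multiply by $t_l$ and sum over $l$ so that the antisymmetric $a$-terms cancel, and then invoke the minimality of $\{c_1,\dots,c_r\}$ via the grading to kill the constant terms of the $s_j$, with the cocycle claim for $S_j^*$ deduced in both cases from $\partial_3 X_3=B_2(X)=Z_2(X)=Z_2(X^2)$. The only cosmetic difference is that you reduce to a homogeneous cycle at the outset and produce an honest syzygy $\sum_j f_j c_j=0$ with constant $f_{j}$ for $j\in J$, whereas the paper keeps $\omega$ general, normalizes $s_1=1+s'$, and projects onto the degree-$\deg(c_1)$ component at the very end — the same grading argument in different packaging.
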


\begin{proof}
We apply $ \partial_2$ to the element in the statement and we get 
\begin{align*}
0 &\displaystyle  =\sum_{j =1}^r\overline{s_j} (\sum_{i=1}^n\overline{c_{j, i}} \, T_i)+\sum_{1\leq i <m\leq n} \overline{a_{i, m}} (\bar t_iT_m-\bar t_mT_i), \\
 & \displaystyle = 
 \sum_{i=1}^n(\sum_{j =1}^r\overline{s_j} \, \overline{c_{j, i}}+\sum_{m<i}\bar t_m \overline{a_{m, i}}-\sum_{i<m}\bar t_m \overline{a_{i, m}})T_i.
\end{align*}
Since $T_i$ are basis elements of a free module, we get that for all $1 \leq i \leq n$:
\begin{align*}
\sum_{j=1}^{r}\overline{s_j} \, \overline{c_{j, i}}+\sum_{m<i}\bar t_m \overline{a_{m, i}}-\sum_{i<m}\bar t_m \overline{a_{i, m}}=0 \text{ in } R.
\end{align*}
Writing in $S$ we have, for $1\leq i \leq n$,
\begin{align*}
\sum_{j=1}^{r}s_j  c_{j, i}+\sum_{m<i} t_m a_{m, i}-\sum_{i<m} t_m a_{i, m}=\sum_{l=1}^r s_{i,l} c_l.
\end{align*}
for $s_{i, l}\in S$. By multiplying the expression corresponding to $i$  by $t_i$ and then summing over $i$, the last two terms cancel each other and we get 
\begin{align*}
\sum_{j =1}^r s_j c_j=\sum_{l=1}^r\sum_{i=1}^n t_i s_{i, l}c_l
\end{align*}
in $\bfk[t_1, \dots, t_n]$. If one of the $s_j\not \in \frak m$, say $j=1$, we can assume the constant term of $s_1$ is $1$ and write $ s_1=1+s'$ with  $ s'\in \frak m$. Therefore 
\[ (1+s'-\sum_{i=1}^n t_is_{i, 1})c_1 \in \sum_{j=2}^rS c_j.\]
Since $c_1, \dots, c_r$ are homogeneous, by applying the projection map $S\to S_d$ to the homogeneous component of degree $ d=\deg (c_1)$, we get $ c_1=\sum_{j=2}^r b_j c_j$ with $\deg (b_j)+\deg (c_j)=d$ if $ b_j c_j\neq 0$ for homogeneous $b_j \in S$. This contradicts  the minimality condition of $\{c_1, \dots, c_r\}$. 

The second part of the statement follows from the fact that $d^2(S_j^*)=S_j^*\circ \partial_3$ and $ \partial_3 X_3=B_2(X)=Z_2(X^2)$. 
\end{proof}

We now summarize the result as follows.
 
\begin{theorem}\label{thm:quotientofExt} 
Let  $ R=\bfk[t_1,\dots, t_n]/I$. Suppose that  $ \{ c_1, \dots, c_r\}$ is  a minimal set of generators of homogeneous elements of degree at least 2 of the ideal $ I$. Write $ c_{j}=\sum_{i, i'=1}^n s_{j}^{i, i'}t_it_{i'}$. Then there is a surjective homomorphism of graded algebras
\begin{align}
\on{Ext}^*_{R}(\bfk, \bfk)\twoheadrightarrow \bfk[\beta_1, \dots, \beta_r]\langle \alpha_1, \dots, \alpha_n\rangle/\cal J.
\end{align}
Here $ \cal J=\langle \alpha_i\alpha_{i'}+\alpha_{i'}\alpha_i+\sum_{j=1}^{r}(\bar s_{j}^{i,i'}+ \bar s_{j}^{i',i})\beta_j \ | \ 1 \leq i, i' \leq n \rangle$. 
\end{theorem}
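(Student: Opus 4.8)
The plan is to assemble the pieces already set up in this subsection and reduce the theorem to the surjectivity of a single algebra homomorphism. By Corollary~\ref{cor:ext-hom} applied to the surjection $\pi: \tilde R \to R$, we have the graded algebra homomorphism $\pi^{\#}: \on{Ext}^*_R(\bfk, \bfk) \to \on{Ext}^*_{\tilde R}(\bfk, \bfk)$, and by the compatible Tate resolutions of Theorem~\ref{thm:compatible_tate_resolution} this factors as a map $\iota^{\#}: \on{Ext}^*_R(\bfk, \bfk) \to \on{Hom}_R(X^2, \bfk)$ followed by the embedding of $\on{Hom}_R(X^2, \bfk)$ as the subalgebra of $\on{Ext}^*_{\tilde R}(\bfk, \bfk)$ generated by $\tilde\alpha_1, \dots, \tilde\alpha_n, \tilde\beta_1, \dots, \tilde\beta_r$. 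Since by Theorem~\ref{thm:ci-ext-alg} and the discussion above this subalgebra carries exactly the presentation $\bfk[\beta_1, \dots, \beta_r]\langle \alpha_1, \dots, \alpha_n\rangle/\cal J$, it suffices to prove that $\iota^{\#}$ is surjective.

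Because the target $B := \on{Hom}_R(X^2, \bfk)$ is generated as a $\bfk$-algebra by the degree-one classes $\tilde\alpha_i$ (for $1 \leq i \leq n$) and the degree-two classes $\tilde\beta_j$ (for $1 \leq j \leq r$), and $\iota^{\#}$ is an algebra homomorphism, it is enough to produce cohomology classes in $\on{Ext}^*_R(\bfk, \bfk)$ mapping onto these generators. The degree-one case is immediate: the Tate resolution $X$ of $R \to \bfk$ satisfies $X_i = X^2_i$ for $i \leq 2$, and since $c_1, \dots, c_r \in \frak m^2$ one has $\partial_2 X^2_2 \subseteq \overline{\frak m} X^2_1 = \overline{\frak m} X_1$; as $\alpha_i = T_i^*$ is $\bfk$-valued it annihilates $\overline{\frak m} X_1$, so $\alpha_i \circ \partial_2 = 0$ and $\alpha_i$ is a cocycle with $\iota^{\#}(\alpha_i) = \tilde\alpha_i$.

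The crux, which I expect to be the main obstacle, is the degree-two case, where the full Tate resolution $X$ genuinely differs from $X^2$. The element $\beta_j = S_j^* \in \on{Hom}_R(X_2, \bfk)$ is a cocycle precisely when $S_j^*$ annihilates $\partial_3(X_3) = B_2(X)$; but acyclicity forces $B_2(X) = Z_2(X) = Z_2(X^2)$, which strictly contains $B_2(X^2)$. Thus I must show $S_j^*$ vanishes on all of $Z_2(X^2)$, not merely on boundaries internal to $X^2$. This is exactly the content of the Lemma preceding the theorem: writing a cycle in $Z_2(X^2)$ in the standard basis as $\sum_j \overline{s_j} S_j + \sum_{i<m} \overline{a_{i,m}} T_i T_m$, applying $\partial_2$ and comparing coefficients of the free basis elements $T_i$ produces a relation among the $c_j$ in $S$, and the minimality of $\{c_1, \dots, c_r\}$ together with homogeneity forces every $s_j \in \frak m$. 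Hence $S_j^*$ annihilates $Z_2(X^2)$, so each $\beta_j$ is a cocycle with $\iota^{\#}(\beta_j) = \tilde\beta_j$.

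With all the generators of $B$ lying in the image of the algebra homomorphism $\iota^{\#}$, surjectivity follows, and composing $\iota^{\#}$ with the identification $B \cong \bfk[\beta_1, \dots, \beta_r]\langle \alpha_1, \dots, \alpha_n\rangle/\cal J$ yields the desired surjection. I would close by observing that the only input specific to $R$, as opposed to the complete intersection $\tilde R$, is the minimality hypothesis on $\{c_1, \dots, c_r\}$, which enters solely through the Lemma; everything else is transported from $\tilde R$ via the compatible resolutions.
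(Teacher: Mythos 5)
Your proposal is correct and follows essentially the same route as the paper's own argument in this subsection: you factor $\pi^{\#}$ through $\on{Hom}_R(X^2,\bfk)$ via the compatible Tate resolutions, lift the $\tilde\alpha_i$ immediately from $X_i=X^2_i$ for $i\le 2$ together with $\partial_2 X_2\subseteq \overline{\frak m}X_1$, and invoke the minimality lemma to show each $S_j^*$ annihilates $Z_2(X^2)=B_2(X)=\partial_3(X_3)$ and hence is a cocycle mapping to $\tilde\beta_j$. You also correctly observe that the minimality hypothesis enters only through that lemma, which is exactly how the paper's proof is structured.
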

 
\begin{corollary}
If $c_j \in \frak m^3$ ($1 \leq j \leq r$) in Theorem~\ref{thm:quotientofExt}, then there is a surjective homomorphism of graded algebras 
\begin{align}
\on{Ext}^*_{R}(\bfk, \bfk)\twoheadrightarrow \bfk[\beta_1, \dots, \beta_r]\otimes_{\bfk}\wedge_{\bfk}[ \alpha_1, \dots, \alpha_n].
\end{align}
In particular, there is also a surjective homomorphism of graded algebras
\[\on{Ext}^*_{R}(\bfk, \bfk) \twoheadrightarrow \bfk[\beta_1, \dots, \beta_r].\]
\end{corollary}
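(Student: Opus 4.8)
The plan is to obtain both surjections directly from Theorem~\ref{thm:quotientofExt}, by checking that the extra hypothesis $c_j\in\frak m^3$ forces every correction term in the ideal $\cal J$ to vanish, so that the target algebra collapses to a tensor product of a polynomial algebra with an exterior algebra.

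First I would examine the scalars $\bar s_j^{i,i'}\in\bfk=S/\frak m$. Writing $c_j=\sum_{i,i'=1}^n s_j^{i,i'}t_it_{i'}$ as in Theorem~\ref{thm:quotientofExt}, the image in $\bfk$ of $s_j^{i,i'}$ is its constant term, and the degree-two homogeneous part of $c_j$ equals $\sum_{i,i'}\bar s_j^{i,i'}\,t_it_{i'}$. Since $c_j\in\frak m^2$, the hypothesis $c_j\in\frak m^3$ is equivalent to the vanishing of this degree-two part, which gives $\bar s_j^{i,i'}+\bar s_j^{i',i}=0$ in $\bfk$ for all $1\le i,i'\le n$ and all $j$; the diagonal case $i=i'$ reads $2\bar s_j^{i,i}=0$, whence $\bar s_j^{i,i}=0$ because $\bfk\supseteq\bb Q$. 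This is precisely the computation already used in Corollary~\ref{cor:deg3-ci-Ext algebra}.

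With these vanishings in hand, the generators of $\cal J$ in Theorem~\ref{thm:quotientofExt} reduce to $\alpha_i\alpha_{i'}+\alpha_{i'}\alpha_i$ for $1\le i,i'\le n$, and in particular $2\alpha_i^2=0$, i.e. $\alpha_i^2=0$. Because the $\beta_j$ are central polynomial generators of even degree, the quotient $\bfk[\beta_1,\dots,\beta_r]\langle\alpha_1,\dots,\alpha_n\rangle/\cal J$ is exactly $\bfk[\beta_1,\dots,\beta_r]\otimes_\bfk\wedge_\bfk[\alpha_1,\dots,\alpha_n]$, matching the structure found in Corollary~\ref{cor:deg3-ci-Ext algebra}; composing with the surjection of Theorem~\ref{thm:quotientofExt} then proves the first assertion. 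For the last statement I would post-compose with the graded algebra map that augments the exterior factor, sending each $\alpha_i\mapsto 0$ and fixing each $\beta_j$, which is visibly a surjection onto $\bfk[\beta_1,\dots,\beta_r]$.

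I do not expect a genuine obstacle: the corollary is essentially a specialization of Theorem~\ref{thm:quotientofExt}. The only point requiring care is the first step, namely identifying the vanishing of the degree-two part of each $c_j$ with the relations $\bar s_j^{i,i'}+\bar s_j^{i',i}=0$, together with the characteristic-zero hypothesis needed to pass from $2\alpha_i^2=0$ to $\alpha_i^2=0$.
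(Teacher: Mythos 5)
Your proposal is correct and follows essentially the same route as the paper: the paper treats this corollary as an immediate specialization of Theorem~\ref{thm:quotientofExt}, with the vanishing of the correction terms $\sum_j(\bar s_j^{i,i'}+\bar s_j^{i',i})\beta_j$ established exactly as in the proof of Corollary~\ref{cor:deg3-ci-Ext algebra}, and the final surjection onto $\bfk[\beta_1,\dots,\beta_r]$ obtained by killing the exterior generators. If anything, your derivation of $\bar s_j^{i,i'}+\bar s_j^{i',i}=0$ from the vanishing of the degree-two homogeneous part of $c_j$ is slightly more careful than the paper's one-line claim that $I\subseteq\frak m^3$ forces $s_j^{i,k}\in\frak m$, since yours is visibly independent of the (non-unique) choice of the matrix $(s_j^{i,i'})$.
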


A direct consequence of the results above is the following theorem, which is the one we mentioned at the beginning of this section.

\begin{theorem}\label{SliceofMaxH(R)ab}
Set $R=\bfk[t_1,\dots, t_n]/ I$ and $\{c_1, \dots, c_r\}\subseteq I$ a minimal set of homogeneous generators of degree at least $3$.  We consider the $R$-module $\bfk=R/ \langle \bar t_1, \dots,\bar t_n \rangle$.  Then $ \on{Spec}(\on{Ext}^*_{R}(\bfk, \bfk)^{ab})$ contains a closed subscheme $\bb A^r$.
\end{theorem}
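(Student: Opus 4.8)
The plan is to deduce the statement directly from the corollary to Theorem~\ref{thm:quotientofExt} together with the universal property of the operation $(-)^{ab}$ established in Section~\ref{SectionDef}. The substantive work has already been carried out in the construction of the compatible Tate resolutions and in the explicit description of $\on{Ext}^*_{R}(\bfk,\bfk)$, so the final step is largely formal.

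First I would observe that the degree hypothesis is exactly what is needed to reach a \emph{polynomial} quotient. Since $\{c_1,\dots,c_r\}$ is a minimal set of homogeneous generators of $I$ of degree at least $3$, we have $c_j\in\frak m^3$ for every $j$. Writing $c_j=\sum_{i,i'=1}^n s_j^{i,i'}t_it_{i'}$ with the $s_j^{i,i'}$ chosen homogeneous of degree $\deg(c_j)-2\geq 1$, we get $s_j^{i,i'}\in\frak m$, hence $\bar s_j^{i,i'}=0$ in $\bfk=S/\frak m$. Consequently the relations generating $\cal J$ in Theorem~\ref{thm:quotientofExt} collapse to the pure anticommutation relations $\alpha_i\alpha_{i'}+\alpha_{i'}\alpha_i=0$. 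This is precisely the content of the corollary following Theorem~\ref{thm:quotientofExt}: there is a surjection of graded algebras
\[\on{Ext}^*_{R}(\bfk,\bfk)\twoheadrightarrow \bfk[\beta_1,\dots,\beta_r]\otimes_{\bfk}\wedge_{\bfk}[\alpha_1,\dots,\alpha_n],\]
and composing with the further quotient killing all the $\alpha_i$ yields a surjection of graded algebras
\[q:\on{Ext}^*_{R}(\bfk,\bfk)\twoheadrightarrow \bfk[\beta_1,\dots,\beta_r].\]
I would stress that, with generators only of degree $\geq 2$, this last projection need not be well defined, since the defining relations of $\cal J$ would map to $\sum_j(\bar s_j^{i,i'}+\bar s_j^{i',i})\beta_j$, which is in general nonzero; the degree $\geq 3$ hypothesis is therefore essential here.

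Next I would pass to the abelianization. The target $\bfk[\beta_1,\dots,\beta_r]$ is a commutative $\bfk$-algebra, so by the universal property of $(-)^{ab}$ the homomorphism $q$ factors uniquely as
\[\on{Ext}^*_{R}(\bfk,\bfk)\xrightarrow{\ c\ }\on{Ext}^*_{R}(\bfk,\bfk)^{ab}\xrightarrow{\ \psi\ }\bfk[\beta_1,\dots,\beta_r],\]
where $c$ is the canonical map. Since $q=\psi\circ c$ is surjective, so is $\psi$. Applying $\on{Spec}$ then turns the surjection $\psi$ into a closed immersion
\[\bb A^r=\on{Spec}(\bfk[\beta_1,\dots,\beta_r])\cong \on{Spec}\big(\on{Ext}^*_{R}(\bfk,\bfk)^{ab}/\on{Ker}\psi\big)\hookrightarrow \on{Spec}(\on{Ext}^*_{R}(\bfk,\bfk)^{ab}),\]
exhibiting $\bb A^r$ as the closed subscheme cut out by the ideal $\on{Ker}\psi$, as required.

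The genuine mathematical obstacle lies upstream, in the corollary being invoked: one must know that the degree condition forces the Clifford-type relations to degenerate into an exterior-algebra factor, which in turn rests on the explicit Yoneda-product computations and on the lemma guaranteeing that the $S_j^*$ are honest cocycles, so that $\beta_1,\dots,\beta_r$ really lift to classes in $\on{Ext}^2$. Granting those results, the remaining argument is essentially formal, and the only point requiring care is that surjectivity of $q$ descends to surjectivity of $\psi$, which is immediate from the factorization $q=\psi\circ c$.
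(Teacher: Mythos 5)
Your proposal is correct and takes essentially the same route as the paper, which presents this theorem precisely as a direct consequence of the corollary to Theorem~\ref{thm:quotientofExt}: the degree-$\geq 3$ hypothesis forces $\bar s_j^{i,i'}=0$ in $\bfk$, collapsing the Clifford relations so that $\on{Ext}^*_{R}(\bfk,\bfk)$ surjects onto $\bfk[\beta_1,\dots,\beta_r]\otimes_{\bfk}\wedge_{\bfk}[\alpha_1,\dots,\alpha_n]$ and hence onto $\bfk[\beta_1,\dots,\beta_r]$, after which the universal property of $(-)^{ab}$ and $\on{Spec}$ yield the closed immersion $\bb A^r\hookrightarrow \on{Spec}(\on{Ext}^*_{R}(\bfk,\bfk)^{ab})$. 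Your explicit factorization $q=\psi\circ c$ and the remark that the quotient killing the $\alpha_i$ would fail for generators of degree $2$ are exactly the points the paper leaves implicit.
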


\section{The cohomological variety of the  Virasoro vertex operator algebra}\label{sec:Vir}
The Virasoro Lie algebra $\mathcal{L}$ is spanned by the set $\{L(n)\}_{n \in \mathbb{Z}} \cup \{\mathbf{c} \}$ such that $\mathbf{c} $ is central, and with the relations:
\[
[L(m), L(n)]=(m-n)L(m+n)+\frac{1}{12}(m^3-m)\delta_{m+n,0}\mathbf{c}.
\]
We write $\mathcal{L}_{\geq -1}=\bigoplus_{n \geq -1} \cc L(n) \oplus \cc \mathbf{c}$. Let $c$ be a complex number and let $\cc_c$ denote the one dimensional $\mathcal{L}_{\geq -1}$-module $\cc$ such that $\bigoplus_{n \geq -1} \cc L(n)$ acts as zero and $\bold{c}$ acts as $c$. The universal Virasoro vertex operator algebra of central charge $c$ is defined as the induced module
\begin{align*}
V_{Vir}(c,0)= U(\mathcal{L}) \otimes_{U(\mathcal{L}_{\geq -1})} \cc_c.
\end{align*}
The vacuum vector is $\mathbf{1}= 1 \otimes 1$, and any element of $V_{Vir}(c,0)$ is a linear combination of terms of the form
\[
L(-m_1) \dots L(-m_r) \mathbf{1},
\]
with $r \in \mathbb{N}$ and $m_1 \geq \dots \geq m_r \geq 2$. The degree of $L(-m_1) \dots L(-m_r) \mathbf{1}$ is defined as $m_1+\dots +m_r$ (cf. \cite[Section 6.1]{Lepowsky-Li} for details on the construction).

The sum $I_{Vir}(c,0)$ of the proper ideals of $V_{Vir}(c,0)$ is again a proper ideal and the quotient
\begin{align*}
L_{Vir}(c,0)= V_{Vir}(c,0)/I_{Vir}(c,0)
\end{align*}
is a simple vertex operator algebra. 

For any integers $p, q$ such that $\on{gcd}(p, q)=1$ and $q > p \geq 2$, we set $c_{p, q}= \displaystyle 1-6\frac{(p-q)^2}{pq}$. It is proved in \cite[Lemma 4.2]{Wang} that if $c \neq c_{p, q}$, then $L_{Vir}(c,0) =V_{Vir}(c,0)$ and thus
\begin{align*}
\begin{array}{ccc}
R(L_{Vir}(c, 0)) &  \stackrel{\cong}{\longrightarrow}  & \cc[x] \\[5pt]
\overline{L(-2)\mathbf{1}} & \longmapsto & x,
\end{array}
\end{align*}
where $\overline{{\color{white} x}}$ indicates the class in the quotient. Also in \cite[Lemma 4.2]{Wang}, it is shown that if $c=c_{p, q}$, then $I_{Vir}(c,0)$ is generated by a single vector $v_{p, q}$ of degree $(p-1)(q-1)$, and the coefficient of the term $L(-2)^{\frac{(p-1)(q-1)}{2}}\mathbf{1}$ in  $v_{p, q}$ is not zero. It follows that $\overline{v_{p, q}}$ is a non-zero multiple of $L(-2)^{\frac{(p-1)(q-1)}{2}}\mathbf{1}  \text{ mod }  C_2(V_{Vir}(c_{p, q},0))$. We thus have an isomorphism 
\begin{align*}
\begin{array}{ccc}
R(L_{Vir}(c_{p, q},0)) &  \stackrel{\cong}{\longrightarrow}  & \cc[x]/ \langle x^{\frac{(p-1)(q-1)}{2}} \rangle \\[5pt]
\overline{L(-2)\mathbf{1}} & \longmapsto & \overline{x}.
\end{array}
\end{align*}

It is proved in \cite{Wang} that the Zhu algebra of the simple Virasoro vertex operator algebra is $A(L_{Vir}(c,0)) \cong \cc[x]$ if $c \neq c_{p, q}$, and that there exists a polynomial $G_{p, q}(x) \in \cc[x]$ of degree $\frac{(p-1)(q-1)}{2}$ such that $A(L_{Vir}(c_{p, q},0)) \cong \cc[x]/\langle G_{p, q}(x) \rangle$. We filter the Zhu algebra by setting $\on{deg} x=2$, and the next result becomes a direct consequence of what we explained thus far.

\begin{proposition} [\cite{Wang}] \label{prop:Wang}
For the simple Virasoro vertex operator algebra $L_{Vir}(c,0)$ with $c \in \cc$, we have
\[
R(L_{Vir}(c,0)) \cong \on{gr} A(L_{Vir}(c,0))  \cong \left\{\begin{array}{l}
\cc[x] \text{ if } c \neq c_{p, q}, \\[10pt]
\cc[x]/ \langle x^{\frac{(p-1)(q-1)}{2}} \rangle \text{ if }  c = c_{p, q}.
\end{array}\right.
\]
\end{proposition}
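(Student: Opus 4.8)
The plan is to separate the two isomorphisms asserted in the statement and then identify them. The descriptions of $R(L_{Vir}(c,0))$ have already been obtained in the paragraphs preceding the statement: when $c \neq c_{p,q}$ one has $L_{Vir}(c,0) = V_{Vir}(c,0)$ and $R(L_{Vir}(c,0)) \cong \cc[x]$ via $\overline{L(-2)\mathbf 1} \mapsto x$, while when $c = c_{p,q}$ the maximal proper ideal $I_{Vir}(c,0)$ is generated by the single vector $v_{p,q}$ of degree $(p-1)(q-1)$ whose image modulo $C_2$ is a nonzero multiple of $L(-2)^{(p-1)(q-1)/2}\mathbf 1$, giving $R(L_{Vir}(c_{p,q},0)) \cong \cc[x]/\langle x^{(p-1)(q-1)/2}\rangle$. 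Since these have already been established, the only remaining task is to compute $\on{gr} A(L_{Vir}(c,0))$ for the two cases and observe that it coincides with $R$.

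First I would fix the filtration precisely: putting $\on{deg} x = 2$ on $A(L_{Vir}(c,0))$ is exactly the filtration by conformal weight, because the generator $x$ is the image of the conformal vector $\omega = L(-2)\mathbf 1 \in V_2$; with this filtration $\on{gr} A$ is a commutative graded $\cc$-algebra. In the generic case $c \neq c_{p,q}$, Wang's computation gives $A(L_{Vir}(c,0)) \cong \cc[x]$, which is already graded for $\on{deg} x = 2$, so $\on{gr} A \cong \cc[x]$ with no further work, matching $R$.

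In the modular case $c = c_{p,q}$, Wang gives $A \cong \cc[x]/\langle G_{p,q}(x)\rangle$ with $\on{deg} G_{p,q}(x) = (p-1)(q-1)/2$. The crux is the elementary fact that passing to the associated graded of a principal quotient of $\cc[x]$ replaces the defining polynomial by its leading term. Writing $d = (p-1)(q-1)/2$, since $G_{p,q}$ has nonzero leading coefficient in degree $d$, the images $\bar x^0, \dots, \bar x^{d-1}$ form a $\cc$-basis of $A$ while $\bar x^{d}$ already lies in the filtration piece $F_{2d-2}$; hence each graded quotient $F_{2k}/F_{2k-2}$ is one dimensional for $k < d$ and vanishes for $k \geq d$, so $\on{gr} A \cong \cc[x]/\langle x^{d}\rangle = \cc[x]/\langle x^{(p-1)(q-1)/2}\rangle$, again matching $R$. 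Combining the two cases yields $R(L_{Vir}(c,0)) \cong \on{gr} A(L_{Vir}(c,0))$ and the explicit forms in the statement.

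I do not expect any genuine obstacle here: the two points requiring attention are the identification of the $\on{deg} x = 2$ filtration with the conformal-weight filtration (which is immediate once one knows $x = \overline{\omega}$ and $\omega \in V_2$) and the leading-term computation for $\on{gr}$ of a quotient of $\cc[x]$ (which is standard). Both inputs that make these work — the degree of $G_{p,q}$ and the fact that the coefficient of $L(-2)^{(p-1)(q-1)/2}\mathbf 1$ in $v_{p,q}$ is nonzero — have already been supplied by Wang, so the proposition is indeed a direct consequence of the preceding discussion.
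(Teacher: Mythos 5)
Your proposal is correct and takes essentially the same route as the paper, which states the proposition as a direct consequence of the preceding discussion: Wang's identifications of $R(L_{Vir}(c,0))$ (via the singular vector $v_{p,q}$ and its nonzero $C_2$-image) and of $A(L_{Vir}(c,0))$, combined with the filtration $\on{deg} x=2$. The only difference is that you spell out the leading-term computation showing $\on{gr}\bigl(\cc[x]/\langle G_{p,q}(x)\rangle\bigr) \cong \cc[x]/\langle x^{(p-1)(q-1)/2}\rangle$, a standard step the paper leaves implicit.
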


Based on the above proposition, in order to determine the cohomological variety of $L_{Vir}(c,0)$, we need to compute the Yoneda algebras of the algebra $\cc[x]$ and of the algebra $\cc[x]/\langle x^r \rangle$ for $r \geq 1$.

We can verify that a free resolution of the trivial $\cc[x]$-module $\cc$ is given by the Koszul complex 
\[
0 \longrightarrow \cc[x]  \stackrel{ x}{\longrightarrow} \cc[x]  \longrightarrow \cc \longrightarrow 0.
\]
Furthermore, for any integer $r \geq 2$, a minimal resolution of the trivial $\cc[x]/\langle x^r \rangle$-module $\cc$ is given by
\[
\cdots \stackrel{ x}{\longrightarrow} \cc[x]/\langle x^r \rangle \stackrel{ x^{r-1}}{\longrightarrow} \cc[x]/\langle x^r \rangle  \stackrel{ x}{\longrightarrow} \cc[x]/\langle x^r \rangle  \longrightarrow \cc \longrightarrow 0.
\]
If $r=1$, then $\cc[x]/\langle x^r \rangle \cong \cc$ and so $0 \longrightarrow \cc[x]/\langle x^r \rangle  \stackrel{\on{id}}{\longrightarrow} \cc \longrightarrow 0$ is a free resolution of the $\cc[x]/\langle x^r \rangle$-module $\cc$. We can then compute the Yoneda algebra in those three cases and the next proposition is then well-known.

\begin{proposition}\label{prop:Ext_Vir} 
\begin{enumerate}
\item Set $R=\cc[x]/\langle x^r \rangle$ with $r \geq 1$, and consider the trivial module $\cc \cong R/\langle \overline{x} \rangle$. The Yoneda algebra is 
\[ 
\on{Ext}_{R}^{*}(\cc, \cc) \cong \left\{\begin{array}{l}
\cc  \text{ if }  r=1, \\[5pt]
\cc[\alpha, \beta]/\langle \alpha^2-\delta_{2,r}\beta \rangle \text{ if }  r \geq 2.
\end{array}\right.
\]
\item Set $R=\cc[x]$ and consider the trivial module $\cc \cong R/\langle x \rangle$. The Yoneda algebra is then 
\[ 
\on{Ext}_{R}^{*}(\cc, \cc) \cong \cc[\alpha]/\langle \alpha^2 \rangle.
\]
\end{enumerate}
\delete{
For any $c \in \cc$, the Yoneda algebra of the trivial $R(L_{Vir}(c,0))$-module $\cc$ is given by
\[ 
\on{Ext}_{R(L_{Vir}(c,0))}^{*}(\cc, \cc) \cong \left\{\begin{array}{l}
\cc[\alpha]/\langle \alpha^2 \rangle \text{ if } c \neq c_{p, q}, \\[5pt]
\cc  \text{ if }  c = c_{2, 3}, \\[5pt]
\cc[\alpha, \beta]/\langle \alpha^2-\delta_{2,\frac{(p-1)(q-1)}{2}}\beta \rangle \text{ if }  c = c_{p, q} \text{ with } (p, q) \neq (2, 3).
\end{array}\right.
\]}
\end{proposition}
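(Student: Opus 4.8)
The plan is to read everything off the explicit free resolutions displayed just before the statement, computing first the underlying graded vector space and then the multiplicative structure; the only genuine content is the single quadratic relation in the last case.

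First I would dispatch part (2), $R=\cc[x]$. Applying $\on{Hom}_R(-,\cc)$ to the Koszul resolution $0\to\cc[x]\xrightarrow{x}\cc[x]\to\cc\to 0$ and using $\cc=R/\langle x\rangle$, the dual of multiplication by $x$ is multiplication by $x$ on $\on{Hom}_R(\cc[x],\cc)\cong\cc$, which is zero. Hence the cochain complex has zero differentials, so $\on{Ext}^0_R(\cc,\cc)=\on{Ext}^1_R(\cc,\cc)=\cc$ and $\on{Ext}^n_R(\cc,\cc)=0$ for $n\geq 2$. Writing $\alpha$ for the degree-one class, $\alpha^2$ lies in $\on{Ext}^2_R(\cc,\cc)=0$, whence $\on{Ext}^*_R(\cc,\cc)\cong\cc[\alpha]/\langle\alpha^2\rangle$. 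The case $r=1$ of part (1) is immediate: then $R=\cc[x]/\langle x\rangle\cong\cc$, the module $\cc$ is free, so it has projective dimension $0$ and $\on{Ext}^*_R(\cc,\cc)=\on{Hom}_R(\cc,\cc)=\cc$ concentrated in degree $0$.

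For part (1) with $r\geq 2$ I would proceed in two stages. Applying $\on{Hom}_R(-,\cc)$ to the $2$-periodic minimal resolution, every differential is multiplication by $x$ or by $x^{r-1}$, both lying in $\overline{\frak m}$, so all dual differentials vanish and $\dim_\cc\on{Ext}^n_R(\cc,\cc)=1$ for every $n\geq 0$. To determine the algebra structure I would invoke Theorem~\ref{thm:ci-ext-alg}: the ring $R=\cc[x]/\langle x^r\rangle$ is a hypersurface, hence a complete intersection, with $S=\cc[x]$, a single generator $t_1=x$, and a single relation $c_1=x^r$. Since $r\geq 2$ we have $c_1\in\frak m^2$; writing $c_1=s_1^{1,1}t_1^2$ gives $s_1^{1,1}=x^{r-2}$, so $\bar s_1^{1,1}=\delta_{2,r}$ in $S/\frak m=\cc$. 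Theorem~\ref{thm:ci-ext-alg} then yields $\on{Ext}^*_R(\cc,\cc)\cong\cc[\beta]\langle\alpha\rangle/\cal I$ with $\deg\alpha=1$, $\deg\beta=2$, and $\cal I$ generated by $2\alpha^2+2\bar s_1^{1,1}\beta=2(\alpha^2+\delta_{2,r}\beta)$; after rescaling $\beta$ by a nonzero scalar this is exactly $\cc[\alpha,\beta]/\langle\alpha^2-\delta_{2,r}\beta\rangle$. The fact that each graded piece is one-dimensional confirms that there are no further relations beyond those already recorded.

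The only substantive point, and the place to be careful, is this last quadratic relation: whether $\alpha^2$ vanishes or equals a nonzero multiple of $\beta$. This is governed entirely by whether $c_1=x^r$ lies in $\frak m^3$, that is $r\geq 3$, in which case $\bar s_1^{1,1}=0$, the relation reduces to $\alpha^2=0$, and $\beta$ is a genuine polynomial generator, or whether $c_1\in\frak m^2\setminus\frak m^3$, the case $r=2$, in which case $\alpha^2=\beta$ up to scalar so that $\beta$ becomes redundant and $\on{Ext}^*_R(\cc,\cc)\cong\cc[\alpha]$ is a polynomial algebra on one degree-one generator. The sign and normalization ambiguity is immaterial, since $\beta$ is determined only up to a nonzero scalar, which is precisely why the relation $\alpha^2-\delta_{2,r}\beta$ is correct as stated. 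As an alternative to citing Theorem~\ref{thm:ci-ext-alg}, one could lift the degree-one cocycle $\alpha$ to a chain map of the periodic resolution and compute $\alpha\circ\alpha$ directly; the period pattern $x,x^{r-1},x,\dots$ makes the lift square to zero when $r\geq 3$ and to a generator of $\on{Ext}^2$ when $r=2$, recovering the same conclusion.
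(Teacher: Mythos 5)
Your proposal is correct and matches the paper's intended argument: the paper merely displays these minimal resolutions and records the computation as well-known, and your dualization (all induced differentials vanish since the matrices of the resolution lie in the maximal ideal) gives exactly the one-dimensional graded pieces, with part (2) and the $r=1$ case handled as you do. Your appeal to Theorem~\ref{thm:ci-ext-alg} with $S=\cc[x]$, $t_1=x$, $c_1=x^r=x^{r-2}\cdot x\cdot x$, hence $\bar s_1^{1,1}=\delta_{2,r}$ in $S/\frak m=\cc$, correctly settles the only delicate point — the relation $\alpha^2=-\bar s_1^{1,1}\beta$, with the sign absorbed by rescaling $\beta$ — yielding the dichotomy between $r=2$ (where $\on{Ext}^*\cong\cc[\alpha]$, commutative but not graded-commutative) and $r\geq 3$ (where $\on{Ext}^*\cong\cc[\beta]\otimes_{\cc}\wedge_{\cc}[\alpha]$), exactly as stated.
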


The Yoneda algebra structures in the proposition are identically the same for $r >2$. It seems that $r$ is not reflected in the Yoneda algebra. However, there is an $A^\infty$-structure which captures $r$ (see \cite{Keller}).

We notice that the Yoneda algebra is always graded commutative. Thus the next corollary is a direct consequence of Definition~\ref{def:cohom_var} and Proposition~\ref{prop:Ext_Vir}.

\begin{corollary}
The cohomological variety of the simple Virasoro vertex operator algebra is given by
\[ 
X_{L_{Vir}(c,0), 0}^{!} \cong \left\{\begin{array}{cl}
\cc & \text{if }  c = c_{p, q} \text{ with } (p, q) \neq (2, 3), \\[5pt]
\{0\} &  \text{otherwise}.
\end{array}\right.
\]
\end{corollary}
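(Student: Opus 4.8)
The plan is to combine Proposition~\ref{prop:Wang}, which identifies $R(L_{Vir}(c,0))$, with Proposition~\ref{prop:Ext_Vir}, which records the Yoneda algebra in each case, and then to read off the maximal spectrum of the maximal commutative quotient. The first observation I would make is that in every case listed in Proposition~\ref{prop:Ext_Vir} the Yoneda algebra $H^*_0(R(L_{Vir}(c,0)))$ is already commutative as an ordinary (ungraded) $\cc$-algebra: when it equals $\cc$ or $\cc[\alpha]/\langle \alpha^2\rangle$ there is nothing to check, and when it equals $\cc[\alpha,\beta]/\langle \alpha^2-\delta_{2,r}\beta\rangle$ the generator $\beta$ has even degree and hence commutes with $\alpha$, while $\alpha$ trivially commutes with itself. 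Therefore $H^*_0(R(L_{Vir}(c,0)))^{ab}=H^*_0(R(L_{Vir}(c,0)))$, and by Definition~\ref{def:cohom_var} the cohomological variety $X^!_{L_{Vir}(c,0),0}$ is just the maximal ideal spectrum of the Yoneda algebra itself.

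Next I would run the case analysis dictated by Proposition~\ref{prop:Wang}. If $c\neq c_{p,q}$, then $R(L_{Vir}(c,0))\cong\cc[x]$ and Proposition~\ref{prop:Ext_Vir}(2) gives $\on{Ext}^*\cong\cc[\alpha]/\langle\alpha^2\rangle$; since $\alpha$ is nilpotent this is a local Artinian ring whose unique maximal ideal is $\langle\alpha\rangle$, so $\on{Max}$ is a single point and $X^!_{L_{Vir}(c,0),0}=\{0\}$. If $c=c_{p,q}$, then $R(L_{Vir}(c,0))\cong\cc[x]/\langle x^r\rangle$ with $r=\frac{(p-1)(q-1)}{2}$, and I would subdivide according to the value of $r$ as in Proposition~\ref{prop:Ext_Vir}(1).

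The three subcases for $c=c_{p,q}$ are as follows. When $r=1$ the Yoneda algebra is $\cc$, whose spectrum is the single point $\{0\}$; and $r=1$ forces $(p-1)(q-1)=2$, i.e. $(p,q)=(2,3)$, which is precisely the excluded case. When $r=2$ the relation $\alpha^2=\beta$ lets me eliminate $\beta$, yielding $\on{Ext}^*\cong\cc[\alpha]$, a polynomial ring in one variable, whose maximal spectrum is $\bb A^1\cong\cc$. When $r\geq 3$ we have $\delta_{2,r}=0$, so $\on{Ext}^*\cong\cc[\alpha,\beta]/\langle\alpha^2\rangle$; here $\alpha$ is nilpotent, the nilradical is $\langle\alpha\rangle$, and the reduced ring is $\cc[\beta]$, so again $\on{Max}\cong\bb A^1\cong\cc$. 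Thus every case with $r\geq 2$, i.e. every $(p,q)\neq(2,3)$, produces the variety $\cc$, whereas $(p,q)=(2,3)$ together with the case $c\neq c_{p,q}$ produces $\{0\}$, which is exactly the claimed formula.

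The computation is routine once Propositions~\ref{prop:Wang} and~\ref{prop:Ext_Vir} are in hand; there is no substantial obstacle. The only points requiring mild care are the bookkeeping that singles out $(p,q)=(2,3)$ as the unique pair giving $r=1$, and the observation that $\on{Max}$ is insensitive to the nilpotent generator $\alpha$ when $r\geq 3$, so that it is the presence of the polynomial generator $\beta$ (equivalently of $\alpha$ itself when $r=2$) that forces the cohomological variety to be one-dimensional.
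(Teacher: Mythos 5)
Your proposal is correct and follows essentially the same route as the paper, which likewise deduces the corollary directly from Proposition~\ref{prop:Wang}, Proposition~\ref{prop:Ext_Vir}, and Definition~\ref{def:cohom_var} after observing that the Yoneda algebra is commutative (so abelianization is the identity). Your explicit case analysis — including the bookkeeping that $r=1$ forces $(p,q)=(2,3)$ and that $\on{Max}$ ignores the nilpotent $\alpha$ for $r\geq 3$ while $\alpha^2=\beta$ yields $\cc[\alpha]$ for $r=2$ — is exactly the computation the paper leaves implicit.
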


\begin{remark}
As mentioned in Remarks~\ref{rem:3.4}, it is straightforward to verify that for any $c \in \cc$, we have $R(V_{Vir}(c,0))=\cc[x]$. As $R(V_{Vir}(c,0))_0=\cc$, it is local and has a unique maximal ideal $\{0\}$. Therefore $H_{\{0\}}^{*}(R(V_{Vir}(c,0)))=\cc[x]/\langle x^2 \rangle$, and so $X_{V_{Vir}(c,0), 0}^{!}=\{0\}$.
\end{remark}

\section{The cohomological varieties of rational affine VOAs}\label{LowerboundforLiealgebras}
We give a brief summary of the construction of the simple affine vertex operator algebras following  \cite{Frenkel-Zhu}, \cite{Kac}, and \cite{Lepowsky-Li}. 

Let $\mathfrak{g}$ be a finite dimensional simple Lie algebra with non-degenerate symmetric invariant bilinear form $\langle \ , \ \rangle_{\mathfrak{g}}$, and let $\mathfrak{h}$ be a Cartan subalgebra of $\mathfrak{g}$. We further assume that $\langle \ , \ \rangle_{\mathfrak{g}}$ is normalised so that the non-degenerate symmetric bilinear form $( \ , \ )_{\mathfrak{h}^*}$ on $\mathfrak{h}^*$ verifies $( \alpha, \alpha )_{\mathfrak{h}^*}=2$ for long roots of $\mathfrak{g}$. The affinization of $\mathfrak{g}$ is
\begin{align*}
\hat{\mathfrak{g}}=\mathfrak{g} \otimes \cc[t, t^{-1}]\oplus \cc \bold{k}
\end{align*}
equipped with the relations
\begin{align*}
[a(m),b(n)]=[a,b](m+n)+m\langle a, b \rangle_{\mathfrak{g}} \delta_{m+n,0}\bold{k}
\end{align*}
for $a,b \in \mathfrak{g}$ and $a(m)=a \otimes t^m$. Furthermore $\bold{k}$ is central in $\hat{\mathfrak{g}}$. We write $\hat{\mathfrak{g}}_{\geq 0}=\mathfrak{g} \otimes t\cc[t] \oplus \mathfrak{g} \oplus \cc \bold{k}$.

Set $k \in \mathbb{C}$ and define $\cc_k$ the $\hat{\mathfrak{g}}_{\geq 0}$-module such that $\mathfrak{g} \otimes t\cc[t] \oplus \mathfrak{g}$ acts as zero and $\bold{k}$ acts as $k$. The universal affine vertex operator algebra of level $k$ is defined as the induced module
\begin{align*}
V_{\hat{\mathfrak{g}}}(k,0)= U(\hat{\mathfrak{g}}) \otimes_{U(\hat{\mathfrak{g}}_{\geq 0})} \cc_k.
\end{align*}

The sum $I_{\hat{\mathfrak{g}}}(k,0)$ of the proper ideals of $V_{\hat{\mathfrak{g}}}(k,0)$ is again a proper ideal and the quotient
\begin{align*}
L_{\hat{\mathfrak{g}}}(k,0)= V_{\hat{\mathfrak{g}}}(k,0)/I_{\hat{\mathfrak{g}}}(k,0)
\end{align*}
is a simple vertex operator algebra. Furthermore, if $k \in \mathbb{N}$, as a $\hat{\mathfrak{g}}$-submodule of $V_{\hat{\mathfrak{g}}}(k,0)$, we have
\begin{align}\label{eq:6.1}
I_{\hat{\mathfrak{g}}}(k,0)= U(\hat{\mathfrak{g}})e_\theta(-1)^{k+1}\mathbf{1},
\end{align}
and
\[ X(m)e_\theta(-1)^{k+1}\mathbf{1}=0
\]
for all $m \geq 1$ and $X \in \mathfrak{g}$, where $e_\theta$ is the root vector of $\mathfrak{g}$ corresponding to the highest root $\theta$.

By definition we have $R(L_{\hat{\mathfrak{g}}}(k,0))=L_{\hat{\mathfrak{g}}}(k,0)/C_2(L_{\hat{\mathfrak{g}}}(k,0))$, which is isomorphic to the quotient $V_{\hat{\mathfrak{g}}}(k,0)/(C_2(V_{\hat{\mathfrak{g}}}(k,0)) + I_{\hat{\mathfrak{g}}}(k,0))\cong R(V_{\hat{\mathfrak{g}}}(k,0))/\overline{I_{\hat{\mathfrak{g}}}(k,0)}$, where $\overline{I_{\hat{\mathfrak{g}}}(k,0)}$ is the image of $I_{\hat{\mathfrak{g}}}(k,0)$ in $R(V_{\hat{\mathfrak{g}}}(k,0))$. It has been discussed in Remarks~\ref{rem:3.4} that for any $k \in \cc$, we have 
\[ R(V_{\hat{\mathfrak{g}}}(k,0)) \cong \on{Sym} (\mathfrak{g}(-1)\overline{\mathbf{1}})\]
with $\mathfrak{g}(-1)=\mathfrak{g} \otimes t^{-1}$.

By identifying $ \mathfrak g (-1)$ with $\mathfrak g$ in degree $1$, we have $R(V_{\hat{\mathfrak{g}}}(k,0)) \cong \on{Sym} (\mathfrak{g})=\cc[\mathfrak g^*]$ as Poisson algebras. Here $\cc[\mathfrak g^*]$ is the coordinate function algebra of the Poisson variety with Poisson structure determined by the Lie bracket $\{x, y\}=[x, y]$ for $x, y \in \mathfrak g$.  Furthermore, the action of $ U(\mathfrak g)$ on $ \overline{e_\theta^{k+1}(-1)\mathbf 1} \in \on{Sym}^{k+1}(\mathfrak g)$ is given by the adjoint $\mathfrak g$-module structure on $\mathfrak g$. We will write $\cdot$ for this action. 

Therefore the $C_2$-algebra of the simple affine vertex operator algebra $ L_{\hat{\mathfrak{g}}}(k,0)$ can be described as
\[
R(L_{\hat{\mathfrak{g}}}(k,0)) \cong \on{Sym} (\mathfrak{g}(-1)\overline{\mathbf{1}})/\overline{I_{\hat{\mathfrak{g}}}(k,0)}.
\]
By \eqref{eq:6.1}, 
\begin{align*}
\overline{I_{\hat{\mathfrak{g}}}(k,0)}=(\on{Sym}(\mathfrak{g}(-1)) \otimes  U(\mathfrak{g})) \overline{e_\theta(-1)^{k+1}\mathbf{1}}=\on{Sym}(\mathfrak g)(U(\mathfrak g)\cdot e_{\theta}^{k+1})\subseteq \on{Sym}(\mathfrak g).
\end{align*}

Note that $\on{Sym}^{k+1}(\mathfrak g)$ is a finite dimensional $\mathfrak g$-module and $ e_\theta^{k+1}$ is a highest weight  vector with highest weight $ (k+1)\theta$. Then $ U(\mathfrak g)\cdot e_\theta^{k+1}=L((k+1)\theta)$ is the irreducible $\mathfrak g$-module.  In particular, since $ U(\mathfrak g)\cdot e_{\theta}^{k+1}$ is a $\mathfrak g$-module via the adjoint action, then $\langle U(\mathfrak g)\cdot e_\theta^{k+1}\rangle=\on{Sym}(\mathfrak g) (U(\mathfrak g)\cdot e_\theta^{k+1})$ is a Poisson ideal of $ \on{Sym}(\mathfrak g)$. Thus we have the following lemma.
\begin{lemma}  \label{lemma:affine_min_gen} For any integer $ k\geq 0 $ and finite dimensional simple Lie algebra $\mathfrak g$,  we have a graded Poisson algebra isomorphism $R(L_{\hat{\mathfrak{g}}}(k,0))\cong \on{Sym}(\mathfrak g)/\langle U(\mathfrak g)\cdot e_\theta^{k+1}\rangle$. 
Any  basis of $ U(\mathfrak g)\cdot e_\theta^{k+1} $ is a minimal set of  generators of the ideal  $\overline{I_{\hat{\mathfrak{g}}}(k,0)}$  of $R(V_{\hat{\mathfrak{g}}}(k,0))$ and they are homogeneous of degree $k+1$.  
\end{lemma}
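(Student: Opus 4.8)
The plan is to separate the two assertions, since the isomorphism is essentially assembled in the paragraphs preceding the statement and only the minimality requires work. First I would record the isomorphism: combining the identification $R(L_{\hat{\mathfrak g}}(k,0)) \cong \on{Sym}(\mathfrak g(-1)\overline{\mathbf 1})/\overline{I_{\hat{\mathfrak g}}(k,0)}$ with the computation $\overline{I_{\hat{\mathfrak g}}(k,0)} = \on{Sym}(\mathfrak g)\,(U(\mathfrak g)\cdot e_\theta^{k+1}) = \langle U(\mathfrak g)\cdot e_\theta^{k+1}\rangle$ yields the stated isomorphism of commutative algebras, and since this ideal was shown to be a Poisson ideal, the quotient inherits the bracket, so the map is an isomorphism of graded Poisson algebras. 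The homogeneity claim is immediate: the adjoint action of $U(\mathfrak g)$ preserves the symmetric grading and $e_\theta^{k+1} \in \on{Sym}^{k+1}(\mathfrak g)$, so every element of $W := U(\mathfrak g)\cdot e_\theta^{k+1} = L((k+1)\theta)$ is homogeneous of degree $k+1$.

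The substantive point is the minimality, and here I would argue purely through the grading. Write $S = \on{Sym}(\mathfrak g)$ with $\mathfrak g$ placed in degree $1$, and let $\{v_1, \dots, v_r\}$ be any $\cc$-basis of $W \subseteq S_{k+1}$; these generate $\overline{I_{\hat{\mathfrak g}}(k,0)} = S\cdot W$ by construction. To verify minimality in the sense of Section~\ref{sec:4.4}, suppose toward a contradiction that some generator, say $v_1$, lies in $\sum_{j\geq 2} S v_j$, and write $v_1 = \sum_{j\geq 2} f_j v_j$ with $f_j \in S$. Comparing homogeneous components of degree $k+1$ on both sides, and using that each $v_j$ is homogeneous of degree $k+1$ while $S$ is non-negatively graded with $S_0 = \cc$, the only contribution of $f_j v_j$ in degree $k+1$ comes from the constant term $(f_j)_0 \in \cc$ of $f_j$. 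Hence $v_1 = \sum_{j\geq 2} (f_j)_0 v_j$ is a $\cc$-linear combination of $v_2, \dots, v_r$, contradicting the linear independence of the basis.

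I expect no serious obstacle, as the whole argument rests on $W$ being a single graded piece: this forces $\overline{I_{\hat{\mathfrak g}}(k,0)}$ to be generated entirely in degree $k+1$ with trivial intersection with lower degrees, so that every algebraic relation among the generators collapses to a $\cc$-linear one. Equivalently one could compute $\overline{I_{\hat{\mathfrak g}}(k,0)}/\bigl(S_+\,\overline{I_{\hat{\mathfrak g}}(k,0)}\bigr) \cong W$ and read off that the number of minimal generators equals $\dim_\cc W = r$, but I would keep the direct degree comparison above since it is self-contained. The only small care needed is to confirm that $U(\mathfrak g)\cdot e_\theta^{k+1} = L((k+1)\theta)$ is the irreducible module of dimension $r$, which holds because $e_\theta^{k+1}$ is a highest weight vector of weight $(k+1)\theta$ (each positive root vector $e_\alpha$ annihilates it, as $[e_\alpha, e_\theta]=0$ for $\alpha \in \Phi^+$ since $\theta$ is the highest root) sitting inside the completely reducible module $\on{Sym}^{k+1}(\mathfrak g)$; this is already recorded before the statement.
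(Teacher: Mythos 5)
Your proof is correct and takes essentially the same route as the paper: there, too, the isomorphism is assembled from the preceding identifications $R(L_{\hat{\mathfrak g}}(k,0))\cong R(V_{\hat{\mathfrak g}}(k,0))/\overline{I_{\hat{\mathfrak g}}(k,0)}$ and $\overline{I_{\hat{\mathfrak g}}(k,0)}=\on{Sym}(\mathfrak g)\,(U(\mathfrak g)\cdot e_\theta^{k+1})$, with $U(\mathfrak g)\cdot e_\theta^{k+1}=L((k+1)\theta)$ recognized via the highest weight vector $e_\theta^{k+1}$ and the ideal seen to be Poisson because $L((k+1)\theta)$ is an ad-submodule. Your degree-comparison argument for minimality just makes explicit the remark the paper records with its definition of minimality in Section~\ref{ComputeH*(Rtilde)}, that $\cc$-linearly independent homogeneous elements of a single fixed degree always form a minimal generating set of the ideal they generate.
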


\delete{By Lemma \ref{l6.1},  $U(\mathfrak{g}) \overline{e_\theta(-1)^{k+1}\mathbf{1}} $ is a highest weight $\mathfrak{g}$-module with highest weight $(k+1)\theta$. Notice that for a positive root $\alpha$ of $\mathfrak{g}$, there exists $N>0$ such that $f_{\alpha}^N e_\theta(-1)^{k+1}\mathbf{1}=0$. Hence $U(\mathfrak{g}) \overline{e_\theta(-1)^{k+1}\mathbf{1}} $ is an integrable $\mathfrak{g}$-module (\cite[\S 10.1]{Kac}). Then, using \cite[Theorem 10.7.b]{Kac} and the fact that $U(\mathfrak{g}) \overline{e_\theta(-1)^{k+1}\mathbf{1}} $ is indecomposable, we see that $U(\mathfrak{g}) \overline{e_\theta(-1)^{k+1}\mathbf{1}} $ is a finite-dimensional irreducible $\mathfrak{g}$-module. 
}
Hence the minimal number of generators of the ideal $\overline{I_{\hat{\mathfrak{g}}}(k,0)}$ inside the polynomial ring $\on{Sym}(\mathfrak{g}(-1))$ is equal to
\begin{equation}\label{eq:Nk-formula}
N_k=\dim_\cc (U(\mathfrak{g}) \overline{e_\theta(-1)^{k+1}\mathbf{1} })=\prod_{\alpha \in \Phi^+}((k+1)\frac{( \theta, \alpha )}{( \rho, \alpha )}+1),
\end{equation}
 by Weyl's character formula. Here $ (\cdot, \cdot)$ is the (symmetric) inner product defining the root system, $ \Phi^+$ is the set of positive roots, and $\rho$ the half-sum of the positive roots. One notes that $ N_k$ is a polynomial of $k$ with nonnegative real coefficients.  In particular, if $\mathfrak{g}=\on{sl}_2({\mathbb C})$, then  $\theta=2\rho$ and $N_k=2k+3$. When the integer level satisfies $k \geq 2$, we can apply Theorem~\ref{SliceofMaxH(R)ab} and obtain a lower bound on the dimension of $X_{L_{\hat{\mathfrak{g}}}(k,0),0}^!$. 
 
\begin{theorem}\label{dimX_V!LieAlgebra}
Let $\mathfrak{g}$ be a finite dimensional simple Lie algebra. If $k$ is a positive integer larger or equal to $2$, then there exists closed embedding of algebraic varieties. 
\begin{align*}
\cc^{N_k} \myhookrightarrow X_{L_{\hat{\mathfrak{g}}}(k,0), 0}^{!}.
\end{align*}

\end{theorem}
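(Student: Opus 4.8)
The plan is to obtain this as a direct application of Theorem~\ref{SliceofMaxH(R)ab} to the commutative algebra $R=R(L_{\hat{\mathfrak{g}}}(k,0))$, once it has been put into the presentation required by that theorem. The entire difficulty of the cohomological computation has already been absorbed into Section~\ref{Sectiondimcomology}; what remains is to match the representation-theoretic description of $R(L_{\hat{\mathfrak{g}}}(k,0))$ to those hypotheses and to check a single numerical inequality.

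First I would fix the presentation. Writing $S=\on{Sym}(\mathfrak{g})=\cc[t_1,\dots,t_n]$ with $n=\dim_\cc\mathfrak{g}$ and $\{t_1,\dots,t_n\}$ a basis of $\mathfrak{g}$ placed in degree $1$, Lemma~\ref{lemma:affine_min_gen} gives the graded isomorphism $R(L_{\hat{\mathfrak{g}}}(k,0))\cong S/\langle U(\mathfrak{g})\cdot e_\theta^{k+1}\rangle$, and asserts moreover that any $\cc$-basis $\{c_1,\dots,c_{N_k}\}$ of the irreducible submodule $U(\mathfrak{g})\cdot e_\theta^{k+1}=L((k+1)\theta)\subseteq\on{Sym}^{k+1}(\mathfrak{g})$ is a \emph{minimal} set of generators of the defining ideal $I=\overline{I_{\hat{\mathfrak{g}}}(k,0)}$, with each $c_j$ homogeneous of degree $k+1$. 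Here the number of generators is $N_k=\dim_\cc L((k+1)\theta)=\prod_{\alpha\in\Phi^+}\bigl((k+1)\tfrac{(\theta,\alpha)}{(\rho,\alpha)}+1\bigr)$ by Weyl's dimension formula. The trivial module to feed into the machinery is $\cc=R/\langle\bar t_1,\dots,\bar t_n\rangle=\cc_0$, attached to the vertex point, since $R(L_{\hat{\mathfrak{g}}}(k,0))_0=\cc$ for this CFT-type vertex operator algebra, so that $\on{Ext}^*_R(\cc,\cc)=H_0^*(R(L_{\hat{\mathfrak{g}}}(k,0)))$.

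The second, decisive, step is the degree check: because $k\geq 2$ we have $\deg c_j=k+1\geq 3$. Thus the hypotheses of Theorem~\ref{SliceofMaxH(R)ab} hold with $r=N_k$, and I would invoke it (equivalently, the corollary to Theorem~\ref{thm:quotientofExt}, which under $I\subseteq\frak m^3$ produces the surjection of graded algebras $\on{Ext}^*_R(\cc,\cc)\twoheadrightarrow\cc[\beta_1,\dots,\beta_{N_k}]$). Since the target is commutative, this surjection factors through the abelianization by the universal property of $(\cdot)^{ab}$, giving a surjection $\on{Ext}^*_R(\cc,\cc)^{ab}\twoheadrightarrow\cc[\beta_1,\dots,\beta_{N_k}]$. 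Passing to maximal spectra then turns this into a closed embedding $\cc^{N_k}=\on{Max}(\cc[\beta_1,\dots,\beta_{N_k}])\myhookrightarrow\on{Max}(\on{Ext}^*_R(\cc,\cc)^{ab})=X^{!}_{L_{\hat{\mathfrak{g}}}(k,0),0}$, the image being the closed subvariety cut out by the kernel of the surjection; even though $\on{Ext}^*_R(\cc,\cc)^{ab}$ is not known to be finitely generated, a surjection onto a finitely generated algebra still determines a genuine closed subvariety, so this point causes no trouble.

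The main obstacle is therefore not in the proof of this theorem at all, but in the inputs it quotes: the explicit Tate/complete-intersection resolution and the surjectivity argument underlying Theorem~\ref{thm:quotientofExt}, together with the structural result of Lemma~\ref{lemma:affine_min_gen}. The one genuinely essential feature to highlight is that all $N_k$ generators lie in a \emph{single} irreducible $\mathfrak{g}$-module $L((k+1)\theta)$ and share the \emph{same} degree $k+1$: this is exactly what makes minimality and the degree bound $k+1\geq 3$ hold simultaneously, which is what the application of Theorem~\ref{SliceofMaxH(R)ab} demands. The restriction $k\geq 2$ (rather than $k\geq 1$) is precisely the price of the degree-$\geq 3$ hypothesis, since for $k=1$ the generators have degree $2$ and only the weaker Theorem~\ref{thm:quotientofExt} with nontrivial Clifford relations would be available.
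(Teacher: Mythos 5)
Your proposal is correct and follows essentially the same route as the paper: invoke Lemma~\ref{lemma:affine_min_gen} to present $R(L_{\hat{\mathfrak{g}}}(k,0))$ as $\on{Sym}(\mathfrak{g})/\langle U(\mathfrak{g})\cdot e_\theta^{k+1}\rangle$ with a minimal generating set of $N_k$ homogeneous elements of degree $k+1\geq 3$, then apply Theorem~\ref{SliceofMaxH(R)ab}. Your added remarks --- that the surjection onto $\cc[\beta_1,\dots,\beta_{N_k}]$ factors through the abelianization by its universal property, and that a surjection onto a finitely generated algebra yields a genuine closed subvariety even without knowing $\on{Ext}^*_R(\cc,\cc)^{ab}$ is finitely generated --- are details the paper leaves implicit, and they are handled correctly.
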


\begin{remark}
It was briefly mentioned in Remarks~\ref{rem:3.4}, but one can verify that for any $k \in \cc$, we have $R(V_{\hat{\mathfrak{g}}}(k,0))=\on{Sym}(\mathfrak{g})$. As $\on{Max}(R(V_{\hat{\mathfrak{g}}}(k,0))_0)=\{0\}$, we get $H_{\{0\}}^{*}(R(V_{\hat{\mathfrak{g}}}(k,0)))=
\wedge(\mathfrak{g})$, and therefore $X_{V_{\hat{\mathfrak{g}}}(k,0), 0}^{!}=\{0\}$.
\end{remark}

If the number $N_k$ in the above theorem is strictly bigger than the dimension of $\mathfrak{g}$, then this presentation of the algebra $R(L_{\hat{\mathfrak{g}}}(k,0))$ has more relations than variables, and so the ideal of relations cannot be generated by a regular sequence. We will use this reasoning to show the following proposition:

\begin{proposition}\label{R(V)completeintersection} 
Let $\mathfrak{g}$ be a finite dimensional simple Lie algebra and set $k \in \mathbb{N}$. Then we have the following equivalence:
\begin{align*}
\widetilde{X}_{L_{\hat{\mathfrak{g}}}(k,0)} \subset \mathfrak{g}^* \text{ is a complete intersection if and only if } k=0.
\end{align*}
\end{proposition}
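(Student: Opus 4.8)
The plan is to reduce the complete intersection property to a numerical comparison between the minimal number of generators of the defining ideal and the dimension $n=\dim_\cc\mathfrak g$ of the ambient space $\mathfrak g^*$, and then to read off the answer from the Weyl dimension formula for $N_k$. By Lemma~\ref{lemma:affine_min_gen} we have the graded presentation $R(L_{\hat{\mathfrak{g}}}(k,0))\cong \on{Sym}(\mathfrak g)/\overline{I_{\hat{\mathfrak{g}}}(k,0)}$ inside $\on{Sym}(\mathfrak g)=\cc[\mathfrak g^*]$, where $\overline{I_{\hat{\mathfrak{g}}}(k,0)}=\langle U(\mathfrak g)\cdot e_\theta^{k+1}\rangle$ and any vector space basis of the irreducible module $U(\mathfrak g)\cdot e_\theta^{k+1}=L((k+1)\theta)$ is a minimal set of generators; since all minimal generating sets of a graded ideal have the same cardinality, the minimal number of generators of the ideal is exactly $N_k=\dim_\cc L((k+1)\theta)$. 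By the definition in Section~\ref{sec:4.4}, $\widetilde{X}_{L_{\hat{\mathfrak{g}}}(k,0)}\subset\mathfrak g^*$ is a complete intersection precisely when $\overline{I_{\hat{\mathfrak{g}}}(k,0)}$ is generated by a regular sequence. As a regular sequence in the polynomial ring $\cc[t_1,\dots,t_n]$ has length at most its Krull dimension $n$, such an ideal can be generated by at most $n$ elements; hence a complete intersection forces $N_k\le n$. Consequently, to prove that $k\ge 1$ excludes the complete intersection property it suffices to establish the strict inequality $N_k>n$, while for $k=0$ I will instead exhibit an explicit regular sequence.

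For $k=0$ I would argue directly. Here $e_\theta^{1}=e_\theta$ generates the adjoint module $U(\mathfrak g)\cdot e_\theta=L(\theta)=\mathfrak g$, so $\overline{I_{\hat{\mathfrak{g}}}(0,0)}$ is the augmentation ideal $\on{Sym}(\mathfrak g)_{+}$, which is cut out by a vector space basis $t_1,\dots,t_n$ of $\mathfrak g$. This basis is a regular sequence — it consists of the coordinate functions on $\mathbb A^n=\mathfrak g^*$ — so $\widetilde{X}_{L_{\hat{\mathfrak{g}}}(0,0)}=\{0\}$ is a complete intersection, consistent with $N_0=\dim_\cc\mathfrak g=n$.

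For $k\ge 1$ the crux is the strict inequality $N_k>n$. I would use the Weyl dimension formula, which yields $N_k=\prod_{\alpha\in\Phi^+}\left((k+1)\frac{(\theta,\alpha)}{(\rho,\alpha)}+1\right)$ and, in particular, $N_0=\dim_\cc L(\theta)=n$. Comparing these two products factor by factor, each factor of $N_k$ dominates the corresponding factor of $N_0$, because $(\theta,\alpha)\ge 0$ and $(\rho,\alpha)>0$ for every positive root $\alpha$; moreover the factor indexed by $\alpha=\theta$ is strictly larger as soon as $k\ge 1$, since $(\theta,\theta)=2>0$ gives $(k+1)\frac{2}{(\rho,\theta)}+1>\frac{2}{(\rho,\theta)}+1$. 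Hence $N_k>N_0=n$, so the minimal number of generators of $\overline{I_{\hat{\mathfrak{g}}}(k,0)}$ exceeds $n$ and the ideal cannot be generated by a regular sequence. Combined with the previous paragraph, this proves the equivalence.

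The only genuinely nontrivial step is the strict inequality $N_k>\dim_\cc\mathfrak g$ for $k\ge1$, and the cleanest route is the term-by-term comparison of the Weyl dimension products above, singling out the factor $\alpha=\theta$; this works uniformly for every simple $\mathfrak g$. Everything else — identifying $N_k$ as the minimal number of generators via Lemma~\ref{lemma:affine_min_gen}, and bounding the length of a regular sequence by the Krull dimension of $\on{Sym}(\mathfrak g)$ — is standard commutative algebra. I note that this argument does not require invoking the $C_2$-cofiniteness of $L_{\hat{\mathfrak{g}}}(k,0)$: the comparison of the generator count against the ambient dimension already settles the question.
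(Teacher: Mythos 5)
Your proposal is correct and follows essentially the same route as the paper's proof: identify the minimal number of generators of $\overline{I_{\hat{\mathfrak{g}}}(k,0)}$ as $N_k=\dim_\cc L((k+1)\theta)$ via Lemma~\ref{lemma:affine_min_gen}, show $N_k>N_0=\dim_\cc\mathfrak g$ for $k\ge 1$ from the Weyl dimension formula so no regular sequence can generate the ideal, and exhibit the basis of $\mathfrak g$ as a regular sequence when $k=0$. The only (harmless) variations are in bookkeeping: the paper bounds the size of any generating set via the projection $p_{k+1}$ onto $\on{Sym}^{k+1}(\mathfrak g)$ and deduces $N_k>N_0$ from $N_k$ being a strictly increasing polynomial with positive coefficients, whereas you use graded Nakayama plus the Krull-dimension bound on regular sequences and a factor-by-factor comparison of the Weyl products.
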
 

\begin{proof}
\delete{For any $k \geq 0$,  $U(\mathfrak{g}) \overline{e_\theta(-1)^{k+1}\mathbf{1}}$ is the irreducible highest weight $\mathfrak{g}$-module $L((k+1)\theta)$ of highest weight $(k+1)\theta$, which is dominant. In particular, for $k=0$, we have $L(\theta)= U(\mathfrak{g}) \overline{e_\theta(-1)\mathbf{1}}=\mathfrak{g}(-1)\overline{\mathbf{1}} \cong \mathfrak{g}$ as adjoint module.} 
\delete{
Using Weyl's character formula, we obtain
\begin{equation} \label{eq:Nk-formula}
N_k=\prod_{\alpha \in \Phi^+}\frac{( (k+1)\theta+\rho, \alpha)}{\langle \rho, \alpha \rangle}=\prod_{\alpha \in \Phi^+}((k+1)\frac{( \theta, \alpha )}{( \rho, \alpha )}+1)
\end{equation}
with $\Phi^+$ the positive roots of $\mathfrak{g}$, and }

We note that  $N_0=\on{dim}_\cc \mathfrak{g}$ since $ \mathfrak g=L(\theta)$ as $ \mathfrak g$-module. Furthermore, since $(\theta, \theta ) >0$ and $\theta$ is a positive root, we know that $N_k$ is a non constant polynomial in $k$ with positive real coefficients. Thus $N_k$ is a strictly increasing function of $k$ on the integer values $k \geq 0$. Therefore when $k \geq 1$, we have $N_k > N_0=\on{dim}_\cc \mathfrak{g}$. 

By Lemma \ref{lemma:affine_min_gen}, the ideal $\langle U(\mathfrak g)\cdot e_\theta^{k+1}\rangle$ is generated by $N_k$ $\cc$-linearly independent elements of the same degree $ k+1$. Let $ p_{k+1}: \on{Sym}(\mathfrak g)\to \on{Sym}^{k+1}(\mathfrak g)$ be the linear projection map. Then, for any generating set $S$ of $\overline{I_{\hat{\mathfrak{g}}}(k,0)}$, $p_{k+1}(S)$  must  generate a $ N_k$-dimensional subspace in $ \on{Sym}^{k+1}(\mathfrak g)$.  Therefore $p_{k+1}(S)$  have  at least  $ N_k> \dim \mathfrak g$ many non-zero elements. Thus $S$ cannot be a regular sequence. Hence the inclusion $\widetilde{X}_{L_{\hat{\mathfrak{g}}}(k,0)} \subset \mathfrak{g}^*$ is not a complete intersection when $k \geq 1$. 

 \delete{Thus the number of minimal generators of the ideal of relations is greater than the number of variables, and so the ideal cannot be generated by a regular sequence.}
 
If $k=0$, then $U(\mathfrak{g}) \overline{e_\theta(-1)\mathbf{1}}=\mathfrak{g}(-1)\overline{\mathbf{1}}$, and thus $R(L_{\hat{\mathfrak{g}}}(0,0)) \cong \on{Sym}(\mathfrak{g}(-1)\overline{\mathbf{1}})/ \langle \mathfrak{g}(-1)\overline{\mathbf{1}} \rangle$. Therefore the inclusion $\widetilde{X}_{L_{\hat{\mathfrak{g}}}(0,0)} \subset \mathfrak{g}^*$ is a complete intersection.
\end{proof}

\begin{remark} By \eqref{eq:Nk-formula}, $ N_k$ is a polynomial of degree $|\{ \alpha\in \Phi^+\;|\; (\theta, \alpha)>0\}|$. Here $(\theta, \alpha)$ is the standard symmetric inner product. In the following, we give a list of the degrees for the simple root systems (we use Bourbaki's notation \cite{Bourbaki}): the linear factor corresponding to a positive root $\alpha $ has coefficient $\frac{(\theta, \alpha)}{(\rho, \alpha)}$. Note that writing $ \alpha=\sum_{i}n_i\alpha_i$ with respect to simple roots $\{\alpha_1, \dots, \alpha_l\}$, we have 
\[(\rho, \alpha) =\sum_{i}n_i(\rho, \alpha_i)=\sum_{i}\frac{n_i (\alpha_i, \alpha_i)}{2}\]
by noting $ (\rho, \alpha_i^\vee)=1$ and $\alpha^\vee=2\alpha/(\alpha, \alpha)$.
Similarly,  
\[(\theta, \alpha)=\sum_i n_i (\theta, \alpha_i).
 \]
 Thus one only needs the information $(\alpha_i, \alpha_j)$. We give below the degrees of $N_k$ as a polynomial in $k$ for simple Lie algebras:
 
 \begin{center}
 $ \renewcommand\arraystretch{1.2}\begin{array}{|c|c|}
 \hline
 \text{Type of } \mathfrak g & \text{Degree of }N_k \\
  \hline
 A_l  & 2l-1 \\
  \hline
B_l &  4l-5\\
 \hline
C_l & 2l-1 \\
 \hline
D_l & 4l-7 \\
 \hline
E_6 & 21 \\
 \hline
E_7 & 33 \\
 \hline
E_8 & 57 \\
 \hline
F_4 & 15 \\
 \hline
G_2 &5 \\
\hline
 \end{array}$
 \end{center}

 \delete{
 $A_l$:  $\theta=\varepsilon_1-\varepsilon_{l+1}$ and $\alpha=\varepsilon_i-\varepsilon_j$ with $ i<j$ such that $ (\theta, \alpha)>0$ if and only if $ i=1 $ or $ j=l+1$. Hence, $\deg_k(N_k)=(l(l+1)-(l-1)(l-2))/2=2l-1$. 
 
 $B_l$ ($l\geq 2$): $\theta=\varepsilon_1+\varepsilon_2$;  the positive roots  $\varepsilon_i $ with  $1\leq i\leq l$ and $\varepsilon_i\pm \varepsilon_j $ with $1\leq i<j\leq l$. Hence, $\deg_k(N_k)=3(l-2)+l-1+2=4l-5$. 
 
$C_l$ ($l\geq 2$): $\theta=2\varepsilon_1$. the  positive roots are $ 2\varepsilon_i $ with  $1\leq i\leq l$ and $\varepsilon_i\pm \varepsilon_j $ with $1\leq i<j\leq l$. Hence, $\deg_k(N_k)=2(l-1)+1=2l-1$. 
  
$D_l$ ($l\geq 3$):   $\theta=\varepsilon_1+\varepsilon_2$;  the positive roots $\varepsilon_i\pm \varepsilon_j $ with $1\leq i<j\leq l$. Hence, $\deg_k(N_k)= 4l-7$. 
  
$E_6$:  $\theta=\omega_2$ (the fundamental weight). Thus $ \deg_k(N_k)$ is the number of positive roots with simple root $\alpha_2$ as a component, i.e. $\deg_k(N_k)=21$.
  
$E_7$: $\theta=\omega_1$, so $\deg_k(N_k)=33$. In Bourbaki's list, there are 18 positive roots with a coefficient at least $2$ and with $\alpha_7$,  including 3 where the coefficient of $\alpha_1$ is 0. There are 8 from $E_6$ which give those without $\alpha_7$ but with $\alpha_1$. Furthermore, there are 10 positive roots with $\alpha_1$ and coefficients at most $1$. 

$E_8$: $\theta=\omega_8$, so $\deg_k(N_k)=57$. There are 47 in the list with a coefficient at least $2$ and they all contain $\alpha_8$. There are 10 positive roots with all coefficients at most 1 and with $\alpha_8$. Note that positive roots of $E_7$ have no impact since they do not contain $\alpha_8$. 

$F_4$: $\theta=\omega_1=\varepsilon_1+\varepsilon_2$, so $\deg_k(N_k)= 15$.

$G_2$: $\theta=\omega_2$, so  $\deg_k(N_k)= 5$.}

Here are the rank $2$ cases: 

$G_2$: $N_k=(k+2)(\frac{3}{4}(k+1)+1)(\frac{3}{5}(k+1)+1)(\frac{1}{2}(k+1)+1)(\frac{2}{3}(k+1)+1)$. 

\delete{{\color{red} I think it should be $N_k=(k+2)(\frac{1}{2}(k+1)+1)(\frac{1}{3}(k+1)+1)(\frac{1}{4}(k+1)+1)(\frac{2}{5}(k+1)+1)$. }}

$B_2$: $N_k=(2(k+1)+1)(k+2)(\frac{2}{3}(k+1)+1)$.

\delete{{\color{red} I think it should be $N_k=(2(k+1)+1)(k+2)(\frac{4}{3}(k+1)+1))$. }}

$A_2$:  $N_k=(k+2)^3$.
\end{remark}
\begin{remarks}
(1) We saw in Proposition~\ref{prop:Wang} that for the rational vertex operator algebra $L_{Vir}(c_{p, q},0)$, the $C_2$-algebra is a complete intersection ring. Furthermore, when $k \in \mathbb{Z}_+$, Proposition~\ref{R(V)completeintersection} states that the $C_2$-algebra of the rational affine vertex operator algebra $L_{\hat{\mathfrak{g}}}(k,0)$ is not a complete intersection. It is however a Cohen-Macaulay ring, because it is finite dimensional. There exists an intermediate class of rings between complete intersection and Cohen-Macaulay, namely Gorenstein rings. They are characterised through the injective dimensions of their localisations at prime ideals. An interesting question would be to determine if $R(L_{\hat{\mathfrak{g}}}(k,0))$ is a Gorenstein ring for $k \in \mathbb{Z}_+$. 

(2) Another question we have not addressed in this paper is the conditions for the Ext algebra $\on{Ext}_{R(V)}^*(\bb C, \bb C)$ to be finitely generated. For finite group schemes, this is a result of Friedlander-Suslin (\cite{Friedlander-Suslin}). This is important to ensure that the variety $X_{V}^!$ is of finite type. Clearly, when $ R(V)$ is locally complete intersection at the vertex point, then the Tate resolution we constructed in Section~\ref{sec:4.4} would be $X^2$ and thus the Yoneda algebra is finitely generated. But this is not necessary. As one can see from the construction of the Tate resolution, if $X=X^{n}$ for some $ n$ (we will say that such an ideal $I$ (or the closed subscheme $\on{Spec}(R/I)$ in $ \on{Spec}(R)$) is $n$-regular), then the Yoneda algebra is finitely generated. These questions will be discussed in a forthcoming paper.

(3)  We have only considered the simple affine vertex operator algebras $ L_{\hat{\mathfrak g}}(k,0)$ for positive integer levels.  For other levels $k$, such as  admissible levels, the $C_2$-algebras $R(L_{\hat{\mathfrak g}}(k, 0))$ and the associated varieties have been discussed (\cite{Arakawa2, Arakawa-Moreau}). It would be interesting to find approximations of the cohomological varieties. 

(4) There are other rational vertex operator algebras arising from affine rational vertex operator algebras through coset constructions such as parafermion (\cite{Arakawa-Lam-Yamada}), $\cal W$-algebras, and Monster moonshine modules (\cite{Frenkel-Lepowsky-Meurman}). These algebras are not generated by degree 1 elements, and we still do not know anything about their cohomological varieties. 
\end{remarks}

\begin{remark} It was pointed out by David Ridout shortly after the paper was posted on  ArXiv that $\deg(N_k)=2h^\vee-3$. Here $ h^\vee$ is the dual Coxeter number. This can be easily checked from the table in \cite[p.80]{Kac}. In fact, this follows from \cite[Prop.1]{Suter}.
\end{remark}

\setlength\bibitemsep{7pt}

\end{document}